\newtheorem{theorem}{Theorem}[section]
\newtheorem{lemma}[theorem]{Lemma}
\newtheorem{cor}[theorem]{Corollary}
\newtheorem{prop}[theorem]{Proposition}
\theoremstyle{definition}
\newtheorem{defn}[theorem]{Definition}
\newtheorem{example}[theorem]{Example}
\newtheorem{hypothesis}[theorem]{Hypothesis}
\newtheorem{remark}[theorem]{Remark}
\numberwithin{equation}{theorem}
\newcommand{\bB}{\mathbf{B}}
\newcommand{\bv}{\mathbf{v}}
\newcommand{\CC}{\mathbb{C}}
\newcommand{\Qp}{\mathbb{Q}_p}
\newcommand{\QQ}{\mathbb{Q}}
\newcommand{\RR}{\mathbb{R}}
\newcommand{\ZZ}{\mathbb{Z}}
\newcommand{\calF}{\mathcal{F}}
\newcommand{\calH}{\mathcal{H}}
\newcommand{\calO}{\mathcal{O}}
\newcommand{\calR}{\mathcal{R}}
\newcommand{\frakm}{\mathfrak{m}}
\newcommand{\frako}{\mathfrak{o}}
\newcommand{\dual}{\vee}
\DeclareMathOperator{\bd}{bd}
\DeclareMathOperator{\BPair}{\mathbf{BPair}}
\DeclareMathOperator{\Dfm}{\mathbf{Dfm}}
\DeclareMathOperator{\dR}{dR}
\DeclareMathOperator{\Gal}{Gal}
\DeclareMathOperator{\Hom}{Hom}
\DeclareMathOperator{\id}{id}
\DeclareMathOperator{\Ind}{Ind}
\DeclareMathOperator{\Iw}{Iw}
\DeclareMathOperator{\Maxspec}{Maxspec}
\DeclareMathOperator{\Cont}{Cont}
\DeclareMathOperator{\PhiGamma}{\mathbf{\Phi \Gamma}}
\DeclareMathOperator{\PhiGammabreve}{\breve{\mathbf{\Phi \Gamma}}}
\DeclareMathOperator{\PhiGammatilde}{\widetilde{\mathbf{\Phi \Gamma}}}
\DeclareMathOperator{\PhiMod}{\mathbf{\Phi Mod}}
\DeclareMathOperator{\Pic}{Pic}
\DeclareMathOperator{\Proj}{Proj}
\DeclareMathOperator{\rank}{rank}
\DeclareMathOperator{\Rep}{\mathbf{Rep}}
\DeclareMathOperator{\Res}{Res}
\DeclareMathOperator{\Spec}{Spec}
\DeclareMathOperator{\Trace}{Trace}
\DeclareMathOperator{\triv}{triv}
\DeclareMathOperator{\VB}{\mathbf{VB}}
\begin{document}

\title{On categories of $(\varphi, \Gamma)$-modules}
\author{Kiran S. Kedlaya and Jonathan Pottharst}
\thanks{Kedlaya was supported by NSF (grant DMS-1501214), UC San Diego (Warschawski Professorship), Guggenheim Fellowship (fall 2015). Thanks to Ofer Gabber for helpful feedback.}
\date{February 26, 2017}

\begin{abstract}
Let $K$ be a complete discretely valued field of mixed characteristics $(0,p)$ with perfect residue field. One of the central objects of study in $p$-adic Hodge theory is the cate\-gory of continuous representations of the absolute Galois group of $K$ on finite-dimensional $\QQ_p$-vector spaces. In recent years, it has become clear that this category can be studied more effectively by embedding it into the larger category of $(\varphi, \Gamma)$-modules; this larger category plays a role analogous to that played by the category of vector bundles on a compact Riemann surface in the Narasimhan-Seshadri theorem on unitary representations of the fundamental group of said surface. This category turns out to have a number of distinct natural descriptions, which on one hand suggests the naturality of the construction, but on the other hand forces one to use different descriptions for different applications. We provide several of these descriptions and indicate how to translate certain key constructions, which were originally given in the context of modules over power series rings, to the more modern context of perfectoid algebras and spaces.
\end{abstract}

\maketitle

Throughout, let $p$ be a prime number and let $K$ be a \emph{$p$-adic field}, by which we mean a complete discretely valued field of mixed characteristics $(0,p)$ and perfect residue field. For instance, $K$ may be a finite extension of $\QQ_p$, but we do not restrict to this case unless otherwise specified.

In $p$-adic Hodge theory, one studies the relationship between different cohomology theories associated to algebraic (and more recently analytic) varieties over $K$. For example, by analogy with the comparison between the Betti and de Rham cohomologies associated to a complex algebraic variety, one has a \emph{$p$-adic comparison isomorphism} comparing the $p$-adic \'etale and algebraic de Rham cohomologies of a variety over $K$. (This result has a long, rich, and continuing history which we do not wish to review here; see the introductions of \cite{scholze2} and \cite{bhatt-morrow-scholze} for up-to-date synopses.) 

Continuing with this analogy, just as one encapsulates the Betti--de Rham comparison isomorphism in the construction of a \emph{Hodge structure} associated to a complex algebraic variety, one would like to encode the $p$-adic comparison isomorphism into an object associated to a $K$-variety that ``remembers'' certain cohomology groups and their associated structures. One reason to do this is to study situations where one has putative instances of the comparison isomorphism corresponding to varieties which are expected to exist but not yet constructed; this situation occurs naturally in the study of Shimura varieties \cite{liu-zhu}.

A first approximation to such a package is Fontaine's definition of a
\emph{$(\varphi, \Gamma)$-module} \cite{fontaine-phigamma}. To a continuous representation of $G_K$ on a finite-dimensional $\QQ_p$-vector space, Fontaine associates a module over a certain commutative ring (described explicitly in terms of formal Laurent series; see \S\ref{sec:categories}) equipped with semilinear actions of certain endomorphisms of the base ring (the eponymous $\varphi$ and
$\Gamma$). The fact that the representation can be recovered from this module rests on two pivotal observations: first, one can describe representations of Galois groups of fields of characteristic $p$ on finitely generated $\ZZ_p$-modules in a manner reminiscent of Artin--Schreier theory,
as shown by Katz
\cite[4.1.1]{katz-modular};
second, the infinite cyclotomic extension $K(\mu_{p^\infty})$ has the same Galois group as the field of Laurent series over the residue field of $K(\mu_{p^\infty})$,
as shown by Fontaine--Wintenberger \cite{fontaine-wintenberger}.

Given a smooth proper $K$-variety, its $p$-adic \'etale cohomology admits a continuous $G_K$-action and thus may be fed into Fontaine's $(\varphi, \Gamma)$-module functor. Fontaine had already established
how to pass functorially from $p$-adic \'etale cohomology to de Rham cohomology, so in principle the same information is captured in the $(\varphi, \Gamma)$-module; however, there is no obvious way to convert the $(\varphi, \Gamma)$-module into de Rham cohomology without first passing back to \'etale cohomology.

This defect was subsequently remedied by work of Cherbonnier--Colmez \cite{cherbonnier-colmez} and Berger \cite{berger}, which shows that Fontaine's construction may be modified to use a different base ring in which formal Laurent series are replaced by rigid analytic functions on suitable annuli (again see \S\ref{sec:categories} for precise definitions).
This has the crucial advantage of making ``evaluation at a point'' make enough sense so that Fontaine's \'etale--de Rham construction factors through the $(\varphi, \Gamma)$-module. Among the various applications of this refinement, we single out one which will be relevant later in our story: one can use $(\varphi, \Gamma)$-modules in this sense to give explicit formulas in Iwasawa theory, such as for the Bloch--Kato exponential function and the Perrin-Riou reciprocity map \cite{berger-explicit}.

Here we arrive at the point of departure for this paper: the Iwasawa theory we just alluded to is nowadays retronymically called \emph{cyclotomic} Iwasawa theory, because similar considerations have subsequently been applied to towers of number fields whose Galois groups are various (not necessarily commutative) $p$-adic Lie groups.
However, the construction of $(\varphi, \Gamma)$-modules described above gives a privileged role to the cyclotomic towers, so it is not immediately evident that they can be used to study Iwasawa theory beyond the cyclotomic case. While some initial work in this direction does exist \cite{berger-fourquaux, berger-schneider-xie, schneider-venjakob}, our aim here is not to address this issue directly, but to instead work at a more foundational level: we describe $(\varphi, \Gamma)$-modules, and the constructions used in Berger's explicit formulas, in terms that do not distinguish the cyclotomic tower from other $p$-adic Lie towers.

Before saying more about this, it is important to emphasize the fact that the functor from 
$p$-adic representations of $G_K$ to $(\varphi, \Gamma)$-modules in the sense of Berger is only a full embedding, not an equivalence of categories.
Using work of the first author \cite{kedlaya-annals}, one may characterize the essential image of the functor (the so-called \emph{\'etale $(\varphi, \Gamma)$-modules}) as the \emph{semistable} objects for a suitable degree function; this immediately suggests an analogy with the Narasimhan--Seshadri theorem \cite{narasimhan-seshadri} identifying unitary representations of the fundamental group of a compact Riemann surface with polystable bundles.
The computation of (continuous) Galois cohomology of representations in terms of $(\varphi, \Gamma)$-modules, worked out in Fontaine's setting by Herr \cite{herr, herr-tate},
adapts to Berger's setting and extends to non-\'etale objects by work of R. Liu \cite{liu-herr}. These developments have led to several new applications of $p$-adic Hodge theory, such as the construction of a $p$-adic local Langlands correspondence for $\mathrm{GL}_2(\mathbb{Q}_p)$ with good interpolation properties
\cite{colmez-langlands} and the study of the geometry of eigenvarieties via triangulations
of $(\varphi, \Gamma)$-modules \cite{kpx, bergdall, breuil-hellmann-schraen, breuil-hellmann-schraen2}; this paper may have some relevance to such applications also, but we refrain from speculating on this point here.

A first step towards realizing our goal of getting the cyclotomic tower out of the theory of $(\varphi, \Gamma)$-modules is to describe the category in a more agnostic fashion.
We introduce two of these: one by Berger in the language of \emph{$B$-pairs} and one by Fargues--Fontaine in terms of vector bundles on certain one-dimensional noetherian schemes
(sometimes called \emph{Fargues--Fontaine curves}). 
The latter description arises very naturally within the geometric reinterpretation of $p$-adic Hodge theory in the language of \emph{perfectoid spaces}, as in the work of Scholze \cite{scholze1, scholze2, scholze-icm} and Kedlaya--Liu \cite{kedlaya-liu1, kedlaya-liu2}.

However, these descriptions alone do not suffice to expunge the cyclotomic tower from
the theory of $(\varphi, \Gamma)$-modules from the point of view of applications to Iwasawa theory. This is because Berger's explicit formulas depend crucially on a certain construction
involving reduced traces in the cyclotomic tower, which ultimately manifests as an operator on the power series used in the definition of the base ring of $(\varphi, \Gamma)$-modules (the $\psi$ map; see Definition~\ref{D:psi}). To eliminate this construction, we make crucial use of 
the theory of \emph{arithmetic deformations} of $(\varphi, \Gamma)$-modules, as developed with L. Xiao in \cite{kpx}; this allows us to exchange the explicit use of the cyclotomic tower in the classical construction of $(\varphi, \Gamma)$-modules for an arithmetic deformation parametrizing cyclotomic twists.
This makes it natural to consider other deformations corresponding to other $p$-adic Lie groups, including nonabelian ones. A natural next step would be to try to interpret results from any of \cite{berger-fourquaux, berger-schneider-xie, schneider-venjakob} in this framework, but we stop short of this point; 
see Remark~\ref{R:berger explicit} and \S\ref{sec:coda} for a brief discussion.

Before concluding this introduction, we set a few running notations. Our primary model for these and other notations is \cite{kpx}.
\setcounter{theorem}{0}
\begin{hypothesis}
Throughout this paper, as in this introduction, let $K$ be a complete discretely valued field of mixed characteristics with perfect residue field $k$ and absolute Galois group $G_K$; we do not assume $k$ is finite (i.e., that $K$ is a finite extension of $\QQ_p$) unless explicitly specified.
Put $F = W(k)[1/p]$ for $W(k)$ the ring of Witt vectors over $k$, so that $K/F$ is a finite totally ramified extension.
Let $A$ be an affinoid algebra over $\QQ_p$ in the sense of Tate, rather than the more expansive sense of Berkovich. 
\end{hypothesis}

\section{The original category of $(\varphi, \Gamma)$-modules}
\label{sec:categories}

We begin by describing the original construction of the category of $(\varphi, \Gamma)$-modules,
into which the category of continuous representations of $G_K$ on finite-dimensional $\QQ_p$-vector spaces embeds; this is most explicit in the case $K=F$.
In preparation for our later discussion, we escalate the level of generality to accommodate representations valued in affinoid algebras.

\begin{defn}
Let $\Rep_A(G_K)$ denote the category of continuous representations of $G_K$ on finite projective $A$-modules. With more work, it would be possible to consider also representations on arbitrary finitely generated $A$-modules; we will not attempt this here.
\end{defn}

\begin{defn}
Let $\calR^\infty_{F,A}$ be the ring of rigid analytic functions on the disc $\left| \pi \right| < 1$ over $F \widehat{\otimes}_{\QQ_p} A$. This ring is complete for the topology of uniform convergence on quasicompact subspaces (Fr\'echet topology). The ring admits a continuous endomorphism $\varphi$ defined by the formula
\begin{equation} \label{eq:phi formula}
\varphi \left( \sum_n c_n \pi^n \right) =  \sum_n \varphi_F(c_n) ((1 + \pi)^p-1)^n,
\end{equation}
where $\varphi_F$ denotes the $A$-linear extension of Witt vector Frobenius map on $F$.
The group $\Gamma = \ZZ_p^\times$ also admits a continuous action on $\calR^\infty_{F,A}$ defined by the formula
\begin{equation} \label{eq:gamma formula}
\gamma \left( \sum_n c_n \pi^n \right) = \sum_n c_n ((1 + \pi)^\gamma-1)^n \qquad (\gamma \in \Gamma)
\end{equation}
under the interpretation of $(1 + \pi)^\gamma$ as the binomial series 
\[
(1 + \pi)^\gamma = \sum_{n=0}^\infty \frac{\gamma(\gamma-1)\cdots (\gamma-n+1)}{n!} \pi^n.
\]
Note that the actions of $\varphi$ and $\Gamma$ commute.
\end{defn}

\begin{defn}
Let $\calR_{F,A}$ be the direct limit of the rings of rigid analytic functions on the annuli $* < \left| \pi \right| < 1$ over $F \widehat{\otimes}_{\QQ_p} A$. This ring is complete for the direct limit topology (in the category of locally convex topological $\QQ_p$-vector spaces) induced by the topologies of uniform convergence on quasicompact subspaces (the \emph{LF topology}). We extend the actions of $\varphi$ and $\Gamma$ on $\calR^\infty_{F,A}$ to continuous actions on $\calR_{F,A}$ using the same formulas \eqref{eq:phi formula}, \eqref{eq:gamma formula}.
Note that $\calR_{F,A}$ is connected whenever $A$ is.
\end{defn}

\begin{defn}
A \emph{$(\varphi, \Gamma)$-module} over $\calR_{F,A}$ is a finite projective $\calR_{F,A}$-module $M$ equipped with commuting semilinear actions of $\varphi$ and $\Gamma$ such that the action of $\Gamma$ is continuous for the LF topology. Here by a \emph{semilinear action} of $\varphi$ on a module $M$, we mean a semilinear map $\varphi_M: M \to M$ induced by an isomorphism $\varphi^* M \to M$; note that the isomorphism property does not ensure that $\varphi_M$ acts bijectively on $M$, as this already fails for $M = \calR_{F,A}$ with the standard $\varphi$-action.

Let $\PhiGamma_{F,A}$ denote the category of $(\varphi, \Gamma)$-modules over $\calR_{F,A}$, viewed as an exact tensor category with rank function $\rank_F: \Phi\Gamma_{F,A} \to \Cont(\Spec A,\ZZ)$ computing the rank of the underlying $\calR_{F,A}$-module.
\end{defn}

We will establish the following result in \S\ref{sec:alternate}.
\begin{theorem} \label{T:phi gamma embedding1}
There exists a full embedding $\Rep_A(G_F) \to \PhiGamma_{F,A}$.
\end{theorem}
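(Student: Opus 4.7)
The plan is to realize the embedding as the classical Fontaine construction, followed by Cherbonnier--Colmez overconvergence and base change to $\calR_F$, all upgraded to handle coefficients in an affinoid algebra $A$.

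For $A = \QQ_p$, I would start from Fontaine's equivalence between $\Rep_{\QQ_p}(G_F)$ and the category of étale $(\varphi, \Gamma)$-modules over $\mathbf{B}_F$ (a suitable $p$-adic completion of $F((\pi))$), given by
\[
V \longmapsto D(V) := (\mathbf{B} \otimes_{\QQ_p} V)^{H_F},
\]
where $H_F \subset G_F$ is the kernel of the projection onto $\Gal(F(\mu_{p^\infty})/F) = \Gamma$ and $\mathbf{B}$ is a Fontaine period ring with $\mathbf{B}^{H_F} = \mathbf{B}_F$. I would then apply the Cherbonnier--Colmez theorem to descend $D(V)$ to an étale $(\varphi,\Gamma)$-module $D^\dagger(V)$ over the overconvergent subring $\mathbf{B}_F^\dagger \subset \mathbf{B}_F$, and tensor with $\calR_F$ to land in $\PhiGamma_{F,\QQ_p}$. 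Fullness in this case follows from the inverse formula $V = (\mathbf{B} \otimes_{\mathbf{B}_F^\dagger} D^\dagger(V))^{\varphi=1}$ supplied by Fontaine's equivalence.

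For general affinoid $A$ and $V \in \Rep_A(G_F)$, define
\[
D_{\mathrm{rig},A}^\dagger(V) := (\calR_F \widehat{\otimes}_{\QQ_p} V)^{H_F},
\]
using the natural $\varphi$- and $\Gamma$-actions on $\calR_F$ together with the continuous diagonal $H_F$-action on $\calR_F \widehat{\otimes}_{\QQ_p} V$. Since the $A$-action on $V$ commutes with $G_F$, this object carries a natural $\calR_{F,A} = \calR_F \widehat{\otimes}_{\QQ_p} A$-module structure with commuting semilinear actions of $\varphi$ and $\Gamma$. To realize it as an object of $\PhiGamma_{F,A}$, I would verify finiteness and projectivity of the expected rank by reducing to the $\QQ_p$-coefficient case via flat base change: working locally in $\Spec A$, a trivialization $V \simeq A^n$ of the underlying $A$-module (not as a $G_F$-module) presents $V$ as a twist of a trivial representation, and the $\QQ_p$-coefficient theory applies fiberwise. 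Fullness for general $A$ then follows from the $\QQ_p$-coefficient case: an $\calR_{F,A}$-linear $(\varphi,\Gamma)$-equivariant morphism between the constructed modules is \emph{a fortiori} $\QQ_p$-linear and compatible with the $A$-actions, and the inverse formula above produces an $A$-linear $G_F$-equivariant morphism of representations recovering it.

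The main obstacle is the upgrade to affinoid coefficients. Extending Cherbonnier--Colmez overconvergence uniformly over $\Spec A$, and then verifying finiteness and projectivity (rather than merely abstract coherence) over the non-Noetherian Fréchet--Stein ring $\calR_{F,A}$, is nontrivial --- this is essentially the substance of the main technical result of \cite{kpx} in this setting. A subsidiary issue is checking that $\Gamma$ acts continuously for the LF topology on $D_{\mathrm{rig},A}^\dagger(V)$; this should follow from choosing a $G_F$-stable $\ZZ_p$-lattice in $V$ (which exists by continuity of the $G_F$-action on the finite projective $A$-module $V$ together with compactness of $G_F$) and tracking the induced integral structure through the fixed-point construction.
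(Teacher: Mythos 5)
Your route is genuinely different from the paper's. The paper proves this theorem in two lines by combining later results: the equivalence $\PhiGamma_{F,A} \simeq \PhiGammatilde_{F,A}$ of Theorem~\ref{T:tilde no tilde} and the full embedding $\Rep_A(G_L) \to \PhiMod_{L,A}$, $V \mapsto (V \otimes_A \tilde{\calR}_{\CC_L,A})^{G_L}$, of Theorem~\ref{T:perfect embedding}, with $\Gamma$-descent data supplied by Lemma~\ref{L:Witt module descend}. All the analysis is thereby pushed into the tilde/no-tilde comparison, and the imperfect ring $\calR_{F,A}$ appears only at the very end. You instead follow the classical path (Fontaine, then Cherbonnier--Colmez, then base change to the Robba ring, then the family-coefficient upgrade of Berger--Colmez and \cite{kpx}); this is how the result was first established and is a legitimate alternative, essentially because Theorem~\ref{T:tilde no tilde} restricted to \'etale objects \emph{is} the Cherbonnier--Colmez theorem. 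What the paper's route buys is independence from the cyclotomic-tower-specific decompletion until the last step; what yours buys is explicitness in the case $A=\QQ_p$.

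Two points in your general-$A$ argument need repair, however. First, the formula $D^{\dagger}_{\mathrm{rig},A}(V) = (\calR_F \widehat{\otimes}_{\QQ_p} V)^{H_F}$ is wrong as written: in this paper's notation $\calR_F$ carries only the actions of $\varphi$ and $\Gamma$, and $H_F$ acts trivially on it, so your ``diagonal'' invariants compute $\calR_F \widehat{\otimes}_{\QQ_p} V^{H_F}$, which is not the intended object. The invariants must be taken inside $\tilde{\calR}_{\CC_F,A} \otimes_A V$ (or inside Fontaine's bounded period ring followed by base change, as in your own $A=\QQ_p$ paragraph); the $H_F$-invariant ring there is the \emph{perfect} Robba ring $\tilde{\calR}_{L_F,A}$ of Remark~\ref{R:not connected2}, not $\calR_{F,A}$, and descending from the former to the latter is precisely the overconvergence/decompletion step --- it cannot be absorbed into the definition. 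Second, the proposed reduction of finiteness and projectivity to the $\QQ_p$-coefficient case ``fiberwise via a trivialization of the underlying $A$-module'' does not work: overconvergence of a family is not a pointwise statement, since one needs a radius of overconvergence and a generating set uniform over $\Maxspec(A)$, and gluing the fiberwise modules into a finite projective $\calR_{F,A}$-module is exactly the hard content of Berger--Colmez and \cite[\S 2]{kpx}. You acknowledge this, but the proposal then defers the entire substance of the theorem to that citation. (For comparison, the paper's own treatment of general $A$ in Theorem~\ref{T:tilde no tilde} is also only sketched, but the sketch names the actual mechanism: descent from $\tilde{\calR}^{[s,r]}_{L_F,A}$ to the dense subring $\breve{\calR}^{[s,r]}_{F,A}$ by successive approximation, plus a Schauder-basis reduction to $A=\QQ_p$ for full faithfulness.)
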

In the interim, let us see how this result can be used to define a corresponding category with $F$ replaced by $K$.

\begin{defn}
Let $\calR_{K,A} \in \PhiGamma_{F,A}$ be the object of rank $[K:F]$ corresponding to $\Ind^{G_F}_{G_K} \rho_{\triv}$ via Theorem~\ref{T:phi gamma embedding}. The canonical isomorphisms $\rho_{\triv} \otimes \rho_{\triv} \cong \rho_{\triv}^\dual \otimes \rho_{\triv} \cong \rho_{\triv}$
then correspond to an associative morphism $\mu_K: \calR_{K,A} \otimes \calR_{K,A} \to \calR_{K,A}$; this gives $\calR_{K,A}$ the structure of a finite flat $\calR_{F,A}$-algebra equipped with continuous actions of $\varphi$ and $\Gamma$.

Let $\PhiGamma_{K,A}$ be the category of pairs $(M, \mu)$ for which $M \in \PhiGamma_{F,A}$
and $\mu: \calR_{K,A} \otimes_{\calR_{F,A}} M \to M$ is a morphism which is associative with respect to $\mu_K$, i.e., the compositions
\begin{gather*}
\calR_{K,A} \otimes \calR_{K,A}  \otimes M \stackrel{\mu_K \otimes 1}{\to} M_K \otimes M \stackrel{\mu}{\to} M, \\
\calR_{K,A} \otimes \calR_{K,A} \otimes M \stackrel{1 \otimes \mu}{\to} \calR_{K,A} \otimes  M \stackrel{\mu}{\to} M
\end{gather*}
coincide. 
In other words, these are finite projective $\calR_{K,A}$-modules equipped with commuting semilinear continuous actions of $\varphi$ and $\Gamma$.
We again view $\PhiGamma_{K,A}$ as an exact tensor category with rank function $\rank_K = \rank_F / [K:F]$ computing the rank of the underlying $\calR_{K,A}$-module.

Let $K'$ be a finite extension of $K$. Define the induction functor
$\Ind: \PhiGamma_{K',A} \to \PhiGamma_{K,A}$ 
and the restriction functor $\Res: \PhiGamma_{K,A} \to \PhiGamma_{K',A}$
by restriction of scalars and extension of scalars, respectively, along the natural map $\calR_{K,A}  \to \calR_{K',A}$.
\end{defn}

We may then formally promote Theorem~\ref{T:phi gamma embedding1} as follows.
\begin{theorem} \label{T:phi gamma embedding}
There exists a full embedding $\Rep_A(G_K) \to \PhiGamma_{K,A}$ compatible with induction and restriction on both sides.
\end{theorem}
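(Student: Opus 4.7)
The plan is to bootstrap Theorem~\ref{T:phi gamma embedding1} to $G_K$-representations via descent along the finite flat morphism $\calR_{F,A} \to \calR_{K,A}$, which on the Galois side mirrors the inclusion $G_K \subseteq G_F$ of finite index $[K:F]$.

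First I would construct the functor. Given $V \in \Rep_A(G_K)$, form $W_V := \Ind^{G_F}_{G_K} V \in \Rep_A(G_F)$ (a finite induction since $[G_F:G_K] < \infty$) and apply Theorem~\ref{T:phi gamma embedding1} to obtain $M_V \in \PhiGamma_{F,A}$. To enrich $M_V$ to an object of $\PhiGamma_{K,A}$, observe that in $\Rep_A(G_F)$ the object $\Ind^{G_F}_{G_K} \rho_{\triv}$ is canonically a commutative $A$-algebra (pointwise multiplication of $G_K$-invariant functions $G_F \to A$, i.e., $A$-valued functions on $G_F/G_K$) and $W_V$ is canonically a module over it via pointwise multiplication. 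Because the embedding of Theorem~\ref{T:phi gamma embedding1} is tensor-functorial and sends $\Ind^{G_F}_{G_K} \rho_{\triv}$ to the algebra $(\calR_{K,A}, \mu_K)$ by the very definition of the latter, this module datum transports to a morphism $\mu_V \colon \calR_{K,A} \otimes_{\calR_{F,A}} M_V \to M_V$ associative over $\mu_K$, yielding $(M_V,\mu_V) \in \PhiGamma_{K,A}$.

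Next, full faithfulness. A morphism $(M_V,\mu_V) \to (M_{V'},\mu_{V'})$ in $\PhiGamma_{K,A}$ is by definition a morphism $M_V \to M_{V'}$ in $\PhiGamma_{F,A}$ intertwining the $\calR_{K,A}$-actions, which by Theorem~\ref{T:phi gamma embedding1} and the identification above corresponds to a $G_F$-equivariant map $W_V \to W_{V'}$ that is linear over the algebra $\Ind \rho_{\triv}$. Since $\Ind \rho_{\triv} \cong A^{G_F/G_K}$ is a finite product of copies of $A$ indexed by the finite $G_F$-set $G_F/G_K$, such a map respects the corresponding idempotent decomposition and is determined by its restriction to the summand at the trivial coset $eG_K$, which is a map $V \to V'$; restricting $G_F$-equivariance to the stabilizer of $eG_K$, which is $G_K$, shows this restriction is $G_K$-equivariant, and conversely any $G_K$-equivariant map arises in this way via $\Ind$. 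This is the promised Galois-descent equivalence, and it identifies the two Hom-sets. Compatibility with $\Ind$ and $\Res$ along a further finite extension $K'/K$ is then formal: on the Galois side it is transitivity of induction, on the $\PhiGamma$-side it is transitivity of (co)restriction of scalars along $\calR_{F,A} \to \calR_{K,A} \to \calR_{K',A}$.

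The main obstacle I anticipate is justifying the tensor-functoriality invoked in the construction step, namely that the embedding of Theorem~\ref{T:phi gamma embedding1} transports algebra and module objects correctly and that it sends $\Ind \rho_{\triv}$ to the concretely defined $(\calR_{K,A}, \mu_K)$ rather than merely to some abstractly isomorphic pair. In the affinoid-coefficient setting here this also requires maintaining continuity of the $\Gamma$-action for the LF topology throughout the descent, but once these monoidal compatibilities are in place, the remainder is bookkeeping around Galois descent.
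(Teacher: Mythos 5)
Your argument is exactly the ``formal promotion'' the paper intends: the preceding definition already builds $(\calR_{K,A},\mu_K)$ as the image of the algebra object $\Ind^{G_F}_{G_K}\rho_{\triv}$ and defines $\PhiGamma_{K,A}$ as modules over it, so your construction via induction to $G_F$ plus transport of the module structure, and the full-faithfulness via the idempotent decomposition of $A^{G_F/G_K}$, is the same route. The proposal is correct and matches the paper's approach.
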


\begin{defn}
The category $\PhiGamma_{K,A}$ admits duals, and hence internal Homs: the dual of $M \in \PhiGamma_{K,A}$ is the module-theoretic dual $M^\dual = \Hom_{\calR_{K,A}}(M, \calR_{K,A})$
with the actions of $\varphi, \Gamma$ constructed so that the canonical $\calR_{K,A}$-linear morphism $M^\dual \otimes_{\calR_{K,A}} M \to \calR_{K,A}$ is a morphism in $\PhiGamma_{K,A}$. (Note that the definition of the $\varphi$-action on $M^\dual$ depends on the fact that the action of $\varphi$ corresponds to an isomorphism $\varphi^*M \to M$, not just an arbitrary $\calR_{K,A}$-linear morphism.)
For $M$ corresponding to $V \in \Rep_A(G_K)$ via Theorem~\ref{T:phi gamma embedding}, $M^\dual$ corresponds to the contragredient representation $V^\dual$.

Let $\calR_{K, A}(1)$ denote the object of $\PhiGamma_{K, A}$ corresponding to the cyclotomic character $\chi$ in $\Rep_A(G_K)$ via Theorem~\ref{T:phi gamma embedding}.
Concretely, $\calR_{K,A}(1)$ can be written as the free module of rank 1 on a generator $\varepsilon$ satisfying
\[
\varphi(\varepsilon) = \varepsilon, \qquad \gamma(\varepsilon) = \chi(\gamma) \varepsilon \qquad (\gamma \in \Gamma).
\]
For $M \in \PhiGamma_{K,A}$, define the \emph{Cartier dual}
\[
M^* = M^\dual(1) = M^\dual \otimes_{\calR_{K,A}} \calR_{K,A}(1) \in \PhiGamma_{K,A};
\]
for $M$ corresponding to $V \in \Rep_A(G_K)$ via Theorem~\ref{T:phi gamma embedding}, $M^*$ corresponds to the Cartier dual of $V$ (i.e., the contragredient of $V$ twisted by the cyclotomic character).
\end{defn}

\begin{remark} \label{R:not connected}
The description of $\PhiGamma_{K,A}$ given above is consistent with \cite{kedlaya-new-phigamma} but not with most older references. The reason is that even if $\Maxspec A$ is connected, in general $\Maxspec \calR_{K,A}$ is not connected; it is more typical to replace it with one of its connected components, and to replace $\Gamma$ with the stabilizer of that component.
See Remark~\ref{R:not connected2} and \cite[Remark~2.2.12]{kedlaya-new-phigamma} for further discussion.
\end{remark}

\begin{remark}
The base ring in Fontaine's original theory of $(\varphi, \Gamma)$-modules was not the ring $\calR_{K, \QQ_p}$, but rather the completion of the subring of elements of $\calR_{K, \QQ_p}$
which are bounded (meaning equivalently that their coefficients or their values are bounded).
This ring cannot naturally be interpreted in terms of functions on a rigid analytic space.
\end{remark}

\section{Interlude on perfectoid fields}
\label{sec:perfectoid}

In preparation for giving alternate descriptions of the category $\PhiGamma_{K,A}$, we introduce the basic theory of \emph{perfectoid fields}, which subsumes the earlier theory of \emph{norm fields} on which the classical theory of $(\varphi, \Gamma)$-modules is built; 
we briefly discuss the relationship with the older theory in 
Remark~\ref{R:deeply ramified}, deferring to \cite{kedlaya-new-phigamma} for more historical discussion. In the process, we must do a bit of extra work in order to accommodate the coefficient ring $A$.

\begin{defn}
Let $L$ be a field containing $K$ which is complete with respect to a nonarchimedean absolute value, denoted $\left| \cdot \right|$. Let $\frako_L$ denote the valuation subring of $L$ (i.e., elements of norm at most $1$). We say $L$ is \emph{perfectoid} if $L$ is not discretely valued and the Frobenius map on $\frako_L/(p)$ is surjective.
\end{defn}

\begin{example}  \label{exa:cyclotomic}
Suppose that $K = F$ and let $L$ be the completion of $K(\mu_{p^\infty})$.
Then 
\begin{align*}
\frako_L &\cong (W(k)[\zeta_p, \zeta_{p^2}, \dots]/(1 + \zeta_p + \cdots + \zeta_p^{p-1}, \zeta_p - \zeta_{p^2}^p, \zeta_{p^2} - \zeta_{p^3}^p, \dots))^{\wedge}_{(p)} \\
\frako_L/(p) &\cong k[T_1, T_2, \dots]/(1+T_1 + \cdots + T_1^{p-1}, T_1 - T_2^p, T_2 - T_3^p, \dots),
\end{align*}
so $L$ is perfectoid; the same will hold for general $K$ by Theorem~\ref{T:perfectoid} below.
For some more general results that subsume this example, see Remark~\ref{R:deeply ramified} and Lemma~\ref{L:deeply ramified is perfectoid}.
\end{example}

\begin{hypothesis}
For the remainder of \S\ref{sec:perfectoid}, let $L$ be a perfectoid field.
\end{hypothesis}

\begin{theorem} \label{T:perfectoid}
Define the multiplicative monoids
\[
\frako_{L^{\flat}} = \varprojlim_{x \mapsto x^p} \frako_L, \qquad
L^{\flat} = \varprojlim_{x \mapsto x^p} L.
\]
\begin{enumerate}
\item[(a)]
There is a unique way to promote $\frako_{L^{\flat}}$ and $L^{\flat}$ to rings
in such a way that the map $\frako_{L^{\flat}} \to L^{\flat}$ and the composition $\frako_{L^{\flat}} \to \frako_L \to \frako_L/(p)$ become ring homomorphisms.
(The map $\frako_{L^{\flat}} \to \frako_L$ is multiplicative but not additive.)
\item[(b)]
The ring $L^{\flat}$ is a perfect field. In addition,
the function $L^{\flat} \to L \stackrel{\left| \cdot \right|}{\to} \RR$ is an absolute value with respect to which $L^{\flat}$ is complete with valuation subring $\frako_{L^{\flat}}$.

\item[(c)]
The field $L^\flat$ is also perfectoid.
\item[(d)]
Any finite extension of $L$, equipped with the unique extension of the absolute value, is again perfectoid.
\item[(e)]
The functor $L' \mapsto L^{\prime \flat}$ defines an equivalence of categories between finite extensions of $L$ and $L^{\flat}$, and thereby a canonical isomorphism $G_L \cong G_{L^{\flat}}$.
\end{enumerate}
\end{theorem}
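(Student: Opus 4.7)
My plan is to handle (a)--(c) by direct monoidal-to-ring bootstrapping, dispatch (d) via the deep-ramification framework referenced in Remark~\ref{R:deeply ramified}, and tackle the essential content of (e) via Fontaine's $\theta$-map together with the almost purity theorem, which I expect to be the main obstacle. For (a), I would exhibit a canonical isomorphism of multiplicative monoids
\[
\frako_{L^\flat} \;\xrightarrow{\;\sim\;}\; \varprojlim_{\mathrm{Frob}} \frako_L/(p)
\]
by sending $(\alpha_n)$ to $(\alpha_n \bmod p)$, with inverse sending $(\bar\alpha_n)$ to $(\lim_{m \to \infty} \widetilde\alpha_{n+m}^{\,p^m})_n$ for any choice of lifts; convergence and independence of the lifts come from the identity $a \equiv b \pmod{p^k} \Rightarrow a^p \equiv b^p \pmod{p^{k+1}}$ in the $\ZZ_p$-flat ring $\frako_L$. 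The right-hand side is an inverse limit of rings, so transporting its ring structure to $\frako_{L^\flat}$ yields a ring structure in which addition reads $(\alpha+\beta)_n = \lim_{m \to \infty}(\alpha_{n+m} + \beta_{n+m})^{p^m}$. Uniqueness is forced by the requirement that the $0$-th component projection $\frako_{L^\flat} \to \frako_L/(p)$ be a ring map, since this already determines the structure on every component via Frobenius.

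For (b), set $|\alpha| := |\alpha_0|$ and note $|\alpha_n| = |\alpha|^{1/p^n}$ from $\alpha_n^{p^n} = \alpha_0$. Multiplicativity is immediate; the ultrametric inequality follows by applying the ultrametric on $L$ at level $n$ inside the addition formula and passing to the limit. That $L^\flat$ is a field: any $\alpha \neq 0$ has all $\alpha_n$ nonzero (since $\alpha_n = \alpha_{n+1}^p$), so $(\alpha_n^{-1})_n \in L^\flat$ is a multiplicative inverse. Perfectness is immediate since Frobenius on $L^\flat$ is realized as the shift $(\alpha_n) \mapsto (\alpha_{n-1})$. For completeness, $\frako_{L^\flat}$ is a closed subset of the complete product $\prod_n \frako_L$ (each factor carrying its valuation topology), hence itself complete, whence $L^\flat = \frako_{L^\flat}[\varpi^{-1}]$ is complete for any pseudo-uniformizer $\varpi \in \frako_{L^\flat}$.

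Parts (c) and (d) are then quick. In (c), $L^\flat$ has characteristic $p$ and is perfect, so Frobenius is bijective on $\frako_{L^\flat}/(p) = \frako_{L^\flat}$, and $|L^\flat| = |L|$ remains non-discrete. In (d), a finite extension $L'/L$ has value group $|L|^{1/e}$, still non-discrete, and surjectivity of Frobenius on $\frako_{L'}/(p)$ amounts to the assertion that finite extensions of perfectoid fields remain perfectoid, which I would deduce from the deep-ramification property of $L$ recalled in Remark~\ref{R:deeply ramified} (following Gabber--Ramero and Scholze).

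For (e), the main obstacle, I would construct the untilting functor via Fontaine's ring $W(\frako_{L^\flat})$ and the canonical surjection $\theta: W(\frako_{L^\flat}) \to \frako_L$, whose kernel is principal and generated by a non-zero-divisor. Given a finite extension $M/L^\flat$, define $\frako_{M^\sharp} := W(\frako_M) \otimes_{W(\frako_{L^\flat}), \theta} \frako_L$ and $M^\sharp := \frako_{M^\sharp}[1/p]$. The nontrivial content is that $M^\sharp/L$ is a finite field extension of the same degree, that $(M^\sharp)^\flat \cong M$ canonically, and that every finite extension of $L$ arises this way; these together constitute the almost purity theorem (Faltings, Kedlaya--Liu, Scholze), asserting an equivalence under tilting between the categories of finite étale almost algebras over $\frako_L$ and over $\frako_{L^\flat}$ (in the sense of Gabber--Ramero). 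Granted this, passage to generic fibres gives the stated equivalence of finite extensions, and the identification $G_L \cong G_{L^\flat}$ follows formally by taking inverse limits over finite Galois subextensions of the separable closures on both sides.
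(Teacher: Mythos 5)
Your proposal is correct in outline, but it necessarily takes a different route from the paper, because the paper offers no argument at all for Theorem~\ref{T:perfectoid}: it simply cites \cite[\S 1]{kedlaya-new-phigamma} and the references therein. What you supply for (a)--(c) is the standard direct construction (the multiplicative identification of $\frako_{L^\flat}$ with $\varprojlim_{\mathrm{Frob}}\frako_L/(p)$, transport of the ring structure, the limit formula for addition, and the valuation $|\alpha|=|\alpha_0|$), and it is essentially complete at the level of a sketch; the only points you gloss are that the uniqueness argument needs all component maps $\frako_{L^\flat}\to\frako_L/(p)$ (not just the $0$-th) to be forced to be ring maps, and that the completeness argument identifies the subspace topology from $\prod_n\frako_L$ with the valuation topology --- both standard but worth a line each. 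For (d)--(e) you correctly isolate the genuine content as the almost purity theorem, which is exactly what the cited sources do; your untilting functor $M\mapsto W(\frako_M)\otimes_{W(\frako_{L^\flat}),\theta}\frako_L$ is the standard one.

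One caution on (d): as stated, your reduction is in danger of circularity if executed inside this paper. You propose to deduce that finite extensions of $L$ are perfectoid ``from the deep-ramification property of $L$''; but $L$ here is an abstract perfectoid field, not a priori the completion of a deeply ramified algebraic extension of a local field, and the direction ``perfectoid $\Rightarrow$ deeply ramified'' of Lemma~\ref{L:deeply ramified is perfectoid} is itself proved in the paper by applying Theorem~\ref{T:perfectoid}(d) to the finite extension $L'$. So you must cite the external almost purity statement (Gabber--Ramero, Faltings, Scholze, Kedlaya--Liu) directly for (d), or better, note that (d) is the rank-one case of the same theorem you invoke for (e). Also, the assertion that $|L'^{\times}|=|L^{\times}|^{1/e}$ is literally correct only for discrete valuations; in general one only knows $|L'^{\times}|\supseteq|L^{\times}|$ with finite index, which still suffices for non-discreteness.
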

\begin{proof}
See \cite[\S 1]{kedlaya-new-phigamma} and references therein.
\end{proof}

\begin{defn}
Define the field $L^\flat$ and equip it with an absolute value as per
Theorem~\ref{T:perfectoid}(b). 
For $r>0$, let $W^r(L^\flat)$ be the set of $x = \sum_{n=0}^\infty p^n [\overline{x}_n] \in W(L^\flat)$ such that $p^{-n} \left| \overline{x}_n \right|^r \to 0$ as $n \to \infty$.
By \cite[Proposition~5.1.2]{kedlaya-liu1}, this set is a subring of $W(L^\flat)$ on which 
the function $\left| \cdot \right|_r$ defined by
\[
\left| \sum_{n=0}^\infty p^n [\overline{x}_n]  \right|_r = \max_n \{p^{-n} \left| \overline{x}_n \right|^r\}
\]
is a complete multiplicative norm; this norm extends multiplicatively to $W^r(L^\flat)[p^{-1}]$.
For $0 < s \leq r$, let $\tilde{\calR}^{[s,r]}_L$ be the completion of $W^r(L^\flat)[p^{-1}]$
with respect to $\max\{\left| \cdot \right|_s, \left| \cdot \right|_r\}$,
and put $\tilde{\calR}^{[s,r]}_{L,A} = \tilde{\calR}^{[s,r]}_L \widehat{\otimes}_{\QQ_p} A$.
Let $\tilde{\calR}^r_{L,A}$ be the inverse limit of the $\tilde{\calR}^{[s,r]}_{L,A}$ over all $s \in (0,r)$, equipped with the Fr\'echet topology. Let $\tilde{\calR}_{L,A}$ be the direct  limit of the $\tilde{\calR}^r_{L,A}$ over all $r>0$, equipped with the locally convex direct limit topology (LF topology).

The notation is meant to suggest a strong analogy between (for example) the ring $\calR^{[s,r]}_{K,A}$ of power series convergent on a (relative) closed annulus and the somewhat more mysterious ring $\tilde{\calR}^{[s,r]}_{L,A}$. In fact, one may (somewhat imprecisely) think of the latter as consisting of certain ``Laurent series in $p$ with Teichm\"uller coefficients''; this point of view is pursued in \cite{kedlaya-witt} to express certain geometric consequences. In the case where $A$ is a field, a simultaneous development of
the ring-theoretic properties of $\calR^{[s,r]}_{K,A}$ and $\tilde{\calR}^{[s,r]}_{L,A}$
can be found in \cite{kedlaya-revisited}.
\end{defn}

For $A$ a field, the following is a consequence of \cite[Theorem~3.5.8]{kedlaya-liu2}.
\begin{lemma} \label{L:Witt module descend}
Let $L'$ be the completion of a (possibly infinite) Galois algebraic extension of $L$ with Galois group $G$. Then for $0 < s \leq r$, the functor from finite projective $\tilde{\calR}^{[s,r]}_{L,A}$-modules to finite projective
$\tilde{\calR}^{[s,r]}_{L',A}$-modules equipped with continuous semilinear $G$-actions is an equivalence of categories.
\end{lemma}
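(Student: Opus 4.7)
The plan is to deduce the lemma from its known case over a field (\cite[Theorem~3.5.8]{kedlaya-liu2}) by first upgrading to finite Galois extensions with affinoid coefficients, and then reducing the general (possibly infinite) Galois case to the finite one via a continuity argument.

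For a finite Galois subextension $L_0/L$ of $L'/L$ with Galois group $G_0$, the cited theorem identifies $\tilde{\calR}^{[s,r]}_{L,\QQ_p} \to \tilde{\calR}^{[s,r]}_{L_0,\QQ_p}$ as a faithfully flat $G_0$-Galois extension of Banach rings. Completed tensor with the flat $\QQ_p$-algebra $A$ preserves both faithful flatness and the $G_0$-action, so $\tilde{\calR}^{[s,r]}_{L,A} \to \tilde{\calR}^{[s,r]}_{L_0,A}$ is again faithfully flat $G_0$-Galois. Classical Galois descent then yields the equivalence in the finite case over $A$, with quasi-inverse $N \mapsto N^{G_0}$; full faithfulness in particular reduces to the identity $(\tilde{\calR}^{[s,r]}_{L_0,A})^{G_0} = \tilde{\calR}^{[s,r]}_{L,A}$, which is the field-case identity base-changed along $\QQ_p \to A$.

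For general Galois $L'/L$ with group $G$, I would show that any finite projective $\tilde{\calR}^{[s,r]}_{L',A}$-module $N$ with continuous semilinear $G$-action descends to some finite Galois subextension. Choose a finite generating set $e_1, \ldots, e_n$ of $N$; continuity of the $G$-action with respect to the Banach topology on $\tilde{\calR}^{[s,r]}_{L',A}$, together with the resulting Banach structure on $N$, forces the pointwise stabilizer of the $e_i$ (modulo a small neighborhood) to contain an open normal subgroup $H \leq G$. Taking $L_0$ to be the closure of $(L')^H$, the $\tilde{\calR}^{[s,r]}_{L_0,A}$-submodule $N_0$ of $N$ generated by the $e_i$ should be a finite projective descent of $N$. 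Applying the finite-Galois case above to $L_0/L$ with group $G/H$ and input $N_0$ completes the descent to $\tilde{\calR}^{[s,r]}_{L,A}$.

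The principal obstacle is the infinite-to-finite descent step: making the perturbation argument rigorous, and proving that $N_0$ is finite projective over $\tilde{\calR}^{[s,r]}_{L_0,A}$ with $N \cong N_0 \otimes_{\tilde{\calR}^{[s,r]}_{L_0,A}} \tilde{\calR}^{[s,r]}_{L',A}$, requires a careful topological analysis of continuous semilinear actions on finite projective modules over Banach algebras with affinoid coefficients. This is the essential new content beyond the field case of \cite{kedlaya-liu2}; the key input is an Ax--Sen--Tate-style compatibility of $H$-invariants with the completed base change $\widehat{\otimes}_{\QQ_p} A$, which should be tractable from the explicit Banach-module structure on $\tilde{\calR}^{[s,r]}_{L',A}$ and the continuity of the action.
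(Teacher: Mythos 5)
Your finite-Galois step is essentially fine: for $L_0/L$ finite Galois, $\tilde{\calR}^{[s,r]}_{L_0,A}$ is finite \'etale over $\tilde{\calR}^{[s,r]}_{L,A}$ with group $G_0$, classical Galois descent applies, and the invariants identity base-changes from $\QQ_p$ to $A$ (though the passage $\widehat{\otimes}_{\QQ_p} A$ preserving exactness/faithful flatness is not automatic for completed tensor products; it requires a Schauder basis argument, which is how the paper handles it). The genuine gap is your reduction of the infinite case to the finite case. Descending a finite projective $\tilde{\calR}^{[s,r]}_{L',A}$-module $N$ with continuous $G$-action to a finite subextension $L_0$ is descent along the \emph{infinite completed} extension $L'/L_0$, i.e., it is exactly the hard part of the lemma with $L$ renamed $L_0$; you have not reduced the problem, only relocated it. Moreover, the stabilizer argument you sketch does not work as stated: continuity gives only that $g(e_i)$ is \emph{close} to $e_i$ for $g$ in an open subgroup $H$, and approximate invariance of a generating set does not make the $\tilde{\calR}^{[s,r]}_{L_0,A}$-span $N_0$ stable under $H$, nor does it produce an actually $H$-invariant basis. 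Converting approximate invariance into exact invariance is a successive-approximation (Tate--Sen) argument requiring quantitative control on continuous $H^1$ of the coefficient ring --- precisely the analytic content you defer to "careful topological analysis." Note also that even for $H$ open, $H$ acts nontrivially on the coefficients $\tilde{\calR}^{[s,r]}_{L',A}$, so "pointwise stabilizer of the $e_i$" is not the right notion for a semilinear action.

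The paper avoids finite-level decompletion entirely. It encodes the $G$-action as a descent datum $\iota$ for the single ring map $\tilde{\calR}^{[s,r]}_{L,A} \to \tilde{\calR}^{[s,r]}_{L',A}$, uses the finiteness of the data defining $\iota$ only to reduce to $L'$ of countable degree over $L$, shows (from the proof of the Ax--Sen--Tate-type result \cite[Theorem~9.2.15]{kedlaya-liu1}, tensored with $A$) that $\tilde{\calR}^{[s,r]}_{L',A}$ splits as a Banach module over $\tilde{\calR}^{[s,r]}_{L,A}$, hence that the map is universally injective, and then invokes Joyal--Tierney descent for universally injective morphisms of Banach rings. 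Full faithfulness likewise rests on the invariants computation $(\tilde{\calR}^{[s,r]}_{L',A})^G = \tilde{\calR}^{[s,r]}_{L,A}$ for the completed infinite extension, not on finite Galois theory. If you want to salvage your route, you must either prove the Tate--Sen axioms for this tower with explicit constants or replace the finite-level reduction with a descent theorem valid for the completed extension directly.
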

\begin{proof}
We first check full faithfulness.
Let $M,N$ be two finite projective $\tilde{\calR}^{[s,r]}_{L,A}$-modules
and put $P = M^\dual \otimes N$. Let $M',N',P'$ be the respective base extensions of $M,N,P$ to $\tilde{\calR}^{[s,r]}_{L',A}$, equipped with the induced $G$-actions. 
We then have maps
\[
P \cong \Hom(M,N) \to \Hom_G(M',N') \cong (P')^G,
\]
so to check full faithfulness we need only check that $P \to (P')^G$ is an isomorphism.
By writing $P$ as a direct summand of a finite free module, this reduces immediately to checking that 
\[
(\tilde{\calR}^{[s,r]}_{L',A})^G = \tilde{\calR}^{[s,r]}_{L,A}.
\]
For $A = \QQ_p$, this equality is a consequence of \cite[Theorem~9.2.15]{kedlaya-liu1};
we may deduce the general case from this by constructing a Schauder basis for $A$ over $\QQ_p$, as in \cite[Proposition~2.7.2/3]{bgr} or \cite[Lemma~2.2.9(b)]{kedlaya-liu1}.

We next check essential surjectivity.
Let $M'$ be a finite projective $\tilde{\calR}^{[s,r]}_{L',A}$-module equipped with a continuous semilinear $G$-action. The $G$-action may then be described in terms of an isomorphism $\iota$ between the two base extensions of $M'$ to $\tilde{\calR}^{[s,r]}_{L',A} \widehat{\otimes}_{\tilde{\calR}^{[s,r]}_{L,A}} \tilde{\calR}^{[s,r]}_{L',A}$;
note that $\iota$ obeys a cocycle condition expressing the compatibility of the $G$-action with composition in the group $G$. Specifying $\iota$ involves only finitely elements of the ring $\tilde{\calR}^{[s,r]}_{L',A}$, so it may be realized over some subfield of $L'$ which is the completion of an algebraic extension of $L$ of at most countable degree; we may thus assume that $L'$ itself has this form. From the proof of \cite[Theorem~9.2.15]{kedlaya-liu1}, we see that $\tilde{\calR}^{[s,r]}_{L',\QQ_p}$ splits in the category of Banach modules over $\tilde{\calR}^{[s,r]}_{L,\QQ_p}$; by tensoring with $A$, we see that
$\tilde{\calR}^{[s,r]}_{L',A}$ splits in the category of Banach modules over $\tilde{\calR}^{[s,r]}_{L,A}$.
This means that $\tilde{\calR}^{[s,r]}_{L,A} \to \tilde{\calR}^{[s,r]}_{L',A}$ is a universally injective morphism in the category of Banach modules over $\tilde{\calR}^{[s,r]}_{L,A}$, so we may apply a general descent theorem of Joyal--Tierney \cite{joyal-tierney}
(compare \cite[Lemma~1.2.17]{kedlaya-liu2}) to descend $M'$ to a finite projective $\tilde{\calR}^{[s,r]}_{L,A}$-module $M$. (See also \cite[Tag~08WE]{stacks-project} for a more elementary treatment of the corresponding descent statement for ordinary modules over a ring,
whose proof may be emulated for Banach modules.)
\end{proof}

\begin{remark} \label{R:glueing lemma}
It is shown in \cite[Theorem~3.2]{kedlaya-noetherian} that the rings $\tilde{\calR}^{[s,r]}_{L,\QQ_p}$ are \emph{really strongly noetherian}, that is, any affinoid algebra over such a ring (even in the sense of Berkovich) is noetherian. In particular, the rings $\tilde{\calR}^{[s,r]}_{L,A}$ are really strongly noetherian; consequently, they satisfy the analogues of Tate's acyclicity theorem
\cite[Theorem~7.14, Theorem~8.3]{kedlaya-adic}
and Kiehl's theorem on the characterization of coherent sheaves
\cite[Theorem~8.16]{kedlaya-adic}.
\end{remark}

\begin{defn}
Denote by $\varphi$ the following maps induced by the Witt vector Frobenius map on $W(L^\flat)$:
\[
W^r(L^\flat) \to W^{r/p}(L^\flat), \qquad
\tilde{\calR}^{[s,r]}_{L,A} \to \tilde{\calR}^{[s/p,r/p]}_{L,A},
\qquad
\tilde{\calR}^{r}_{L,A} \to \tilde{\calR}^{r/p}_{L,A}, \qquad
\tilde{\calR}_{L,A} \to \tilde{\calR}_{L,A}.
\]
A \emph{$\varphi$-module} over $\tilde{\calR}_{L,A}$ is a finite projective $\tilde{\calR}_{L,A}$-module equipped with a semilinear $\varphi$-action; unlike for $(\varphi, \Gamma)$-modules, this action is necessarily bijective (because the same is true of the maps $\varphi$ displayed above).
Let $\PhiMod_{L,A}$ be the category of $\varphi$-modules over $\tilde{\calR}_{L,A}$.
\end{defn}

\begin{lemma} \label{L:phi invariants}
For any $r,s$ with $0 < s \leq r/p$, we have
\[
\ker(\varphi-1: \tilde{\calR}^{[s,r]}_{L,A} \to \tilde{\calR}^{[s,r/p]}_{L,A}) = A.
\]
In particular, we have $\tilde{\calR}_{L,A}^{\varphi} = A$.
\end{lemma}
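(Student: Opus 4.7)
The plan is to reduce to the case $A = \QQ_p$ via a Schauder basis, and then analyze $\varphi$-fixed elements directly using the Witt-vector structure.

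For the reduction, I would choose a Schauder basis $\{e_i\}_{i \in I}$ of $A$ over $\QQ_p$ (as in \cite[Proposition~2.7.2/3]{bgr}, used in the proof of Lemma~\ref{L:Witt module descend}). Every $x \in \tilde{\calR}^{[s,r]}_{L,A}$ then admits a unique convergent expansion $x = \sum_i x_i \otimes e_i$ with $x_i \in \tilde{\calR}^{[s,r]}_{L,\QQ_p}$. Since $\varphi$ acts $A$-linearly (via the identity on the $A$-factor), the equation $\varphi(x) = x$ is equivalent to $\varphi(x_i) = x_i$ for every $i$, so it suffices to prove the statement for $A = \QQ_p$.

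For $A = \QQ_p$, the identity $\varphi([\overline{u}]) = [\overline{u}^p]$ combined with $|[\overline{u}]|_t = |\overline{u}|^t$ yields the norm relation $|\varphi(y)|_t = |y|_{pt}$, so the hypothesis $\varphi(x) = x$ in $\tilde{\calR}^{[s,r/p]}_L$ gives $|x|_t = |x|_{pt}$ for all $t \in [s,r/p]$. Now $\log |x|_t$ is convex in $t$, being a supremum of affine functions of $t$ (as is visible on finite Teichm\"uller approximations), so this scaling periodicity forces $|x|_t$ to be constant on $[s,r]$: at any minimum $t_\ast$ the periodicity produces another minimum at $p t_\ast$ or $t_\ast/p$, and convexity then extends the minimum set to all of $[s,r]$. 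Using the gluing from Remark~\ref{R:glueing lemma} to splice $x \in \tilde{\calR}^{[s,r]}_L$ with $\varphi(x) \in \tilde{\calR}^{[s/p,r/p]}_L$ along their common restriction, and iterating, $x$ extends to an element of $\tilde{\calR}^r_L$.

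The Frobenius-invariant elements of $\tilde{\calR}^r_L$ with constant Gauss norm are exactly $\QQ_p$: after normalizing by a power of $p$ to reduce to $|x|_t = 1$, the Teichm\"uller coefficients of successive approximations of $x$ in $W(L^\flat)$ must satisfy $\overline{x}^p = \overline{x}$, hence lie in $\mathbb{F}_p \subset L^\flat$; and the Teichm\"uller lifts of $\mathbb{F}_p$ in $W(L^\flat)$ lie in $\ZZ_p$. Thus $x \in \ZZ_p$, and undoing the scaling gives $x \in \QQ_p$. The ``in particular'' statement then follows: any $\varphi$-fixed element of $\tilde{\calR}_{L,A}$ lies in some $\tilde{\calR}^{[s,r]}_{L,A}$ with $0 < s \leq r/p$, so the main case applies.

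The principal technical hurdle I expect is making the Teichm\"uller-style identification rigorous in the completed ring $\tilde{\calR}^r_L$, where literal unique Teichm\"uller expansions are not directly available; this should be addressable either by a careful successive-approximation argument controlled by the two norms $|\cdot|_s$ and $|\cdot|_r$, or by direct appeal to the structural results on Witt-vector period rings in \cite{kedlaya-liu1}.
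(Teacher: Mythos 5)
Your proposal matches the paper's proof: the paper reduces to $A=\QQ_p$ by exactly the same Schauder-basis argument and then simply cites \cite[Corollary~5.2.4]{kedlaya-liu1} for the base case. Your filled-in argument for $A=\QQ_p$ (use $|\varphi(y)|_t=|y|_{pt}$ to extend $x$ to $\tilde{\calR}^r_L$ with bounded norms as $t\to 0^+$, hence land in the bounded subring $W^r(L^\flat)[p^{-1}]$ where Teichm\"uller expansions exist, then read off $\overline{x}_n\in\mathbb{F}_p$) is the standard one and parallels the paper's own proof of Lemma~\ref{L:no subinvariants}.
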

\begin{proof}
Again using \cite[Lemma~2.2.9(b)]{kedlaya-liu1}, we reduce to the case $A = \QQ_p$,
for which see \cite[Corollary~5.2.4]{kedlaya-liu1}. 
\end{proof}

\begin{lemma} \label{L:phi modules bundles}
For any $r,s$ with $0 < s \leq r/p$, the following categories are canonically equivalent:
\begin{enumerate}
\item[(a)]
the category $\PhiMod_{L,A}$;
\item[(b)]
the category of finite projective $\tilde{\calR}^r_{L,A}$-modules $M$ equipped with isomorphisms 
\[
\varphi^* M  \cong 
M \otimes_{\tilde{\calR}^r_{L,A}} \tilde{\calR}^{r/p}_{L,A};
\]
\item[(c)]
the category of finite projective $\tilde{\calR}^{[s,r]}_{L,A}$-modules
$M$ equipped with isomorphisms 
\[
\varphi^* M \otimes_{\tilde{\calR}^{[s/p,r/p]}_{L,A}} \tilde{\calR}^{[s,r/p]}_{L,A} \cong 
M \otimes_{\tilde{\calR}^{[s,r]}_{L,A}} \tilde{\calR}^{[s,r/p]}_{L,A}.
\]
\end{enumerate}
\end{lemma}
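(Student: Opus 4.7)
My plan is to establish the equivalences as a chain (a) $\Leftrightarrow$ (b) $\Leftrightarrow$ (c), with the natural functors (a) $\to$ (b) $\to$ (c) given by base change along the structure maps $\tilde{\calR}^r_{L,A} \to \tilde{\calR}_{L,A}$ and $\tilde{\calR}^r_{L,A} \to \tilde{\calR}^{[s,r]}_{L,A}$. All the content lies in constructing the quasi-inverses, and the main work will be in the direction (c) $\to$ (b).

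For (c) $\to$ (b): given a pair $(M_0, \psi_0)$ with $M_0$ finite projective over $\tilde{\calR}^{[s,r]}_{L,A}$ and $\psi_0$ the claimed isomorphism on $\tilde{\calR}^{[s, r/p]}_{L,A}$, I first observe that the Frobenius pullback $\varphi^* M_0$ lives naturally as a finite projective module on the adjacent annulus $\tilde{\calR}^{[s/p, r/p]}_{L,A}$ (overlapping $[s, r]$ in $[s, r/p]$, nonempty because $s \leq r/p$). The two annuli form a Tate-style cover of $[s/p, r]$, and $\psi_0$ is the gluing cocycle. Invoking the Tate acyclicity and Kiehl-type globalization theorem for really strongly noetherian rings cited in Remark~\ref{R:glueing lemma}, I will produce a finite projective module $M_1$ on $\tilde{\calR}^{[s/p, r]}_{L,A}$ whose restrictions to the two pieces recover $\varphi^* M_0$ and $M_0$. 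Iterating the construction (at each stage applying $\varphi^*$ to the previously glued object and re-gluing with $M_0$ at the upper end) yields finite projective modules $M_n$ on $\tilde{\calR}^{[s/p^n, r]}_{L,A}$ forming a compatible inverse system, whose limit is the desired finite projective module over $\tilde{\calR}^r_{L,A} = \varprojlim_{s' \in (0,r)} \tilde{\calR}^{[s', r]}_{L,A}$. The required Frobenius isomorphism over $\tilde{\calR}^{r/p}_{L,A}$ is already built into the gluing at the first stage, since by construction $M_1$ restricts on $[s/p, r/p]$ to $\varphi^* M_0$.

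For (a) $\Leftrightarrow$ (b): the functor (b) $\to$ (a) is base change along $\tilde{\calR}^r_{L,A} \to \tilde{\calR}_{L,A}$, and the Frobenius isomorphism propagates directly. Conversely, a finite projective module over $\tilde{\calR}_{L,A}$ is finitely presented, and since $\tilde{\calR}_{L,A}$ is a filtered colimit of the $\tilde{\calR}^{r'}_{L,A}$, both the module and its Frobenius isomorphism descend to some finite level, yielding an object of (b) after possibly adjusting $r$. Compatibility of the two constructions in both directions is then routine.

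The main obstacle I anticipate is the finite-projective gluing in (c) $\to$ (b): I must ensure that the two-sheet gluing produces a finite projective (not merely finitely presented) module at each stage, and that the inverse limit over the exhaustion $\{[s/p^n, r]\}_n$ preserves finite projectivity in passing to the Fr\'echet completion $\tilde{\calR}^r_{L,A}$. Both points hinge critically on the really strongly noetherian property of the rings $\tilde{\calR}^{[s,r]}_{L,A}$ invoked in Remark~\ref{R:glueing lemma}, which supplies Tate acyclicity for the finite cover and the Kiehl-type identification of compatible coherent systems with genuine coherent sheaves; a minor secondary issue is the descent of finite projectivity along the filtered colimit in (a) $\to$ (b), which is standard but requires a direct check.
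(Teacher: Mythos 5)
Your proposal is correct and follows essentially the same route as the paper: the paper likewise runs the chain (a)--(b)--(c) via base change, treats the key step between (b) and (c) by the iterated Frobenius-pullback gluing of \cite[Proposition~2.2.7]{kpx} using the Tate/Kiehl input from Remark~\ref{R:glueing lemma}, exactly as you spell out. The one point you gloss over (``after possibly adjusting $r$'' in the direction (a) $\to$ (b)) is handled in the paper by the bijectivity of $\varphi$ on $\tilde{\calR}_{L,A}$ and on objects of $\PhiMod_{L,A}$, which lets you move a module descended to some radius $r'$ to the prescribed radius $r$ by pulling back along powers of $\varphi^{-1}$.
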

\begin{proof}
The functor from (b) to (a) is base extension from $\tilde{\calR}^r_{L,A}$ to $\tilde{\calR}_{L,A}$. The fact that it is an equivalence is an easy consequence of the bijectivity of the action of $\varphi$ on objects of $\PhiMod_{L,A}$.

The functor from (b) to (c) is base extension from $\tilde{\calR}^r_{L,A}$ to $\tilde{\calR}^{[s,r]}_{L,A}$. To prove that it is an equivalence, note first that (c) is formally equivalent to the same category with $r,s$ replaced by $r/p,s/p$. Using 
	Remark~\ref{R:glueing lemma}, we see additionally that (c) is equivalent to the same category no matter what values of $r,s$ are used. We may then check the equivalence between (b) and (c) by imitating the proof of \cite[Proposition~2.2.7]{kpx}.
\end{proof}

\begin{defn}
For $M \in \PhiMod_{L,A}$ and $n \in \ZZ$, define the twist $M(n)$ to have the same underlying module as $M$, but with the action of $\varphi$ multiplied by $p^{-n}$.
\end{defn}

\begin{lemma} \label{L:enough phi-invariants}
For $M \in \PhiMod_{L,A}$, there exists $n_0 \in \ZZ$ such that for all $n \geq n_0$,
$\varphi-1$ is surjective on $M(n)$ and its kernel generates $M(n)$ as a $\tilde{\calR}_{L,A}$-module.
\end{lemma}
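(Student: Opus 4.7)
The lemma is a $\varphi$-module analogue of Serre's theorem that sufficiently positive twists of a coherent sheaf yield a globally generated sheaf with vanishing higher cohomology; I would prove the two claims (surjectivity of $\varphi - 1$ on $M(n)$ and generation of $M(n)$ by its kernel) separately.

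For surjectivity, I would use Lemma~\ref{L:phi modules bundles}(c) to represent $M$ by a finite projective $\tilde\calR^{[s,r]}_{L,A}$-module $M_{[s,r]}$ equipped with a $\varphi$-structure, for some $0 < s \leq r/p$. The Frobenius $\varphi_M$ becomes a bounded semilinear operator between the relevant Banach modules, so its iterates satisfy $\|(p^{-n}\varphi_M)^k\| \leq C^k p^{-nk}$ for some constant $C$ independent of $n$ and $k$. For $n$ large enough that $Cp^{-n} < 1$, the series $w = -\sum_{k \geq 0}(p^{-n}\varphi_M)^k(v)$ converges absolutely in $M_{[s,r]}$ and solves $(\varphi_{M(n)} - 1)w = v$, giving surjectivity on $M_{[s,r]}$. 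Passing through the Fr\'echet/LF limits via Lemma~\ref{L:phi modules bundles}(a)--(b), this yields surjectivity on $M$ itself for all sufficiently large $n$; the bound on $n$ can be taken uniformly across intervals because the operator norm of $\varphi_M$ is controlled independently of $[s,r]$ by the shifting behavior $|\varphi(x)|_{r/p} = |x|_r$ of the Witt norms.

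For generation by the kernel, I would first reduce to the case where $L^\flat$ is algebraically closed, by applying Lemma~\ref{L:Witt module descend} to $L'$ the completion of $\overline{L}$: the map $\tilde\calR_{L,A} \to \tilde\calR_{L',A}$ is faithfully flat, so it suffices to produce a $G_L$-stable generating set of $\ker(\varphi - 1)$ on $M_{L'}(n)$. In that setting, I would invoke the slope decomposition theorem of Kedlaya--Liu (see \cite[Theorem~4.5.7]{kedlaya-liu1}) to write $M_{L'}$ as a successive extension of isoclinic $\varphi$-modules of slopes $\lambda_1 \leq \cdots \leq \lambda_d$. For $n > \lambda_d$, each isoclinic piece of $M_{L'}(n)$ has strictly negative slope, and explicit $\varphi$-eigenvectors built from period elements (for instance, powers of $t = \log[\varepsilon]$ for $\varepsilon$ a compatible system of $p$-power roots of unity in $\bar L^\flat$, multiplied by basis vectors of the isoclinic piece) furnish abundant $\varphi$-fixed vectors which generate. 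The main obstacle is the descent bookkeeping: one must check that these explicit fixed vectors, together with their $G_L$-translates, still span $M_{L'}(n)$ and assemble into a $G_L$-equivariant family. This is handled by combining faithful flatness with the observation that $\ker(\varphi - 1)$ on $M_{L'}(n)$ is a finitely generated $A$-module carrying a continuous $G_L$-action, allowing one to apply standard Galois descent together with the $H^1$-vanishing implicit in the surjectivity established in the first step.
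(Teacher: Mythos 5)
Your overall strategy (Serre-type vanishing plus global generation for large twists) is the right heuristic, and indeed the paper simply cites \cite[Proposition~6.2.2, Proposition~6.2.4]{kedlaya-liu1} and observes that those proofs carry over to general $A$. But both halves of your argument have genuine gaps. For surjectivity: first, the twist $M(n)$ multiplies $\varphi$ by $p^{-n}$, and $\left| p^{-n} \right|_r = p^{n}$, so your estimate $\|(p^{-n}\varphi_M)^k\| \leq C^k p^{-nk}$ has the exponent backwards --- increasing $n$ makes $p^{-n}\varphi_M$ \emph{larger}, and the relevant formal inverse of $p^{-n}\varphi_M - 1$ for $n \gg 0$ is $\sum_{k \geq 1}(p^n \varphi_M^{-1})^k$, built from $\varphi^{-1}$, not $\varphi$. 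Second, and more fundamentally, $\varphi_M$ is not an endomorphism of any Banach model: it carries $M^{[s,r]}$ to $M^{[s/p,r/p]}$, the intervals $[s/p^k, r/p^k]$ have empty intersection as $k \to \infty$, and so the iterates $(p^{-n}\varphi_M)^k(v)$ do not all lie in any single $\tilde{\calR}^{[s',r']}_{L,A}$- or $\tilde{\calR}^{r'}_{L,A}$-model; the series has nowhere to converge in the LF topology. The actual argument splits $v$ according to its Teichm\"uller components: the bounded part (in $W^r(L^\flat)[p^{-1}]$) is handled by the $\varphi^{-1}$-series, whose terms stay in a fixed $W^r(L^\flat)[p^{-1}]$ and decay like $p^{-nk}$, while components $[\overline{x}]$ with $\left|\overline{x}\right| < 1$ are handled by the forward series, where $p^{nk}\left|\overline{x}\right|^{tp^k} \to 0$ because the doubly exponential decay of $\left|\overline{x}\right|^{tp^k}$ beats $p^{nk}$. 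No crude operator-norm bound captures this.

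For generation by the kernel, the descent from $L'$ (with $L'^{\flat}$ algebraically closed) back to $L$ does not work as stated. Knowing that $M_{L'}(n)$ is generated by $M_{L'}(n)^{\varphi}$ does not imply that $M(n)$ is generated by $M(n)^{\varphi} = (M_{L'}(n)^{\varphi})^{G_L}$: a $G$-stable generating set can have trivial invariants (already $i\RR$ generates $\CC$ over $\CC$ and is stable under $\Gal(\CC/\RR)$, yet $(i\RR)^{\Gal(\CC/\RR)} = 0$). Lemma~\ref{L:Witt module descend} descends finite projective modules, not the infinite-dimensional $A$-module $M_{L'}(n)^{\varphi}$, and faithful flatness only tells you that it suffices to show $M(n)^{\varphi}$ generates $M_{L'}(n)$ over $\tilde{\calR}_{L',A}$ --- which is exactly the point at issue. (A smaller problem: sections of the form $t^m \cdot \be$ all vanish along $Z_{L',A}$, so they cannot generate the fiber there; one needs the full, infinite-dimensional space of $\varphi$-invariants.) The proof you should emulate works directly over $L$: fix module generators $\be_1,\dots,\be_d$ of $M(n)$ and use the \emph{quantitative} form of the surjectivity from the first step (i.e., a bound on the solution $\bw$ of $(\varphi-1)\bw = \be_i - \varphi(\be_i)$ in $M(n)$) to replace each $\be_i$ by a nearby $\varphi$-invariant element; for $n$ large the perturbation is small enough that the new elements still generate. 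This is precisely the argument of \cite[Proposition~6.2.4]{kedlaya-liu1}, and it is why the two statements of the lemma are proved together rather than independently.
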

\begin{proof}
The case $A = \QQ_p$ is treated in \cite[Proposition~6.2.2, Proposition~6.2.4]{kedlaya-liu1}; the same proofs carry over to the general case.
\end{proof}

\begin{theorem} \label{T:perfect embedding}
Let $\CC_L$ be a completed algebraic closure of $L$. Then the formula 
\[
V \mapsto (V \otimes_A \tilde{\calR}_{\CC_L, A})^{G_L}
\]
defines a full embedding $\Rep_A(G_L) \to \PhiMod_{L,A}$.
(We will discuss the essential image of this functor in \S\ref{sec:slopes}.)
\end{theorem}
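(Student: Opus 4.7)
My approach is Galois descent on the coefficient ring, combined with a Hom-computation that invokes $\tilde{\calR}_{\CC_L,A}^{\varphi=1} = A$.

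\emph{Step 1 (Galois descent).} Fix $0 < s \leq r/p$ and put $B = \tilde{\calR}^{[s,r]}_{\CC_L,A}$, $B' = \tilde{\calR}^{[s,r]}_{L,A}$. For $V \in \Rep_A(G_L)$, form the finite projective $B$-module $V \otimes_A B$ with its diagonal continuous semilinear $G_L$-action (continuous on $V$ by hypothesis and on $B$ via the action of $G_L$ on $\CC_L$). Since $\CC_L$ is the completion of $\overline{L}/L$, a Galois extension with group $G_L$, Lemma~\ref{L:Witt module descend} gives that $M^{[s,r]} := (V \otimes_A B)^{G_L}$ is finite projective over $B'$ and that the counit
\[
\theta_V \colon M^{[s,r]} \otimes_{B'} B \longrightarrow V \otimes_A B
\]
is an isomorphism. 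The Witt Frobenius $\tilde{\calR}^{[s,r]}_{\CC_L,A} \to \tilde{\calR}^{[s/p,r/p]}_{\CC_L,A}$ commutes with $G_L$, so descending by Lemma~\ref{L:Witt module descend} on the annulus $[s, r/p]$ equips $M^{[s,r]}$ with the isomorphism datum appearing in Lemma~\ref{L:phi modules bundles}(c), thereby yielding an object $M \in \PhiMod_{L,A}$. Passing $\theta_V$ through the inverse/direct limits of intervals produces a canonical $\varphi$- and $G_L$-equivariant isomorphism
\[
\Theta_V \colon M \otimes_{\tilde{\calR}_{L,A}} \tilde{\calR}_{\CC_L,A} \stackrel{\sim}{\longrightarrow} V \otimes_A \tilde{\calR}_{\CC_L,A},
\]
functorial in $V$, which defines the desired functor.

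\emph{Step 2 (Full faithfulness).} For $V_1, V_2$ corresponding to $M_1, M_2$, finite projectivity of $M_i$ over $\tilde{\calR}_{L,A}$ and of $V_i$ over $A$, combined with $\Theta_{V_1}$ and $\Theta_{V_2}$, yields a canonical identification
\[
\Hom_{\tilde{\calR}_{L,A}}(M_1, M_2) \otimes_{\tilde{\calR}_{L,A}} \tilde{\calR}_{\CC_L,A} \cong \Hom_A(V_1, V_2) \otimes_A \tilde{\calR}_{\CC_L,A}
\]
compatibly with the $G_L$- and $\varphi$-actions (on the left, $G_L$ acts only through the right factor; on the right, $G_L$ acts diagonally; on each side, $\varphi$ acts only through the right factor). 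Taking $G_L$-invariants via Lemma~\ref{L:Witt module descend} (applied in the limit) recovers $\Hom_{\tilde{\calR}_{L,A}}(M_1, M_2)$ on the left, while imposing $\varphi = 1$ on the right and applying Lemma~\ref{L:phi invariants} to the perfectoid field $\CC_L$ gives
\[
\Hom_{\PhiMod_{L,A}}(M_1, M_2) \cong \bigl(\Hom_A(V_1, V_2) \otimes_A A\bigr)^{G_L} = \Hom_{\Rep_A(G_L)}(V_1, V_2).
\]

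The main obstacle is compatibility of the Galois descent with both the $\varphi$-structure and the inverse/direct limits that define $\tilde{\calR}_{L,A}$: the local descents at varying intervals must patch coherently, and $\Theta_V$ must survive the limit procedure. Lemma~\ref{L:phi modules bundles} supplies the glueing framework, so the argument ultimately reduces to careful bookkeeping between the base ring topologies and the descent functor.
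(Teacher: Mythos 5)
Your proposal is correct and follows essentially the same route as the paper: the object is constructed by Galois descent via Lemma~\ref{L:Witt module descend} and glued into $\PhiMod_{L,A}$ via Lemma~\ref{L:phi modules bundles}, and full faithfulness comes from the canonical $(\varphi,G_L)$-equivariant identification of $V\otimes_A\tilde{\calR}_{\CC_L,A}$ with $M\otimes_{\tilde{\calR}_{L,A}}\tilde{\calR}_{\CC_L,A}$ together with Lemma~\ref{L:phi invariants}. Your Step~2 merely spells out, as an internal-Hom and commuting-invariants computation, what the paper leaves implicit after recovering $V$ as the $\varphi$-invariants.
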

\begin{proof}
The target of this functor is in $\PhiMod_{L,A}$ thanks to 
Lemma~\ref{L:Witt module descend} and Lemma~\ref{L:phi modules bundles}.
If $V \in \Rep_A(G_L)$ corresponds to $M \in \PhiMod_{L,A}$, 
then by Lemma~\ref{L:Witt module descend} there is a canonical $(\varphi, G_L)$-equivariant isomorphism
\[
V \otimes_A \tilde{\calR}_{\CC_L, A} \cong M \otimes_{\tilde{\calR}_{L,A}} \tilde{\calR}_{\CC_L,A}.
\]
By Lemma~\ref{L:phi invariants}, we may take $\varphi$-invariants to obtain an isomorphism
\[
V \cong (M \otimes_{\tilde{\calR}_{L,A}} \tilde{\calR}_{\CC_L,A})^{\varphi};
\]
from this we see that the functor $V \mapsto M$ is fully faithful.
\end{proof}

\begin{defn} \label{D:theta}
There is a canonical surjection $\theta: W(\frako_{L^\flat}) \to \frako_L$ whose kernel is a principal ideal; see
\cite[\S 1]{kedlaya-new-phigamma} for the construction.
By \cite[Lemma~5.5.5]{kedlaya-liu1}, for any interval $[s,r]$ containing 1,
this map extends to a surjection
$\theta: \tilde{\calR}^{[s,r]}_{L,A} \to L \widehat{\otimes}_{\QQ_p} A$.
\end{defn}

We now introduce a geometric construction developed in great detail by Fargues and Fontaine
\cite{fargues-fontaine}; see \cite{fargues, fargues-fontaine-durham} for expository treatments.
\begin{defn}
Define the graded ring
\[
P_{L,A} = \bigoplus_{n=0}^\infty P_{L,A,n}, \qquad P_{L,A,n} = \tilde{\calR}_{L,A}^{\varphi=p^n},
\]
and put $X_{L,A} = \Proj(P_{L,A})$.
Let $\VB_{L,A}$ be the category of quasicoherent locally finite free sheaves (or for short \emph{vector bundles}) on $X_{L,A}$.
\end{defn}

\begin{example}
For any $x$ in the maximal ideal of $\frako_{L^{\flat}}$, the sum $\sum_{n \in \ZZ} p^{-n} [\overline{x}^{p^n}]$
converges to a nonzero element of $P_{L,\QQ_p,1}$.
\end{example}

\begin{defn} \label{D:B-pairs}
The map $\theta$ defines a closed immersion $\Spec(L \widehat{\otimes}_{\QQ_p} A) \to X_{L,A}$; let $Z_{L,A}$ be the resulting closed subscheme of $X_{L,A}$, and let $U_{L,A}$ be the complement of $Z_{L,A}$ in $X_{L,A}$.
By \cite[Lemma~8.9.3]{kedlaya-liu1}, $U_{L,A}$ is affine and $Z_{L,A}$ is contained in an open affine subspace of $X_{L,A}$; consequently, we may complete $X_{L,A}$ along $Z_{L,A}$ to get another affine scheme $\widehat{Z}_{L,A}$. Let $\bB_{e,L,A}, \bB^+_{\dR,L,A}, \bB_{\dR,L,A}$ be the respective coordinate rings of the affine schemes 
\[
U_{L,A}, \widehat{Z}_{L,A}, U_{L,A} \times_{\Proj(P_{L,A})} \widehat{Z}_{L,A}.
\]
Let $\BPair_{L,A}$ be the category of glueing data for finite projective modules with respect to the diagram
\[
\bB_{e,L,A} \rightarrow \bB_{\dR,L,A} \leftarrow \bB_{\dR,L,A}^+.
\]
\end{defn}

\begin{theorem} \label{T:perfect equivalence}
The categories $\PhiMod_{L,A}$, $\BPair_{L,A}$, and $\VB_{L,A}$ are canonically equivalent.
\end{theorem}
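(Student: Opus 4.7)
The plan is to construct explicit functors realizing the three equivalences and to verify each one, reducing where possible to the coefficient-free case $A = \QQ_p$, which is the content of \cite{fargues-fontaine} (and re-developed in \cite[Chapter~8]{kedlaya-liu1}). All three categories describe ``bundle data'' on the Fargues--Fontaine curve $X_{L,A}$, phrased respectively in terms of $\varphi$-equivariance on a global ring, coherent locally free sheaves on the scheme, and gluing data on a two-piece cover adapted to the divisor cut out by $\theta$; the real content is identifying these three viewpoints in the presence of affinoid coefficients.

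For $\PhiMod_{L,A} \cong \VB_{L,A}$, I would send $M \in \PhiMod_{L,A}$ to the quasicoherent sheaf on $X_{L,A} = \Proj P_{L,A}$ associated to the graded $P_{L,A}$-module $\bigoplus_{n \geq 0} M^{\varphi = p^n}$. Lemma~\ref{L:enough phi-invariants} shows that for $n$ sufficiently large, the $\varphi$-invariants of $M(n)$ generate $M$, which implies that the resulting sheaf is locally free of the expected rank on a cover by principal opens $D_+(f)$ for suitable $f \in P_{L,A,n}$. In the reverse direction, one recovers $M$ as the finite projective $\tilde{\calR}_{L,A}$-module of sections on this cover, with $\varphi$-action induced by the natural $\varphi$ on the ring; that this actually yields a $\varphi$-module (rather than mere annulus-wise data) uses Lemma~\ref{L:phi modules bundles} to pass between the $\tilde{\calR}_{L,A}$-, $\tilde{\calR}^{r}_{L,A}$-, and $\tilde{\calR}^{[s,r]}_{L,A}$-level descriptions.

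For $\VB_{L,A} \cong \BPair_{L,A}$, I would apply Beauville--Laszlo-type gluing along the closed immersion $Z_{L,A} \hookrightarrow X_{L,A}$. By Definition~\ref{D:B-pairs}, $U_{L,A}$ is affine, $Z_{L,A}$ is contained in an open affine, the formal completion $\widehat{Z}_{L,A}$ is affine with coordinate ring $\bB_{\dR,L,A}^+$, and their fiber product over $X_{L,A}$ is the spectrum of $\bB_{\dR,L,A}$. A vector bundle on $X_{L,A}$ then restricts to a finite projective $\bB_{e,L,A}$-module and to a finite projective $\bB_{\dR,L,A}^+$-module together with a canonical isomorphism after base change to $\bB_{\dR,L,A}$, which is precisely the data of an object of $\BPair_{L,A}$; conversely, since $Z_{L,A}$ is an effective Cartier divisor with principal ideal (Definition~\ref{D:theta}), Beauville--Laszlo gluing produces an object of $\VB_{L,A}$ from any such triple. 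Full faithfulness on global sections is anchored by Lemma~\ref{L:phi invariants}, which computes $H^0(X_{L,A}, \calO_{X_{L,A}}) = A$.

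The main obstacle is extending the well-understood case $A = \QQ_p$ to general affinoid $A$. All the constructions above commute with the completed tensor product $\widehat{\otimes}_{\QQ_p} A$ in an essentially formal way, but one must verify that essential surjectivity and exactness survive. As in Lemmas~\ref{L:Witt module descend} and~\ref{L:phi invariants}, the Schauder-basis argument of \cite[Lemma~2.2.9(b)]{kedlaya-liu1} combined with the really strong noetherianity recorded in Remark~\ref{R:glueing lemma} handles the reduction: exactness of $\varphi$-related complexes and descent of finite projective modules along $\QQ_p \to A$ both reduce to fiberwise statements over $\Maxspec A$, to which the classical Fargues--Fontaine classification applies, with Kiehl's theorem supplying the gluing of the resulting vector bundles over the affinoid base. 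Alternatively, one may invoke the $A$-coefficient version of the Fargues--Fontaine dictionary already developed in \cite[Chapter~8]{kedlaya-liu2} as a black box.
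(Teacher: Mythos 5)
Your proposal is correct and follows essentially the same route as the paper: the equivalence $\PhiMod_{L,A} \cong \VB_{L,A}$ via the graded module $\bigoplus_n M(n)^{\varphi} = \bigoplus_n M^{\varphi = p^n}$ over $P_{L,A}$ (using Lemma~\ref{L:enough phi-invariants} for local freeness and Lemma~\ref{L:phi modules bundles} to glue the $\varphi$-module structure), and the equivalence $\VB_{L,A} \cong \BPair_{L,A}$ via Beauville--Laszlo gluing along $Z_{L,A}$. Your additional remarks on the Schauder-basis reduction to $A = \QQ_p$ are consistent with how the paper handles coefficients elsewhere.
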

\begin{proof}
The categories $\BPair_{L,A}$ and $\VB_{L,A}$ are equivalent by the Beauville-Laszlo theorem
\cite{beauville-laszlo} applied to the coordinate ring of some open affine subscheme of $X_{L,A}$ containing $Z_{L,A}$.
We construct the functor from $\VB_{L,A}$ to $\PhiMod_{L,A}$ as 
as in \cite[Definition~6.3.10]{kedlaya-liu1}.
Choose $\calF \in \VB_{L,A}$.
For each $f \in P_{L,A}$ which is homogeneous of positive degree,
we have an open affine subscheme of $X_{L,A}$ with coordinate ring $\tilde{\calR}_{L,A}[f^{-1}]^\varphi$; we may thus take sections of $\calF$ to obtain a finite projective module over  this ring. By base extension, we obtain a finite projective module over $\tilde{\calR}_{L,A}[f^{-1}]$ equipped with a semilinear $\varphi$-action. By \cite[Lemma~6.3.7]{kedlaya-liu1}, the possible values of $f$ generate the unit ideal in $\tilde{\calR}_{L,A}$, so we may glue on $\Spec(\tilde{\calR}_{L,A})$ to obtain an object of $\PhiMod_{L,A}$.

In the other direction, for $M \in \PhiMod_{L,A}$, we may view
$\bigoplus_{n=0}^\infty M(n)^{\varphi}$ as a graded module over $P_{L,A}$, and then form the associated quasicoherent sheaf, which we must show is a vector bundle.
Using Lemma~\ref{L:enough phi-invariants}, this follows as in the proof of \cite[Theorem~6.3.12]{kedlaya-liu1}.
\end{proof}

\begin{remark}
Throughout this remark, assume that $A$ is a field.
In this setting, the concept of $B$-pairs was introduced by Berger \cite{berger-b-pairs}
in a purely algebraic fashion,
without reference to the schemes defined in Definition~\ref{D:B-pairs}. Therein,
the ring $\bB_{e,L,A}$ appears in connection with Fontaine's crystalline period ring
$\bB_{\mathrm{crys}}$.

The scheme $X_{L,A}$ introduced by Fargues--Fontaine is in some sense a ``complete curve'': in particular, it is a regular one-dimensional noetherian scheme.
The space $P_{L,A,n}$ constitutes the sections of the $n$-th power of a certain ample line bundle on this scheme. This scheme admits something resembling an analytification in the category of adic spaces, in that there is a morphism into it from an adic space built out of the rings $\tilde{\calR}^{[s,r]}_{L,A}$, the pullback along which induces an equivalence of categories of coherent sheaves by analogy with Serre's GAGA theorem in complex algebraic geometry. This adic space in turn admits an infinite cyclic \'etale cover which is
a ``quasi-Stein space'' whose global sections are the ring $\bigcap_{r>0} \tilde{\calR}^r_{L,A}$, on which the deck transformations act via the powers of $\varphi$.
For more on this story, see the aforementioned references such as \cite{fargues-fontaine},
and also \cite[\S 8.7, 8.8]{kedlaya-liu1} and \cite[\S 4.7]{kedlaya-liu2}.
\end{remark}

\begin{remark} \label{R:deeply ramified}
Let $F/K$ be an algebraic extension. In the case where $F = \QQ_p(\mu_{p^\infty})$, we have seen already (Example~\ref{exa:cyclotomic}) that the completion of $F$ is a perfectoid field. 
This property turns to be closely related to ramification of local fields; let us now recall the precise nature of this relationship.

Coates--Greenberg \cite{coates-greenberg} define $F/K$ to be
\emph{deeply ramified} if for every finite extension $F'$ of $F$, the trace map
$\Trace: \frakm_{F'} \to \frakm_F$ is surjective. This holds in particular if $F/K$ is 
\emph{arithmetically profinite} in the sense of Fontaine--Wintenberger
\cite{fontaine-wintenberger} (see \cite[Corollary~1.5]{fesenko} for a detailed proof);
the latter holds in turn if $F/K$ is an infinite Galois extension with finite residual extension whose Galois group is a $p$-adic Lie group, by a theorem of Sen \cite{sen-lie}.
\end{remark}

\begin{lemma} \label{L:deeply ramified is perfectoid}
Let $F/K$ be an algebraic extension with completion $L$. Then $F/K$ is deeply ramified if and only if $L$ is a perfectoid field.
\end{lemma}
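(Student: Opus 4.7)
The plan is to prove each direction separately; the key tools are the almost purity theorem in one direction and an explicit analysis of ramification in the other.

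For $(\Leftarrow)$, suppose $L$ is perfectoid, and let $F'/F$ be a finite extension with completion $L'$. By Theorem~\ref{T:perfectoid}(d), $L'$ is again perfectoid. Almost purity for perfectoid fields (which underlies Theorem~\ref{T:perfectoid}(e); compare \cite[\S 1]{kedlaya-new-phigamma}) implies that $\Trace_{L'/L}\colon\frako_{L'}\to\frako_L$ is almost surjective, in that its cokernel is annihilated by $\frakm_L$. Using that the value group of $L$ is dense, I would factor each $x\in\frakm_L$ as $x=\epsilon_1\epsilon_2$ with $\epsilon_i\in\frakm_L$, then pick $z\in\frako_{L'}$ with $\Trace(z)=\epsilon_1$ and form $y=\epsilon_2 z\in\frakm_{L'}$, for which $\Trace(y)=x$. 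The descent from $(L,L')$ back to $(F,F')$ then follows by approximating $y$ by an element $y_0\in\frakm_{F'}$ (using density of $F'$ in $L'$) and iteratively correcting the error $\Trace(y_0)-x$, exploiting the $F$-linearity of the trace and a fixed basis.

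For $(\Rightarrow)$, suppose $F/K$ is deeply ramified; I verify both axioms for $L$ to be perfectoid by contraposition. First, if $L$ were discretely valued with uniformizer $\pi$, then using density of $F$ in $L$ one may arrange $\pi\in F$. The extension $F'=F(\pi^{1/p})$ is totally ramified of degree $p$ (since $\pi^{1/p}\notin L$ by discreteness), and a direct computation with the basis $\{\pi^{i/p}:0\le i<p\}$ yields $\Trace_{F'/F}(\frakm_{F'})=p\frakm_F\subsetneq\frakm_F$ in mixed characteristic, contradicting deep ramification. Second, if Frobenius were not surjective on $\frako_L/(p)$, one would choose $a\in\frako_F$ whose class modulo $p$ lies outside the image of Frobenius and form the degree-$p$ extension $F'=F(a^{1/p})$; a computation of the different ideal $\mathfrak{d}_{F'/F}$ combined with trace surjectivity would force the class of $a$ modulo $p$ to be a $p$-th power, a contradiction. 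Both arguments follow the classical treatment in \cite[\S 2]{coates-greenberg}.

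The main technical obstacle is the descent step in the $(\Leftarrow)$ direction: the preimage $y$ is produced at the level of completions but lies a priori in $\frakm_{L'}$ rather than in $\frakm_{F'}$, and extracting an exact preimage in the uncompleted field requires a careful iterative approximation whose convergence depends on choosing a basis of $F'$ over $F$ with controlled norms.
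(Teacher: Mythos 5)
Your overall strategy matches the paper's: the reverse direction via almost purity for the completed fields, and the forward direction by contraposition, exhibiting a degree-$p$ extension whose trace fails to hit the maximal ideal. The main gap is in your Case 2 of the forward direction (Frobenius not surjective on $\frako_L/(p)$), which is the heart of the lemma: ``a computation of the different ideal'' is not routine here, because $L$ is not discretely valued, so $\frako_{F(a^{1/p})}$ need not be monogenic or even finite over $\frako_F$, and the different must then be accessed through the trace form itself --- which makes the argument circular as sketched. The paper replaces this with a self-contained norm estimate: after passing to a tamely closed $F$, it sets $c=\inf_y\left|x-y^p\right|$, chooses $y_0$ with $\left|x-y_0^p\right|$ close to $c$ and a scaling $\mu$, and shows that $u=(x^{1/p}-y_0)/\mu$ satisfies $\left|z-u\right|\geq\epsilon^{-2}\max\{1,\left|z\right|\}$ for all $z\in F$; this yields a lower bound on $\left|P(u)\right|$ against the Gauss norm of $P$ for $\deg P<p$, hence the upper bound $p^{-1/p}\epsilon^{2(p-1)}<1$ on $\left|\Trace(z)\right|$ for $z\in\frakm_{F(x^{1/p})}$, which (the value group being dense) gives non-surjectivity. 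You would need to supply this estimate or import its equivalent from Coates--Greenberg. On the plus side, your Case 1 ($L$ discretely valued, Eisenstein extension $F(\pi^{1/p})$, trace image $p\frakm_F$) is correct and is a case the paper's stated reduction passes over silently.

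In the reverse direction, your factorization $x=\epsilon_1\epsilon_2$ to upgrade almost surjectivity of $\Trace_{L'/L}$ to exact surjectivity onto $\frakm_L$ is exactly the paper's (implicit) step. But the descent to the uncompleted fields as you describe it does not close up: the infinite sum of corrections converges only in $L'$, not in $F'$. The standard fix avoids iteration: since all conjugates of $w\in F'$ have equal norm, $\left|\Trace(w)\right|\leq\left|w\right|$, so a single approximation $y_0\in\frakm_{F'}$ with $\left|y-y_0\right|<\left|x\right|$ gives $\left|\Trace(y_0)\right|=\left|x\right|$; as $\Trace(\frakm_{F'})$ is an $\frako_F$-submodule of $F$ containing an element of every positive valuation, it equals $\frakm_F$. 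Note finally that the paper does not invoke almost purity as a black box here: it transports the question through $\theta\colon W(\frako_{L^\flat})\to\frako_L$ to the tilts and proves the needed almost surjectivity there directly, observing that the annihilator of the cokernel of $\Trace\colon\frakm_{L'^{\flat}}\to\frakm_{L^\flat}$ is nonzero and stable under $p$-th roots.
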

\begin{proof}
Suppose first that $L$ is not perfectoid;
this means that there exists $x \in \frako_F$ whose image in $\frako_F/(p)$ is not in the image of Frobenius,
and we will show that $\Trace: \frakm_{F(x^{1/p})} \to \frakm_F$ is not surjective.
For this purpose, there is no harm in replacing $F$ with a tamely ramified extension; we may thus assume
at once that $F$ admits no nontrivial tamely ramified extension (i.e., it is ``tamely closed'').

Let $c \geq p^{-1}$ denote the infimum of $\left| x - y^p \right|$ over all $y \in \frako_F$.
Choose $\epsilon > 1$
and choose $y_0 \in \frako_F$ such that $\left|x-y_0^p \right| \leq \min\{1, \epsilon^p c\}$;
note that $\left| x^{1/p} - y_0 \right| = \left| x - y_0^p \right|^{1/p}$.
Since $F$ is tamely closed, we may choose
$\mu \in \frako_F$ with 
\[
\left| x - y_0^p \right| \leq \left| \mu^p \right| \leq \min\{1, \epsilon^p \left| x - y_0^p \right|\}.
\]
Put $u := (x^{1/p} - y_0)/\mu$. Note that 
\[
\left| z-u \right| \geq \epsilon^{-2} \max\{1, \left| z \right|\} \qquad ( z \in F):
\]
for $z \notin \frako_F$ this is apparent because $\left| z \right| > 1 \geq \left |u \right|$,
while for $z \in \frako_F$ we have
\[
\left| z - u \right| = \left|\mu \right|^{-1} \left| y_0 + z \mu - x^{1/p} \right| 
\geq |\mu|^{-1} c^{1/p} \geq \epsilon^{-2}.
\]
Since $F$ is tamely closed, it follows that
for any $P(T) \in F[T]$ of degree at most $p-1$, $\left| P(u) \right|$ is at least $\epsilon^{-2(p-1)}$ times the Gauss norm of $P$.

Consider a general element $z = \sum_{i=0}^{p-1} z_i u^i \in \frakm_{F(x^{1/p})}$ with $z_0,\dots,z_{p-1} \in F$.
By the previous paragraph, $\max_i \{\left| z_i \right|\} \leq \epsilon^{2(p-1)}$.
Since $\Trace_{F(x^{1/p})/F}(x^{i/p}) = 0$ for $i=1,\dots,p-1$,
\[
\Trace_{F(x^{1/p})/F}(z) = \sum_{i=0}^{p-1} p z_i (-y_0/\mu)^i
\]
has norm at most $p^{-1} \left| \mu \right|^{1-p} \epsilon^{2(p-1)} 
\leq p^{-1} c^{(1-p)/p} \epsilon^{2(p-1)} 
\leq p^{-1/p}  \epsilon^{2(p-1)}$.
By taking $\epsilon$ sufficiently close to 1, we deduce that $\Trace: \frakm_{F(x^{1/p})} \to \frakm_F$ is not surjective.

Conversely, suppose that $L$ is perfectoid.
For any finite extension $F'$ of $F$, by Theorem~\ref{T:perfectoid}
the completion $L'$ of $F'$ is again perfectoid. 
Using the existence of a commutative diagram
\[
\xymatrix{
W(\frako_{L^\flat}) \ar[r] \ar[d] & \frako_{L} \ar[d] \\
W(\frako_{L^{\prime \flat}}) \ar[r] & \frako_{L'} 
}
\]
in which the horizontal arrows are surjective (see Definition~\ref{D:theta}), the surjectivity
of $\Trace: \frakm_{F'} \to \frakm_{F}$ reduces to the surjectivity of
$\Trace: \frakm_{L^{\prime \flat}} \to \frakm_{L^{\flat}}$,
or equivalently the fact that the cokernel of the latter map
is annihilated by all of $\frakm_{L^{\flat}}$.
This holds because the annihilator of the cokernel is nonzero
(because $L^{\prime \flat}/L^{\flat}$ is a finite separable extension)
and closed under taking $p$-th roots.
(This argument is a special case of the \emph{almost purity theorem} for perfectoid rings;
see \cite[Theorem~5.5.9]{kedlaya-liu1} and \cite[Theorem~7.9]{scholze1}.)
\end{proof}

\section{Slopes of \texorpdfstring{$\varphi$}{phi}-modules}
\label{sec:slopes}

We now introduce the important concept of \emph{slopes} of $\varphi$-modules. The basic theory is motivated by the corresponding theory of slopes of vector bundles on algebraic varieties (especially curves). In the process, we identify the essential image of the embedding functor of Theorem~\ref{T:perfect embedding} in case $A$ is a field.

\begin{hypothesis}
Throughout \S\ref{sec:slopes}, let $L$ be a perfectoid field.
\end{hypothesis}

\begin{lemma} \label{L:units}
Suppose that $A$ is a field.
Then $\tilde{\calR}_{L,A}^{\times} = \bigcup_{r>0} (W^r(L^\flat)[p^{-1}] \widehat{\otimes}_{\QQ_p} A)^\times$.
(This statement can be extended to the case where $A$ is reduced, but not more generally.)
\end{lemma}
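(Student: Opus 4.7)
The plan is to handle the two containments separately. For $\supseteq$, the natural ring homomorphism $W^r(L^\flat)[p^{-1}] \widehat{\otimes}_{\QQ_p} A \to \tilde{\calR}_{L,A}$ carries units to units, giving this inclusion at once.

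For $\subseteq$, let $u \in \tilde{\calR}_{L,A}^\times$ with inverse $v$. Using the direct limit description, I choose $r_0 > 0$ with $u, v \in \tilde{\calR}^{r_0}_{L,A}$. Because $A$ is a Tate affinoid algebra that is also a field, it must be a finite extension of $\QQ_p$ carrying its unique multiplicative absolute value. Consequently, for each $s \in (0, r_0]$, the Banach algebra $\tilde{\calR}^{[s,r_0]}_{L,A}$ carries a multiplicative norm $|\cdot|_s$ extending the multiplicative norm on $\tilde{\calR}^{[s,r_0]}_L$ (if the base change fails to be a domain, I decompose it into irreducible factors and argue in each separately). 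The equation $uv=1$ then gives $|u|_s |v|_s = 1$ for each such $s$.

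The heart of the argument is a convexity--concavity sandwich. For every $w \in \tilde{\calR}^{r_0}_{L,A}$, the function $s \mapsto \log|w|_s$ is convex on $(0, r_0]$: on the dense subring $W^{r_0}(L^\flat)[p^{-1}] \widehat{\otimes}_{\QQ_p} A$, one has $\log|w|_s = \max_n (-n \log p + s \log |\overline{x}_n|)$, a maximum of affine functions in $s$, which is convex; convexity passes to the completion by simultaneous approximation of $w$ at any finite set of $s$-values. Applied to both $u$ and $v$, this forces $\log|u|_s$ to be both convex and concave, hence affine in $s$; in particular $|u|_s$ is uniformly bounded on $(0, r_0]$, and similarly for $v$. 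I then invoke the characterization of the bounded subring: an element of $\tilde{\calR}^{r_0}_{L,A}$ whose norms $|\cdot|_s$ are uniformly bounded for $s \in (0, r_0]$ must lie in $W^{r_0}(L^\flat)[p^{-1}] \widehat{\otimes}_{\QQ_p} A$. For $A = \QQ_p$ this follows from the Witt-vector structure, because the Teichm\"uller coefficients $\overline{x}_n$ of a bounded element must vanish for sufficiently negative $n$ (otherwise $p^{-n}|\overline{x}_n|^s \to p^{-n} > 1$ as $s \to 0^+$, contradicting boundedness); the case of $A$ a finite extension of $\QQ_p$ reduces to this by expressing elements in a $\QQ_p$-basis of $A$ and checking boundedness coordinatewise.

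The main obstacle I anticipate is this last step, the characterization of the bounded subring; making it precise requires careful analysis of the Witt-vector/Teichm\"uller structure of elements of $\tilde{\calR}^r_{L,A}$, which for $A = \QQ_p$ should be extractable from the technology of \cite[\S 5]{kedlaya-liu1}. A secondary technical matter is establishing multiplicativity of $|\cdot|_s$ after the tensor with $A$, which I plan to dispatch by decomposing the tensor into irreducible factors as above; the assertion in the parenthetical remark of the lemma (extension to reduced $A$, failure beyond) suggests that multiplicativity or its local analogue on components is indeed the crucial ingredient.
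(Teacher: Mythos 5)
The paper gives no argument of its own here, deferring entirely to \cite[Corollary~4.2.5]{kedlaya-liu1}; your proof reconstructs essentially that argument --- convexity of $s \mapsto \log\left|u\right|_s$ for arbitrary elements, combined with multiplicativity and $\left|u\right|_s \left|v\right|_s = 1$, forces this function to be affine and hence bounded as $s \to 0^+$ for a unit, which places $u$ in the bounded subring $W^{r_0}(L^\flat)[p^{-1}]$ --- together with the reduction of a general affinoid field $A$ (necessarily finite over $\QQ_p$, by Noether normalization for Tate affinoid algebras) to $A = \QQ_p$. The two points you flag are indeed the only ones requiring care rather than new ideas: the multiplicativity of $\left|\cdot\right|_s$ on each irreducible factor of $\tilde{\calR}^{[s,r]}_{L} \otimes_{\QQ_p} A$ (the paper performs the same splitting into integral domains indexed by embeddings of residue fields when defining the degree map), and the fact that every element of the completed ring $\tilde{\calR}^{r_0}_{L}$ --- not merely of the dense subring $W^{r_0}(L^\flat)[p^{-1}]$ --- admits a Teichm\"uller-type expansion computing $\left|\cdot\right|_s$, so that uniform boundedness as $s \to 0^+$ genuinely forces the coefficients $\overline{x}_n$ to vanish for $n \ll 0$ and hence membership in $W^{r_0}(L^\flat)[p^{-1}]$.
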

\begin{proof}
See \cite[Corollary~4.2.5]{kedlaya-liu1}.
\end{proof}

\begin{defn}
Suppose that $A$ is a field. Let $v(A)$ denote the valuation group of $A$, normalized so that $v(\QQ_p^\times) = \ZZ$. 
Let $k$ be the largest finite extension of $\mathbb{F}_p$ which embeds into both $L$ and the residue field of $A$, and let $A_0$ be the unramified extension of $\QQ_p$ with residue field $k$; then $W(L^\flat) \widehat{\otimes}_{\QQ_p} A = W(L^\flat) \otimes_{\QQ_p} A$ splits into copies of the integral domain $W(L^\flat) \otimes_{A_0} A$ indexed by choices of the embedding $k \to L$.
The $p$-adic valuation on $W(L^\flat)$ extends to a valuation on
$W(L^\flat) \otimes_{A_0} A$ with values in $v(A)$; summing across components gives a map
$W(L^\flat) \otimes_{\QQ_p} A \to v(A) \cup \{+\infty\}$.
By Lemma~\ref{L:units},
we obtain a homomorphism $\tilde{\calR}_{L,A}^{\times} \to v(A)$; note that this map is invariant under $\varphi$-pullback.

By Theorem~\ref{T:perfect equivalence}, line bundles on $X_{L,A}$ correspond to $\varphi$-modules over $\tilde{\calR}_{L,A}$ whose underlying modules are projective of rank 1.
By taking determinants of these modules and using the $\varphi$-invariance of the map 
$\tilde{\calR}_{L,A}^{\times} \to v(A)$, we obtain a morphism $\deg: \Pic(X_{L,A}) \to v(A)$ called the \emph{degree map}.
(This map can also be given an interpretation in terms of rational sections of line bundles, in parallel with the usual construction of the degree map for line bundles on an algebraic curve; see \cite{fargues-fontaine} for this viewpoint.)
As usual, for $\calF \in \VB_{L,A}$ of arbitrary rank, we define the \emph{degree} of $\calF$ as the degree of its determinant $\wedge^{\rank(\calF)} \calF$, and (if $\calF \neq 0$) the \emph{slope} of $\calF$ as the ratio $\mu(\calF) = \deg(\calF)/\rank(\calF)$.
We may transfer these definitions to $\PhiMod_{L,A}$ using Theorem~\ref{T:perfect equivalence}.
\end{defn}

\begin{lemma} \label{L:no subinvariants}
For each positive integer $n$, we have
$\ker(p^n \varphi-1: \tilde{\calR}^{[s,r]}_{L,A} \to \tilde{\calR}^{[s,r/p]}_{L,A}) = 0$. \end{lemma}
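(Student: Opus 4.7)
The plan follows the pattern of Lemma~\ref{L:phi invariants}: first reduce to the case $A = \QQ_p$ via the Schauder basis argument of \cite[Lemma~2.2.9(b)]{kedlaya-liu1} (using that $p^n\varphi - 1$ acts coordinatewise in the basis expansion since $\varphi$ is $A$-linear), and then analyze the scalar case through the convex function $g(t) := \log|x|_t$ on $[s, r]$.

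For the scalar case, suppose for contradiction that $x \in \tilde{\calR}^{[s,r]}_L$ is nonzero and satisfies $p^n \varphi(x) = x$. The function $g$ is convex in $t$, since for $x \in W^r(L^\flat)[p^{-1}]$ it is a supremum of affine-in-$t$ functions $-m\log p + t\log|\bar x_m|$, a property preserved under the uniform limits defining the completion $\tilde{\calR}^{[s,r]}_L$. Combining the hypothesis with the Witt-vector norm identity $|\varphi(y)|_{t/p} = |y|_t$ yields the functional equation $g(pt) - g(t) = n\log p$ for every $t \in [s, r/p]$. For any $t_1 < t_2$ in $[s, r/p]$ chosen close enough that $[t_1, t_2]$ and $[pt_1, pt_2]$ are disjoint (always possible since $(p-1)t_1 > 0$), this equation gives $g(pt_2) - g(pt_1) = g(t_2) - g(t_1)$, so the average slope of $g$ on $[pt_1, pt_2]$ equals $1/p$ times its average slope $m$ on $[t_1, t_2]$. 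Convexity of $g$ then demands $m \leq m/p$, forcing $m \leq 0$. Sliding $(t_1, t_2)$ across $[s, r/p]$ and repeating on $[ps, r]$, together with the monotonicity of one-sided derivatives coming from convexity, one concludes that $g$ is non-increasing throughout $[s, r]$. But this contradicts $g(r) - g(r/p) = n\log p > 0$, which is the functional equation at $t = r/p$.

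The main obstacle I anticipate is the degenerate boundary case $s = r/p$, in which $[s, r/p)$ is empty and no such $t_1 < t_2$ can be chosen. For this, I would use the gluing properties of the family $\tilde{\calR}^{[s,r]}_L$ noted in Remark~\ref{R:glueing lemma}: the equation $p^n\varphi(x) = x$ identifies $x \in \tilde{\calR}^{[r/p,r]}_L$ with $p^n\varphi(x) \in \tilde{\calR}^{[r/p^2,r/p]}_L$ on the overlap $\tilde{\calR}^{[r/p,r/p]}_L$, so these patch to an element of $\tilde{\calR}^{[r/p^2,r]}_L$ satisfying the same equation; the non-degenerate argument on this enlarged interval (where now $s' = r/p^2 < r/p$) then yields the desired vanishing.
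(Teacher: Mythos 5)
Your proof is correct, but its analytic core takes a genuinely different route from the paper's. Both arguments begin with the same Schauder-basis reduction to $A = \QQ_p$. The paper then uses the relation $x = p^n\varphi(x)$ to extend $x$ over the annuli $[p^{-m}s, r]$ for all $m$, observes that the resulting identity $|x|_t = p^n|x|_{t/p}$ keeps $|x|_t$ bounded as $t \to 0^+$, concludes from \cite[Lemma~4.2.4]{kedlaya-liu1} that $x$ is a unit of $\tilde{\calR}_{L,\QQ_p}$, and then gets a contradiction from the $\varphi$-invariance of the $p$-adic valuation on units (Lemma~\ref{L:units}). You instead stay on a compact range of radii and play the functional equation $g(pt)-g(t) = n\log p$ off against the convexity of $g(t) = \log|x|_t$: the equation forces a strictly positive average slope on $[r/p,r]$, while convexity together with the $1/p$-rescaling of slopes forces $g$ to be non-increasing there. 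Your route avoids the unit/valuation machinery entirely, at the price of invoking log-convexity of $t \mapsto |x|_t$ (also a standard fact from \cite{kedlaya-liu1}, but a more elementary one), and it localizes the extension step: you only need to glue across one extra annulus in the degenerate case $s = r/p$, whereas the paper's proof must extend $x$ all the way to $t \to 0^+$. Two small points to make explicit in a final write-up: your convexity step tacitly uses that each $|\cdot|_t$ is a genuine (nonvanishing) multiplicative norm on the completed ring, so that $g$ is real-valued on $[s,r]$ for $x \neq 0$; and when $r/p^2 < s < r/p$ the intervals $[s, r/p]$ and $[ps, r]$ leave a gap $(r/p, ps)$, which your appeal to the monotonicity of one-sided derivatives of a convex function does correctly bridge, but this deserves a sentence rather than a clause.
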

\begin{proof}
Again using \cite[Lemma~2.2.9(b)]{kedlaya-liu1}, we reduce to the case $A = \QQ_p$.
Suppose that $x$ belongs to the kernel. The equality
\[
\left| x \right|_{t} = \left| \varphi(x) \right|_{t/p} = \left| p^{-n} x \right|_{t/p} 
= p^n \left| x \right|_{t/p}
\]
holds initially for $t \in [s, r]$, then by induction for 
$t \in [p^{-m} s, r]$ for each nonnegative integer $m$,
and hence for
all $t \in (0, r]$.
It follows that $\left| x \right|_t$ remains bounded as $t \to 0^+$, so by 
\cite[Lemma~4.2.4]{kedlaya-liu1} we have $x \in \tilde{\calR}_{L,A}^{\times}$.
However, by Lemma~\ref{L:units}, this means that $x$ has a well-defined $p$-adic valuation, which is the same as the valuation of $\varphi(x)$; we must then have $n=0$, contradiction.
\end{proof}

\begin{defn}
Suppose that $A$ is a field. For $\calF \in \VB_{L,A}$ nonzero, we say $\calF$ is \emph{stable} (resp. \emph{semistable}) if there does not exist a nonzero proper subobject $\calF'$ of $\calF$ such that $\mu(\calF') \geq \mu(\calF)$ (resp.\ $\mu(\calF') > \mu(\calF)$). 
For example, by Lemma~\ref{L:no subinvariants}, any rank 1 bundle is semistable.
We say $\calF$ is \emph{\'etale} if it is semistable of degree 0.

For general $\calF \in \VB_{L,A}$, Lemma~\ref{L:no subinvariants} implies that the set of slopes of nonzero subbundles of $\calF$ is bounded above; consequently, there exists a canonical filtration
\[
0 = \calF_0 \subset \calF_1 \subset \cdots \subset \calF_m = \calF
\]
such that the successive quotients $\calF_i/\calF_{i-1}$ are semistable and
$\mu(\calF_1/\calF_0) > \cdots > \mu(\calF_m/\calF_{m-1})$. 
This filtration is called the \emph{Harder--Narasimhan filtration}, or \emph{HN filtration}, of $\calF$. Note that $\calF_1$ is the maximal subbundle of $\calF$ achieving the maximal slope among nonzero subbundles of $\calF$. 
The \emph{HN polygon} of $\calF$ is the Newton polygon of length $\rank(\calF)$ in which the slope $\mu(\calF_i/\calF_{i-1})$ occurs with multiplicity $\rank(\calF_i)$; the total height of this polygon is $\deg(\calF)$.
\end{defn}

\begin{theorem} \label{T:etale properties}
Suppose that $A$ is a field.
\begin{enumerate}
\item[(a)] The tensor product of any two semistable bundles in $\VB_{L,A}$ is again semistable. In particular, the tensor product of two \'etale bundles is again \'etale.
\item[(b)]
Let $L'$ be any perfectoid field containing $L$. Then the HN polygon remains invariant under base extension from $\VB_{L,A}$ to $\VB_{L',A}$.
\item[(c)] The essential image of the full embedding $\Rep_A(G_L) \to \PhiMod_{L,A}$
in Theorem~\ref{T:perfect embedding} consists precisely of the \'etale objects.
\end{enumerate}
\end{theorem}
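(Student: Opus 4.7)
The plan is to prove the parts in the order (b), (a), (c), leveraging the slope theory for $\varphi$-modules over $\tilde{\calR}_{L,\QQ_p}$ developed in \cite[Chapters~4 and~8]{kedlaya-liu1} and promoting those statements from $\QQ_p$-coefficients to the arbitrary field $A$.

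For (b), I observe that by uniqueness of the HN filtration together with preservation of rank and degree under base change, it is enough to show that semistability is preserved under base extension from $L$ to $L'$. I would argue by contradiction: given a semistable $\calF \in \VB_{L,A}$ and a destabilizing subobject $\calF' \subset \calF_{L'}$, first reduce to the case where $L'/L$ is Galois (by replacing $L'$ with the completion of a Galois closure, noting that a destabilizer on a subfield remains one on top), then invoke uniqueness of the maximal destabilizing subbundle to conclude that $\calF'$ is Galois-stable. Lemma~\ref{L:Witt module descend} then descends $\calF'$ to a destabilizing subobject of $\calF$, the desired contradiction.

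For (a), by (b) I may enlarge scalars so that $L$ is a completed algebraic closure and $A$ has algebraically closed residue field. In this setting the Dieudonn\'e--Manin classification \cite[Theorem~4.5.7]{kedlaya-liu1}, promoted from $\QQ_p$ to $A$ via the Schauder-basis technique of \cite[Lemma~2.2.9(b)]{kedlaya-liu1}, decomposes every object of $\PhiMod_{L,A}$ as a direct sum of standard isoclinic objects indexed by rational slopes. Under this dictionary semistability of slope $\mu$ coincides with being isoclinic of slope $\mu$, and since the tensor product of two isoclinic objects of slopes $\lambda_1, \lambda_2$ is isoclinic of slope $\lambda_1 + \lambda_2$, tensor products of semistables remain semistable.

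For (c), in the forward direction, if $V \in \Rep_A(G_L)$ corresponds to $M$ via Theorem~\ref{T:perfect embedding}, then Lemma~\ref{L:Witt module descend} yields a $\varphi$-equivariant isomorphism $M \otimes_{\tilde{\calR}_{L,A}} \tilde{\calR}_{\CC_L, A} \cong V \otimes_A \tilde{\calR}_{\CC_L, A}$, whose right side is visibly isoclinic of slope zero and hence \'etale; (b) then propagates \'etaleness back down to $M$. Conversely, given an \'etale $M$, the Dieudonn\'e--Manin classification from (a) trivializes $M$ after base change to $\CC_L$, so Lemma~\ref{L:phi invariants} produces a finite projective $A$-module $V = (M \otimes \tilde{\calR}_{\CC_L, A})^{\varphi}$ of the expected rank equipped with a continuous $G_L$-action via Theorem~\ref{T:perfectoid}(e), and Lemma~\ref{L:Witt module descend} confirms that $V$ is sent back to $M$ under the functor of Theorem~\ref{T:perfect embedding}. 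The main obstacle throughout is the coefficient change from $\QQ_p$ to $A$ in Dieudonn\'e--Manin: one must verify that the Schauder-basis descent really does transport the pure-slope extraction cleanly and does not permit exotic indecomposable types to appear over $A$.
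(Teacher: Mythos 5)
The paper offers no argument for this theorem beyond the citation to \cite[\S 4]{kedlaya-liu1}, so the comparison must be with the proofs there; your sketch follows the same broad strategy (Galois descent of the destabilizing subobject, Dieudonn\'e--Manin over an algebraically closed base, triviality of \'etale objects over $\CC_L$) but has two genuine gaps. The first is in (b): the statement allows $L'$ to be \emph{any} perfectoid field containing $L$, and such an $L'$ need not be algebraic over $L$, so ``replacing $L'$ with the completion of a Galois closure'' is not available. Your reduction therefore only proves (b) for algebraic extensions. (This does not derail your uses of (b) inside (a) and (c), where the extension is $\CC_L/L$, but it leaves the stated theorem unproved.) The repair in \cite[\S 4.5--4.6]{kedlaya-liu1} is to introduce the notion of a \emph{pure} (isoclinic) $\varphi$-module, characterized by the existence of a suitable $\varphi^a$-stable lattice after twisting: purity is manifestly preserved under arbitrary base change, and the real content --- proved via the completed algebraic closure and Dieudonn\'e--Manin --- is that pure coincides with semistable. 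Some such base-change-stable reformulation of semistability is an essential missing idea in your argument for (b).

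The second gap is the coefficient promotion, which you flag but do not close, and which is not a matter of Schauder bases. Schauder bases (as in Lemma~\ref{L:Witt module descend} and Lemma~\ref{L:phi invariants}) transport statements about invariants and exactness of completed tensor products from $\QQ_p$ to $A$; they cannot produce a structure theorem, and the naive Dieudonn\'e--Manin statement is false over $\tilde{\calR}_{L,A}$. Since $A$ is an affinoid field it is finite over $\QQ_p$; rank-one objects can then have degree in $v(A) \setminus \ZZ$ (e.g.\ $\varphi$ acting by a uniformizer of a ramified $A$), and $\varphi$ permutes the idempotent factors of $\tilde{\calR}_{L,A} = \tilde{\calR}_{L,\QQ_p} \otimes_{\QQ_p} A$ whenever the residue fields of $A$ and of $L$ share a nontrivial finite subfield, so the simple objects are not just the standard $\QQ_p$-objects tensored up to $A$. (For the same reason, ``enlarge scalars so that $A$ has algebraically closed residue field'' is not possible within the category of affinoid fields.) The standard fix is restriction of scalars along $A/\QQ_p$: one checks how rank, degree and slope transform, that $A$-linear semistability agrees with semistability of the underlying $\QQ_p$-linear object, and that $M \otimes_{\tilde{\calR}_{L,A}} N$ is a $\varphi$-stable direct summand of $M \otimes_{\tilde{\calR}_{L,\QQ_p}} N$ (using that $A \otimes_{\QQ_p} A \to A$ splits), which reduces (a) to the $\QQ_p$-coefficient case. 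With these two points supplied, your outline of (c) is the standard and correct one.
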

\begin{proof}
See \cite[\S 4]{kedlaya-liu1} and references therein.
\end{proof}

\begin{remark}
For general $A$, we may define the degree, rank, slope, and HN polygon of $\calF \in \VB_{L,A}$ as functions on $\Maxspec(A)$. Unfortunately, these functions do not extend well to the Berkovich space associated to $A$, because the theory of slopes behaves poorly when the degree map does not take discrete values.

In addition, if $A$ is not a field, then the subcategory of $\PhiMod_{L,A}$ consisting of pointwise \'etale objects may be strictly larger than the essential image of $\Rep_A(G_L)$; in fact, this already occurs for objects of rank 1, as noted in \cite[Remarque 4.2.10]{berger-colmez}.
For further discussion, see \cite{kedlaya-liu-families}.
\end{remark}

\begin{lemma} \label{L:bounded slopes}
For $\calF \in \VB_{L,A}$, the HN polygon of $\calF$, as a function on $\Maxspec(A)$, is bounded above and below, and its height is constant on connected components of $\Maxspec A$.
\end{lemma}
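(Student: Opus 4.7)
I will prove the two assertions---boundedness of the HN polygon and constancy of its height on connected components of $\Maxspec A$---separately, after a common setup. Via Theorem~\ref{T:perfect equivalence} and Lemma~\ref{L:phi modules bundles}(c), realise $\calF$ as a finite projective $\tilde{\calR}^{[s,r]}_{L,A}$-module $M$ equipped with a $\varphi$-isomorphism, for some $0 < s \leq r/p$. Using that $\tilde{\calR}^{[s,r]}_{L,A}$ is really strongly noetherian (Remark~\ref{R:glueing lemma}), a pointwise trivialisation of $M$ at each $x \in \Maxspec A$ spreads out via Nakayama's lemma to a trivialisation on some rational subdomain containing $x$; by quasi-compactness this yields a finite rational cover $\{\Maxspec A_i\}_{i \in I}$ on each piece of which $M \otimes_A A_i$ is free, and with respect to such a basis $\varphi$ is represented by a matrix $F_i \in \GL_n(\tilde{\calR}^{[s,r]}_{L,A_i})$ (for possibly shrunken $r$ and enlarged $s$).

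For the boundedness claim, at each $x \in \Maxspec A_i$ the extreme slopes of $\calF_x$ are controlled by the Banach operator norms of $F_{i,x}$ and $F_{i,x}^{-1}$ in $\tilde{\calR}^{[s,r]}_{L,\kappa(x)}$, via the characterisation of the slope filtration in terms of asymptotic growth of iterates $\varphi^n$ (as in \cite[\S 4]{kedlaya-liu1}). These norms depend continuously on $x$ and $\Maxspec A_i$ is quasi-compact, so they admit uniform upper bounds on $\Maxspec A_i$; summing over the finite cover yields uniform upper and lower bounds on the slopes of $\calF_x$ over all of $\Maxspec A$. Since the rank of $\calF$ is locally constant, this bounds the HN polygon globally.

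For the height claim, use the identity $\deg\calF_x = \deg((\det\calF)_x)$ to reduce to the case $\rank\calF = 1$, where each $F_i$ is a single unit $\alpha_i \in \tilde{\calR}_{L,A_i}^\times$. By Lemma~\ref{L:units} one may take $\alpha_i \in (W^{r'}(L^\flat)[p^{-1}] \widehat{\otimes}_{\QQ_p} A_i)^\times$ for some $r'>0$, and the height of $\calF_x$ equals the valuation of $\alpha_{i,x}$ in the sense of the degree map for the field $\kappa(x)$. Local constancy of this valuation will follow from a Newton-polygon argument: the valuation coincides (up to sign) with the first vertex of the Newton polygon of $\alpha_i$ with respect to the family of Gauss-type norms $|\cdot|_\rho$ for $\rho \in (0, r']$, and this vertex is determined by a specific Teichm\"uller coefficient whose image is forced to be a unit in every residue field $\kappa(x)$---otherwise $\alpha_{i,x}$ would fail to be invertible, contradicting Lemma~\ref{L:units}. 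This ``unit-ness of the leading Teichm\"uller coefficient'' is an open condition, so $x \mapsto v_x(\alpha_{i,x})$ is locally constant on $\Maxspec A_i$; patching across the finite cover then gives constancy on each connected component of $\Maxspec A$.

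The main technical obstacle is this last local-constancy step. The subtlety is that when $A$ is not a field, the degree map is not defined globally on $\tilde{\calR}_{L,A}^\times$---only pointwise on $\Maxspec A$---so one must verify that the pointwise degrees assemble coherently into a locally constant function. The clean route is to isolate the leading Teichm\"uller coefficient of $\alpha_i$ responsible for the valuation, and to argue (using openness of invertibility in the affinoid Banach algebra $W^{r'}(L^\flat)[p^{-1}] \widehat{\otimes}_{\QQ_p} A_i$) that this leading coefficient is stable under all specialisations $A_i \to \kappa(x)$. Once this Newton-polygon stability is established, both assertions of the lemma follow from the preceding two paragraphs.
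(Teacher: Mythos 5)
Your argument takes a genuinely different route from the paper's, and unfortunately the route has real gaps. The paper's proof is a short global argument: by Lemma~\ref{L:enough phi-invariants}, for a suitable twist the $\varphi$-invariants of $\calF$ generate it, so $\calF$ receives a surjection from $\calO(n)^{\oplus d}$ \emph{over all of $\Maxspec(A)$ at once}; since a quotient of a semistable bundle has all HN slopes at least the slope of the source, every fiber $\calF_x$ has slopes bounded below by $n$, and applying the same to $\calF^\dual$ bounds them above. No local trivialization on $\Maxspec(A)$ and no matrix-norm estimates are needed, and the bound is visibly uniform in $x$.

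The first gap in your setup is the trivialization step. The module $M$ is finite projective over $\tilde{\calR}^{[s,r]}_{L,A}$, not over $A$, so Nakayama's lemma does not let you spread a fiberwise basis out over a rational subdomain of $\Maxspec(A)$; moreover the fiber $M \otimes_A \kappa(x)$ is only a finite projective $\tilde{\calR}^{[s,r]}_{L,\kappa(x)}$-module and need not be free, so even the ``pointwise trivialisation'' you start from is unavailable in general. (This could be repaired by realizing $M$ as a direct summand of a free module and estimating norms of the projector together with the $\varphi$-matrix, but that is a different argument from the one you wrote.) The second and more serious gap is the height-constancy step, which you yourself identify as the main obstacle but do not close: when $A$ is not a field there is no well-defined ``leading Teichm\"uller coefficient'' of a unit of $W^{r'}(L^\flat)[p^{-1}] \widehat{\otimes}_{\QQ_p} A_i$ (the degree is only defined pointwise, after splitting $W(L^\flat) \otimes_{\QQ_p} \kappa(x)$ into domains as in the definition of the degree map), and the claim that invertibility forces a single coefficient to control the valuation at every $x$ is not established; valuations of specializations of a unit could a priori jump. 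A workable substitute is to reduce to rank one via determinants as you do, note that Lemma~\ref{L:units} places the $\varphi$-matrix $\alpha$ in $(W^{r'}(L^\flat)[p^{-1}] \widehat{\otimes}_{\QQ_p} A)^\times$, and then verify directly that the composite $x \mapsto v_x(\alpha_x)$ of the pointwise valuation maps is locally constant --- but that verification is precisely what is missing from your sketch.
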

\begin{proof}
Using Lemma~\ref{L:enough phi-invariants},
we see that $\calF$ admits a surjective morphism from $\calO(n)^{\oplus d}$ for some integers $n,d$. It follows that the HN polygon of $\calF$ has no slopes less than $n$.
The same argument applies to the dual bundle shows that the HN polygon of $\calF$ also has slopes which are uniformly bounded above.
\end{proof}

\section{From \texorpdfstring{$\varphi$}{phi}-modules to \texorpdfstring{$(\varphi, \Gamma)$}{(phi, Gamma)}-modules}
\label{sec:alternate}

We now use $\varphi$-modules to give an alternate description of the category $\PhiGamma_{K,A}$ in the language of perfectoid fields. In the process, we will establish Theorem~\ref{T:phi gamma embedding}.

\begin{defn}
Let $L_K$ be the completion of $K(\mu_{p^\infty})$; it is a perfectoid field by
Example~\ref{exa:cyclotomic} and Theorem~\ref{T:perfectoid}.
Let $\PhiGammatilde_{K,A}$ denote the category of objects of $\PhiMod_{L_K,A}$ equipped with continuous semilinear $\Gamma_K$-actions, where $\Gamma_K = \Gal(K(\mu_{p^\infty})/K)$.
\end{defn}

We now complete the discussion initiated in Remark~\ref{R:not connected}.
\begin{remark} \label{R:not connected2}
Via the cyclotomic character, we may identify $\Gamma_F$ with $\Gamma$ and $\Gamma_K$ with an open subgroup of $\Gamma$. Put $\tilde{\calR}_{K,A} = \Ind_{\Gamma_K}^\Gamma \tilde{\calR}_{L_K,A}$; this is a direct sum of copies of $\tilde{\calR}_{L_K,A}$ indexed by the connected components of $K \otimes_F F(\mu_{p^\infty})$. 
We may then identify objects of $\PhiGammatilde_{K,A}$ with finite projective
$\tilde{\calR}_{K,A}$-modules equipped with continuous semilinear $\Gamma$-actions.
\end{remark}

\begin{defn} \label{D:untilde to tilde map}
Choose a coherent sequence $\zeta_p, \zeta_{p^2}, \dots$ of $p$-power roots of unity
and let $\epsilon$ be the element $(1, \zeta_p, \zeta_{p^2}, \dots) \in L_K^{\flat}$.
Then the map $W(k) \llbracket \pi \rrbracket \to W(\frako_{L^\flat_K})$ taking $\pi$ to $[\epsilon]-1$
is $(\varphi, \Gamma)$-equivariant; it thus extends to a $(\varphi, \Gamma)$-equivariant map $\calR_{K,A} \to \tilde{\calR}_{K,A}$.
\end{defn}

\begin{theorem} \label{T:tilde no tilde}
The categories $\PhiGamma_{K,A}$ and $\PhiGammatilde_{K,A}$ are equivalent via base extension along $\calR_{K,A} \to \tilde{\calR}_{K,A}$.
\end{theorem}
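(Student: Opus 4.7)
The plan is to prove the equivalence by separately establishing full faithfulness and essential surjectivity of the base extension functor along $\calR_{K,A}\to\tilde{\calR}_{K,A}$, after first reducing both questions to a statement over fixed closed annuli. By Lemma~\ref{L:phi modules bundles}, objects of $\PhiGammatilde_{K,A}$ are equivalent to finite projective $\tilde{\calR}^{[s,r]}_{L_K,A}$-modules (for any fixed $0<s\le r/p$) equipped with continuous semilinear $\Gamma_K$-action and a $\varphi$-pullback isomorphism; an analogous description holds for $\PhiGamma_{K,A}$ over $\calR^{[s,r]}_{K,A}$ by imitating \cite[Proposition~2.2.7]{kpx}. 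Since both $\varphi$ and $\Gamma_K$ act through these interval rings (with $\varphi$ shrinking radii), the theorem reduces to the corresponding equivalence at the level of interval rings.

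For full faithfulness, using the internal Hom formalism in both categories reduces the question to showing that for any $M\in\PhiGamma_{K,A}$ the natural map
\[
M^{\varphi=1,\,\Gamma_K=1}\longrightarrow\bigl(M\otimes_{\calR_{K,A}}\tilde{\calR}_{K,A}\bigr)^{\varphi=1,\,\Gamma_K=1}
\]
is an isomorphism. Writing $M$ as a direct summand of a finite free module collapses this to the ring-theoretic identity $(\tilde{\calR}^{[s,r]}_{L_K,A})^{\Gamma_K}=\calR^{[s,r]}_{K,A}$, which is a Tate--Sen/Ax--Sen--Tate invariants computation: for $A=\QQ_p$ this follows from the results of \cite{kedlaya-liu1} used in Lemma~\ref{L:Witt module descend}, and the passage to general affinoid $A$ proceeds by the same Schauder basis argument as in that proof.

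Essential surjectivity is the main obstacle and the content I would need to extract from \cite{kpx}. Given a finite projective $\tilde{\calR}^{[s,r]}_{L_K,A}$-module $\tilde M$ with continuous semilinear $\Gamma_K$-action and a compatible $\varphi$-pullback isomorphism, one must locate inside $\tilde M$ a $\Gamma_K$-stable $\calR^{[s,r]}_{K,A}$-lattice $M$ of full rank such that $M\otimes_{\calR^{[s,r]}_{K,A}}\tilde{\calR}^{[s,r]}_{L_K,A}\cong\tilde M$. The plan is to invoke the Tate--Sen decompletion/overconvergence machinery: the embedding $\calR^{[s,r]}_{K,A}\hookrightarrow\tilde{\calR}^{[s,r]}_{L_K,A}$ admits normalized trace operators satisfying the Tate--Sen axioms with uniform constants, and a fixed-point argument starting from an arbitrary $\tilde{\calR}^{[s,r]}_{L_K,A}$-basis of $\tilde M$ and averaging under $\Gamma_K$ produces the desired descended lattice. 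For $\QQ_p$-coefficients this is the Cherbonnier--Colmez \cite{cherbonnier-colmez} decompletion theorem refined by Berger \cite{berger}; the affinoid version, requiring operator-norm estimates uniform in $A$ that survive the completed tensor product $\widehat{\otimes}_{\QQ_p}A$, is exactly the technical core of \cite[\S 2]{kpx}, and this uniform Tate--Sen descent is the hard part of the argument.
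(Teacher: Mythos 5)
Your overall architecture (reduce to interval rings, then full faithfulness plus essential surjectivity via decompletion) matches the paper's, and your essential surjectivity sketch is in the right spirit: the paper likewise reduces to $K=F$, works with a finite projective $\tilde{\calR}^{[s,r]}_{L_F,A}$-module, and descends by successive approximation. One structural correction there: the paper descends first to the dense subring $\breve{\calR}^{[s,r]}_{F,A}$ (the union of the closures of $\varphi^{-n}((F\widehat{\otimes}_{\QQ_p}A)[\pi^{\pm}])$) rather than directly to $\calR^{[s,r]}_{K,A}$, which is \emph{not} dense in $\tilde{\calR}^{[s,r]}_{L_K,A}$; the Tate--Sen traces naturally land in $\varphi^{-n}(\calR)$ for some $n$, and one then uses the $\varphi$-structure to drop back to level $0$. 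Your sketch elides this, and the uniform-in-$A$ estimates you invoke live in \cite{berger-colmez} and \cite[\S 5.6]{kedlaya-liu2} more than in \cite[\S 2]{kpx}; but these are repairable.

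The genuine gap is in your full faithfulness argument. The reduction to invariants of $P=M^\dual\otimes N$ is correct, but the next step fails twice over. First, writing $P$ as a direct summand of a finite free $\calR_{K,A}$-module does not reduce the computation of $(P\otimes_{\calR_{K,A}}\tilde{\calR}_{K,A})^{\varphi=1,\Gamma_K=1}$ to a statement about the ring: the $\varphi$- and $\Gamma_K$-actions on $P$ are nontrivial semilinear actions that do not restrict to the standard actions on the ambient free module, so the invariants do not commute with the splitting. (This trick is legitimate in Lemma~\ref{L:Witt module descend} precisely because there the group acts only through the second tensor factor.) Second, the ring identity you reduce to, $(\tilde{\calR}^{[s,r]}_{L_K,A})^{\Gamma_K}=\calR^{[s,r]}_{K,A}$, is false: the embedding sends $\pi$ to $[\epsilon]-1$, on which $\Gamma_K$ acts nontrivially via the cyclotomic character, so $\calR^{[s,r]}_{K,A}$ is not even contained in the invariants; and conversely the passage from $\tilde{\calR}_{L_K,A}$ to $\calR_{K,A}$ is a decompletion, not a Galois descent, so no invariants computation recovers the imperfect ring. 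The correct Tate--Sen statement is a topological splitting of $\tilde{\calR}^{[s,r]}_{L_F,A}$ as $\varphi^{-n}(\calR^{[p^ns,p^nr]}_{F,A})$ plus a complement on which $\gamma-1$ is invertible --- which powers essential surjectivity but is not an invariants identity. The paper sidesteps all of this for full faithfulness: it reduces to showing that a $(\varphi,\Gamma)$-stable element of $M\otimes_{\calR_{K,A}}\tilde{\calR}_{K,A}$ already lies in $M$, and then reduces to the known case $A=\QQ_p$ by a Schauder-basis argument, citing \cite[Theorem~6.1.7]{kedlaya-liu2} (or Berger's $B$-pair theorem plus Theorem~\ref{T:perfect equivalence}) for that case. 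You should either adopt that reduction or prove the module-level, not ring-level, decompletion statement.
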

\begin{proof}
In the case $A = \QQ_p$, this is proved in \cite[Theorem~6.1.7]{kedlaya-liu2}.
It can also be deduced from prior results; for example, for $L$ a completed algebraic closure of $K$, Berger \cite[Th\'eor\`eme~A]{berger-b-pairs} constructed an equivalence between $\Phi\Gamma_{K,A}$ and the category of objects of $\BPair_{L,A}$ equipped with continuous semilinear $G_K$-actions. By Theorem~\ref{T:perfect equivalence}, these can be interpreted as objects of $\PhiMod_{L,A}$ equipped with continuous semilinear $G_K$-actions;
using Lemma~\ref{L:Witt module descend} and Lemma~\ref{L:phi modules bundles}, these can in turn be identified with objects of $\PhiGammatilde_{K,A}$.

To obtain full faithfulness in the general case, note that since both categories admit internal Homs in a compatible way, we reduce to checking that for $M \in \PhiGamma_{K,A}$, every $(\varphi, \Gamma)$-stable element $\bv \in M \otimes_{\calR_{K,A}} \tilde{\calR}_{K,A}$ belongs to $M$ itself. Using a Schauder basis for $A$ over $\QQ_p$ (see the proof of Lemma~\ref{L:Witt module descend}), we may construct a family of bounded $\QQ_p$-linear morphisms
$A \to \QQ_p$ whose kernels have zero intersection; by tensoring along these, we reduce the claim that $\bv \in M$ to a family of corresponding assertions in the previously treated case $A = \QQ_p$.

To obtain essential surjectivity, one may emulate the proof of \cite[Theorem~6.1.7]{kedlaya-liu2}; we only give a brief sketch here, as details will be given in upcoming work of Chojecki and Gaisin. By full faithfulness, we may reduce to the case $K = F$.
By Lemma~\ref{L:phi modules bundles}, we may start with a finite projective $\tilde{\calR}^{[s,r]}_{L_F,A}$-module $\tilde{M}$ equipped with an isomorphism of the base extensions of $\varphi^* \tilde{M}$ and $\tilde{M}$ to $\tilde{\calR}^{[s,r/p]}_{L_F,A}$, plus a compatible semilinear $\Gamma$-action.
Within $\tilde{\calR}^{[s,r]}_{L_F,A}$, we have a dense subring $\breve{\calR}^{[s,r]}_{F,A}$ consisting of the union of the closures of
the subrings $\varphi^{-n}((F \widehat{\otimes}_{\QQ_p} A)[\pi^{\pm}])$ for $n \geq 0$; it will suffice to descend $\tilde{M}$ to a finite projective $\breve{\calR}^{[s,r]}_{F,A}$-module $\breve{M}$
on which $\varphi$ and $\Gamma$ continue to act. Using the density of
$\breve{\calR}^{[s,r]}_{F,A}$ in $\tilde{\calR}^{[s,r]}_{L_F,A}$, we may apply
\cite[Lemma 5.6.8]{kedlaya-liu2} to descend the underlying module of $\tilde{M}$, but the resulting descended module will typically not be $\Gamma$-stable; this may corrected using a sequence of successive approximations as in \cite[Lemma 5.6.9]{kedlaya-liu2}. We thus obtain a $\Gamma$-stable descended module, which is then easily shown to be also $\varphi$-stable.
\end{proof}

As a corollary, we may now establish Theorem~\ref{T:phi gamma embedding1}.
\begin{proof}[Proof of Theorem~\ref{T:phi gamma embedding1}]
By Theorem~\ref{T:tilde no tilde}, it suffices to exhibit a full embedding
$\Rep_A(G_K) \to \PhiGammatilde_{K,A}$. We obtain this embedding from
Theorem~\ref{T:perfect embedding} by adding $\Gamma_K$-descent data.
\end{proof}

\begin{defn}
We say that $M \in \PhiGamma_{K,A}$ is \emph{\'etale} if its image in $\PhiMod_{L_K,A}$ is \'etale. 
\end{defn}

\begin{theorem}
Suppose that $A$ is a field. Then $M \in \PhiGamma_{K,A}$ is \'etale if and only if it belongs
to the essential image of the functor $\Rep_A(G_K)\to \PhiGamma_{K,A}$.
\end{theorem}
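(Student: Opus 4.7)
The plan is to combine the equivalence of Theorem~\ref{T:tilde no tilde} with Theorem~\ref{T:etale properties}(c) applied to the perfectoid field $L_K$, and then to perform Galois descent along the short exact sequence $1 \to G_{L_K} \to G_K \to \Gamma_K \to 1$. (Here $G_{L_K} = G_{K(\mu_{p^\infty})}$, since the absolute Galois group is insensitive to completion of a henselian valued field.)

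For the forward implication, suppose $M \in \PhiGamma_{K,A}$ comes from $V \in \Rep_A(G_K)$. I would trace through the proof of Theorem~\ref{T:phi gamma embedding1}: under Theorem~\ref{T:tilde no tilde}, $M$ corresponds to the object $\tilde{M} \in \PhiGammatilde_{K,A}$ whose underlying $\varphi$-module is the image of $V|_{G_{L_K}}$ via Theorem~\ref{T:perfect embedding}. By Theorem~\ref{T:etale properties}(c), this $\varphi$-module is \'etale, so by definition $M$ is \'etale.

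Conversely, suppose $M \in \PhiGamma_{K,A}$ is \'etale and let $\tilde{M}$ be its image under Theorem~\ref{T:tilde no tilde}. The underlying $\varphi$-module of $\tilde{M}$ is \'etale, so by Theorem~\ref{T:etale properties}(c) it corresponds to some $V \in \Rep_A(G_{L_K})$, recovered explicitly as $V = (\tilde{M} \otimes_{\tilde{\calR}_{L_K,A}} \tilde{\calR}_{\CC_{L_K},A})^{\varphi=1}$. The main task is to extend the $G_{L_K}$-action on $V$ to a continuous $G_K$-action whose associated $(\varphi,\Gamma)$-module is $\tilde{M}$. I would do this by defining $g \cdot (m \otimes r) := \bar{g}(m) \otimes g(r)$ on $\tilde{M} \otimes_{\tilde{\calR}_{L_K,A}} \tilde{\calR}_{\CC_{L_K},A}$, where $g \in G_K$ reduces to $\bar{g} \in \Gamma_K$ and acts on $\tilde{\calR}_{\CC_{L_K},A}$ through its natural Galois action. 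Well-definedness over $\tilde{\calR}_{L_K,A}$ follows because the $\Gamma_K$-action on $\tilde{\calR}_{L_K,A}$ matches the restriction of the $G_K$-action on $\tilde{\calR}_{\CC_{L_K},A}$; this formula gives a group action commuting with $\varphi$, so passing to $\varphi$-invariants yields a $G_K$-action on $V$ whose restriction to $G_{L_K}$ recovers the original action (elements of $G_{L_K}$ map to the identity in $\Gamma_K$).

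The main technical obstacle is verifying continuity of the new $G_K$-action on $V$ and checking that applying Theorem~\ref{T:phi gamma embedding1} back to $V$ recovers $M$. Continuity should follow from continuity of the $\Gamma_K$-action on $\tilde{M}$ combined with continuity of the Galois action on $\tilde{\calR}_{\CC_{L_K},A}$, using that $V$ has finite rank over $A$ so that its topology is controlled by finitely many elements. Recovery of $M$ is then a matter of unwinding the construction: the embedding of Theorem~\ref{T:phi gamma embedding1} factors through $V \mapsto V|_{G_{L_K}}$ followed by attaching the $\Gamma_K$-action induced by $G_K \twoheadrightarrow \Gamma_K$, and by construction this reproduces $\tilde{M}$.
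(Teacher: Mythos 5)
Your proposal is correct and follows essentially the same route as the paper, which simply declares the result ``immediate from Theorem~\ref{T:etale properties}(c)'': what you have written out is the unwinding of that immediacy, namely transporting the $\Gamma_K$-descent data across the full embedding of Theorem~\ref{T:perfect embedding} (equivalently, across Theorem~\ref{T:tilde no tilde}) in both directions. The details you flag as technical obstacles (continuity of the extended $G_K$-action, recovery of $M$) are handled exactly by the full faithfulness and equivariance already established, so no new ideas are needed beyond what you have.
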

\begin{proof}
This is immediate from Theorem~\ref{T:etale properties}(c).
\end{proof}

\begin{remark}
Theorem~\ref{T:tilde no tilde}, when restricted to \'etale objects, reproduces the Cherbonnier-Colmez theorem on the overconvergence of $p$-adic representations \cite{cherbonnier-colmez}. However, the proof we have in mind is closer in spirit to the one in \cite[\S 2]{kedlaya-new-phigamma}.
\end{remark}

\begin{remark}
Note that the embedding in Theorem~\ref{T:phi gamma embedding1} is not quite canonical: it depends on the coherent sequence of $p$-power roots of unity chosen in Definition~\ref{D:untilde to tilde map}. This suggests that in some sense, the embedding of $\Rep_{G_K}(A)$ into $\PhiGammatilde_{K,A}$  is more natural than the embedding into $\PhiGamma_{K,A}$.
\end{remark}

\begin{remark}
Let $F/K$ be a deeply ramified Galois algebraic extension, so that by Lemma~\ref{L:deeply ramified is perfectoid} the completion $L$ of $F$ is a perfectoid field. 
Using Lemma~\ref{L:Witt module descend}, we may describe the category
$\PhiGammatilde_{K,A}$ as the category of objects of $\PhiMod_{L,A}$ equipped with continuous semilinear $\Gal(F/K)$-actions; in particular, we again obtain a full embedding
of $\Rep_A(G_K)$ into this category. However, in general there is no natural analogue of the category $\PhiGamma_{K,A}$ because the ring $\tilde{\calR}_{L,A}$ cannot be obtained in a natural way from a ring of Laurent series. For this reason, we are driven to reformulate known constructions involving $\PhiGamma_{K,A}$ using $\PhiGammatilde_{K,A}$ with the eye towards generalizing to towers other than the cyclotomic tower; we will pick up on this theme
in \S\ref{sec:coda}.
\end{remark}

\section{Cohomology of $(\varphi, \Gamma)$-modules}

We now upgrade the previous discussion to relate Galois cohomology to $(\varphi, \Gamma)$-modules. This time, we start directly with the perfectoid framework.
\begin{defn}
Let $L$ be a perfectoid field.
For $\tilde{M} \in \PhiMod_{L,A}$, let $H^0_{\varphi}(\tilde{M}), H^1_{\varphi}(\tilde{M})$ be the kernel and cokernel of $\varphi-1$ on $\tilde{M}$, and put $H^i_{\varphi}(\tilde{M}) = 0$ for $i>1$.
\end{defn}

\begin{lemma} \label{L:truncate cohomology}
Choose $r,s$ with $0 < s \leq r/p$. Let $\tilde{M}, \tilde{M}^r, \tilde{M}^{[s,r]}$
be corresponding objects in the categories (a),(b),(c) of Lemma~\ref{L:phi modules bundles}.
Then the vertical arrows in the diagram
\[
\xymatrix@C=2cm{
0 \ar[r] & \tilde{M} \ar^(.4){\varphi-1}[r] & \tilde{M} \ar[r] &  0 \\
0 \ar[r] & \tilde{M}^r \ar^(.4){\varphi-1}[r] \ar[u] \ar[d] & \tilde{M}^r \otimes_{\tilde{\calR}^r_{L,A}} \tilde{\calR}^{r/p}_{L,A} \ar[r] \ar[d] \ar[u] & 0 \\
0 \ar[r] & \tilde{M}^{[s,r]} \ar^(.4){\varphi-1}[r] & \tilde{M}^{[s,r]} \otimes_{\tilde{\calR}^{[s,r]}_{L,A}} \tilde{\calR}^{[s,r/p]}_{L,A} \ar[r] & 0
}
\]
constitute quasi-isomorphisms of the horizontal complexes.
\end{lemma}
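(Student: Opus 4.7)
The plan is to check the two quasi-isomorphisms separately (middle-to-top and middle-to-bottom), each of which reduces to showing the induced maps on $H^0 = \ker(\varphi - 1)$ and $H^1 = \operatorname{coker}(\varphi - 1)$ are bijections. Both comparisons rest on a single technical observation which I would establish first: $\varphi$ restricts to a set-theoretic ring isomorphism $\varphi: \tilde{\calR}^{r'}_{L,A} \xrightarrow{\sim} \tilde{\calR}^{r'/p}_{L,A}$ for every $r' > 0$, and analogously $\varphi: \tilde{\calR}^{[s', r']}_{L,A} \xrightarrow{\sim} \tilde{\calR}^{[s'/p, r'/p]}_{L,A}$ on intervals. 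The inverse sends $[\bar{y}]$ to $[\bar{y}^{1/p}]$ (well-defined because $L^\flat$ is perfect) and is isometric for the relevant Witt-vector norms. Combined with the linear isomorphism $\varphi^* \tilde{M}^{r'} \cong \tilde{M}^{r'/p}$ from Lemma~\ref{L:phi modules bundles}(b), this upgrades the $\varphi$-semilinear map $\tilde{M}^{r'} \to \tilde{M}^{r'/p}$ to a set-theoretic bijection, so $\varphi^{-n}$ carries $\tilde{M}^{r'}$ into the \emph{smaller} submodule $\tilde{M}^{r' p^n}$; the same holds for intervals.

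With this in hand, all four isomorphisms reduce to finite iterations of $\varphi^{\pm 1}$. For $H^0$: any $v \in \tilde{M}$ with $\varphi(v) = v$ lies in some $\tilde{M}^{r_0}$, and $v = \varphi^{-n}(v)$ forces $v \in \tilde{M}^{r_0 p^n}$, hence $v \in \tilde{M}^r$ once $n \geq \log_p(r/r_0)$. For $H^1$ \emph{surjectivity}, the telescoping identity
\[
w = \varphi^{-n}(w) + (\varphi - 1)\Bigl(\sum_{i=1}^{n} \varphi^{-i}(w)\Bigr)
\]
identifies the class of $w \in \tilde{M}$ modulo $(\varphi - 1)\tilde{M}$ with that of $\varphi^{-n}(w) \in \tilde{M}^{r/p}$ for $n$ large. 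For $H^1$ \emph{injectivity}, given $y \in \tilde{M}^{r/p}$ with $y = (\varphi - 1)(x)$ and $x \in \tilde{M}^{r_0}$, iteratively applying $\varphi^{-1}$ to $\varphi(x) = x + y$ yields $x = \varphi^{-n}(x) + \sum_{i=1}^n \varphi^{-i}(y)$; since each $\varphi^{-i}(y) \in \tilde{M}^{r p^{i-1}} \subseteq \tilde{M}^r$ and $\varphi^{-n}(x) \in \tilde{M}^{r_0 p^n} \subseteq \tilde{M}^r$ for $n$ sufficiently large, $x$ itself belongs to $\tilde{M}^r$, and we may take $x' = x$. The middle-to-bottom analogues are identical, substituting $\tilde{M}^{[s,r]}$ throughout.

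The main obstacle is thus not the cohomology chase, which is essentially formal once one knows $\varphi$ is invertible between the nested rings, but rather the Witt-vector analysis underpinning the first paragraph: verifying that $\varphi^{-1}$ strictly improves convergence in the Fr\'echet or Banach norm, by unpacking the $|\cdot|_t$-norm formulas on $W^r(L^\flat)$ from the definitions and tracking the effect of $[\bar{y}] \mapsto [\bar{y}^{1/p}]$ through the completion process.
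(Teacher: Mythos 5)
Your treatment of the upper comparison (between $\tilde{M}$ and $\tilde{M}^r$) is correct and is in the spirit of the argument the paper cites: since $L^\flat$ is perfect, $\varphi$ is an isometric bijection from $(W^{r'}(L^\flat), \left|\cdot\right|_{r'})$ onto $(W^{r'/p}(L^\flat), \left|\cdot\right|_{r'/p})$, hence a bijection $\tilde{\calR}^{r'}_{L,A} \to \tilde{\calR}^{r'/p}_{L,A}$, and combined with the linearized isomorphism of Lemma~\ref{L:phi modules bundles}(b) it makes $\varphi: \tilde{M}^{r'} \to \tilde{M}^{r'/p}$ bijective; the telescoping identities then dispose of $H^0$ and $H^1$ since $\tilde{M}$ is the increasing union of the $\tilde{M}^{r/p^n}$. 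One small repair: for $i \geq 2$ the module $\tilde{M}^{rp^{i-1}}$ is not defined ($\tilde{M}$ is only specified over $\tilde{\calR}^{r}_{L,A}$ and its base extensions to smaller radii); what you need, and what is true, is that $\varphi^{-1}$ carries $\tilde{M}^{r/p}$ onto $\tilde{M}^{r}$ and therefore maps $\tilde{M}^{r}$ into itself, so that $\varphi^{-i}(y) \in \tilde{M}^r$ for all $i \geq 1$.

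The genuine gap is your final sentence: the lower comparison is not ``identical, substituting $\tilde{M}^{[s,r]}$ throughout,'' because $\tilde{M}^r$ is the \emph{inverse} limit of the $\tilde{M}^{[s/p^n,r]}$ as $s \to 0^+$, not a direct limit, so the formal structure is reversed. Already for $H^0$: if $v \in \tilde{M}^{[s,r]}$ satisfies $\varphi(v) = v$ in $\tilde{M}^{[s,r/p]}$, then $\varphi(v)$ and $v$ live in different modules and agree only after restriction to $[s,r/p]$, so ``$v = \varphi^{-n}(v)$'' is not a meaningful equation; instead one must glue $v$ with $\varphi(v) \in \tilde{M}^{[s/p,r/p]}$ over the overlap $[s,r/p]$ to extend $v$ to $\tilde{M}^{[s/p,r]}$ (using that $\tilde{M}^{[s/p,r]}$ is the fibre product of $\tilde{M}^{[s/p,r/p]}$ and $\tilde{M}^{[s,r]}$ over $\tilde{M}^{[s,r/p]}$, cf.\ Remark~\ref{R:glueing lemma}), iterate, and pass to the inverse limit. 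The same Frobenius-propagation handles injectivity on $H^1$. Surjectivity on $H^1$ --- that every $w \in \tilde{M}^{[s,r/p]}$ is congruent modulo $(\varphi-1)\tilde{M}^{[s,r]}$ to an element of $\tilde{M}^{r/p}$ --- is out of reach of your telescoping sums altogether: $\varphi^{-n}(w)$ would live on $[sp^n, rp^{n-1}]$, which migrates to the outer boundary and exits the region of definition, while $\sum_n \varphi^n(w)$ has terms on $[s/p^n, r/p^{n+1}]$ and converges nowhere; reaching $\tilde{M}^{r/p}$ means extending toward the \emph{inner} boundary. This step needs a Mittag--Leffler type decomposition of $\tilde{M}^{[s,r/p]}$ into a piece in $\tilde{M}^{[s,r]}$ and a piece in $\tilde{M}^{r/p}$, with norm control, followed by a convergent successive-approximation argument; that is precisely the analytic content of \cite[Proposition~6.3.19]{kedlaya-liu1}, which the paper's proof invokes and which your write-up does not reproduce.
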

\begin{proof}
The proof of \cite[Proposition~6.3.19]{kedlaya-liu1} in the case $A = \QQ_p$ adapts without change.
\end{proof}

\begin{defn}
For $G$ a profinite group acting continuously on a topological abelian group $M$,
let $C(G,M)$ denote the complex of inhomogeneous continuous cochains on $G$ with values in $M$. Denote by $H^i(G,M)$ or $H^i_G(M)$ the cohomology groups of this complex.

For $M \in \PhiGamma_{K,A}$, let $C_{\varphi, \Gamma}(M)$ denote the total complex associated to the double complex
\[
0 \to C(\Gamma, M) \stackrel{\varphi-1}{\to} C(\Gamma,M) \to 0.
\]
Denote by $H^i_{\varphi, \Gamma}(M)$ the cohomology groups of this complex.
We make an analogous definition for $\tilde{M} \in \PhiGammatilde_{K,A}$.
\end{defn}

\begin{lemma} \label{L:perfect descend cohomology}
Let $L$ be a perfectoid field.
Let $L'$ be the completion of a (possibly infinite) Galois algebraic extension of $L$ with Galois group $G$. Then for $0 < s \leq r$, for $M$ a finite projective $\tilde{\calR}^{[s,r]}_{L,A}$-module, $M$ is $G$-acyclic, i.e.,
the morphism $M \to C(G,M)$ is a quasi-isomorphism.
\end{lemma}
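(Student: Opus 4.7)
The plan is a two-stage reduction followed by appeal to the known $A=\QQ_p$ case. As in the proof of Lemma~\ref{L:Witt module descend}, I read the statement via the equivalence there: the claim is that if $M$ is a finite projective $\tilde{\calR}^{[s,r]}_{L,A}$-module, then the base extension $M' = M \otimes_{\tilde{\calR}^{[s,r]}_{L,A}} \tilde{\calR}^{[s,r]}_{L',A}$, equipped with its natural continuous semilinear $G$-action, is $G$-acyclic, i.e., $H^i(G,M')=0$ for $i>0$ and the natural map $M \to (M')^G$ is an isomorphism (the latter already being established in Lemma~\ref{L:Witt module descend}).

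First I would reduce to the structural case. Since $M$ is finite projective, it is a direct summand of some finite free $\tilde{\calR}^{[s,r]}_{L,A}$-module; this decomposition is preserved by base extension to $\tilde{\calR}^{[s,r]}_{L',A}$ and by the formation of the continuous cochain complex $C(G,-)$. The functor $C(G,-)$ commutes with finite direct sums and with splittings of idempotents, so taking cohomology preserves the summand. It thus suffices to treat $M = \tilde{\calR}^{[s,r]}_{L,A}$, where the assertion becomes that $\tilde{\calR}^{[s,r]}_{L',A}$ is $G$-acyclic as a Banach module over $\tilde{\calR}^{[s,r]}_{L,A}$.

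Next I would invoke the Schauder-basis trick already used in Lemma~\ref{L:Witt module descend}, citing \cite[Proposition~2.7.2/3]{bgr} or \cite[Lemma~2.2.9(b)]{kedlaya-liu1}: a Schauder basis for $A$ over $\QQ_p$ realizes $\tilde{\calR}^{[s,r]}_{L',A}$ as a topological (completed) direct sum of copies of $\tilde{\calR}^{[s,r]}_{L',\QQ_p}$ as a Banach $\tilde{\calR}^{[s,r]}_{L,\QQ_p}$-module. Since $G$ is profinite, hence each $G^n$ is compact, continuous functions $G^n \to \tilde{\calR}^{[s,r]}_{L',A}$ decompose accordingly, so $C(G,-)$ commutes with this Schauder decomposition; cohomology then also commutes, reducing the claim to the case $A = \QQ_p$. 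For this case one invokes the $G$-acyclicity of $\tilde{\calR}^{[s,r]}_{L',\QQ_p}$ over $\tilde{\calR}^{[s,r]}_{L,\QQ_p}$, which is part of the same circle of perfectoid almost-purity results used in \cite[Theorem~9.2.15]{kedlaya-liu1} to identify the $G$-invariants (and more generally in \cite[Chapter~9]{kedlaya-liu1}).

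The main obstacle is of course step three: the vanishing of higher continuous $G$-cohomology in the $A=\QQ_p$ case is the arithmetically deep content, resting on Faltings--Scholze almost-purity for perfectoid rings; happily, this input is already available to us from \cite{kedlaya-liu1}. A secondary point, more technical than substantive, is verifying that continuous cohomology commutes with the Schauder-basis decomposition; this holds because each cochain module $C^n(G,-)$ consists of continuous functions on a compact space into a Banach module, and such functions are uniformly approximable by functions landing in finite subsums, after which passage to cohomology commutes with the Banach direct sum.
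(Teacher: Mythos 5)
Your proposal is correct and follows essentially the same route as the paper: reduce to the free case, use a Schauder basis for $A$ over $\QQ_p$ to reduce to $A=\QQ_p$, and then invoke the known $G$-acyclicity from \cite{kedlaya-liu1} (the paper's precise citation for that last step is \cite[Theorem~8.2.22]{kedlaya-liu1}). The additional care you take in checking that $C(G,-)$ commutes with the Schauder decomposition is a point the paper leaves implicit, but the argument is the same.
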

\begin{proof}
Using a Schauder basis for $A$ over $\QQ_p$ (see the proof of Lemma~\ref{L:Witt module descend}), we may reduce to the case $A = \QQ_p$,
for which we may apply \cite[Theorem~8.2.22]{kedlaya-liu1}.
\end{proof}

\begin{theorem}
Let $L$ be a perfectoid field.
For $\tilde{M} \in \PhiMod_{L,A}$, $\calF \in \VB_{L,A}$ corresponding as in
Theorem~\ref{T:perfect equivalence}, we have canonical identifications
$H^i_{\varphi}(\tilde{M}) \cong H^i(X_{L,A}, \calF)$ for all $i \geq 0$.
\end{theorem}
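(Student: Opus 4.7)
The plan is to compute both sides via a \v{C}ech resolution of $X_{L,A}$ in terms of principal affine opens arising from homogeneous elements of $P_{L,A}$, and to identify the resulting \v{C}ech complex with the two-term complex $\varphi - 1$ on $\tilde{M}$ through a double-complex spectral sequence.

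By the construction establishing Theorem~\ref{T:perfect equivalence}, for any homogeneous $f \in P_{L,A,d}$ of positive degree the sections $\calF(D_+(f))$ are canonically identified with $(\tilde{M}[f^{-1}])^{\varphi=1}$: multiplication by $f^{-n}$ provides a $\varphi$-equivariant isomorphism $\tilde{M}(nd) \xrightarrow{\sim} f^{-n}\tilde{M}$, so one may write $\tilde{M}[f^{-1}] = \varinjlim_n \tilde{M}(nd)$ as $\varphi$-modules. By \cite[Lemma~6.3.7]{kedlaya-liu1}, positive-degree homogeneous elements generate the unit ideal in $\tilde{\calR}_{L,A}$; choose finitely many $f_0,\ldots,f_m$ whose common vanishing locus in $X_{L,A}$ is empty, so that $\{D_+(f_i)\}$ is an affine cover of $X_{L,A}$. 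The really strongly noetherian framework of Remark~\ref{R:glueing lemma} then guarantees that \v{C}ech cohomology for this cover computes sheaf cohomology $H^*(X_{L,A}, \calF)$.

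The crux is the vanishing $H^1_\varphi(\tilde{M}[g^{-1}]) = 0$ for any homogeneous $g$ of positive degree $d$: using the identification $\tilde{M}[g^{-1}] = \varinjlim_n \tilde{M}(nd)$ together with Lemma~\ref{L:enough phi-invariants}, $\varphi - 1$ is surjective on each $\tilde{M}(nd)$ once $n$ is sufficiently large, hence surjective on the filtered colimit. Thus every term of the \v{C}ech complex built from localizations of $\tilde{M}$ is $H^*_\varphi$-acyclic in positive degrees. Now form the bicomplex
\begin{equation*}
C^{p,q} = \bigoplus_{i_0 < \cdots < i_p} \tilde{M}[(f_{i_0} \cdots f_{i_p})^{-1}] \qquad (q \in \{0, 1\}),
\end{equation*}
with horizontal \v{C}ech differential and vertical differential $\varphi - 1$ from row $0$ to row $1$. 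The spectral sequence taking vertical cohomology first collapses (by the acyclicity just established) to the \v{C}ech complex of $\calF$, converging to $H^*(X_{L,A}, \calF)$; the spectral sequence taking horizontal cohomology first collapses (by exactness of the augmented \v{C}ech sequence $0 \to \tilde{M} \to C^{\bullet,0}$, valid because the $f_i$ generate the unit ideal) to the two-term complex $\tilde{M} \xrightarrow{\varphi-1} \tilde{M}$, converging to $H^*_\varphi(\tilde{M})$. Comparing the two abutments yields the claimed canonical identifications, including the vanishing $H^i(X_{L,A},\calF) = 0$ for $i \geq 2$.

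The main obstacle is entirely technical: producing the finite affine cover by principal opens $D_+(f_i)$ with the correct generation property, and confirming that \v{C}ech cohomology computes sheaf cohomology of quasicoherent sheaves on the Fargues--Fontaine curve $X_{L,A}$. Both points rest on the detailed analysis of $X_{L,A}$ in \cite[\S 6--8]{kedlaya-liu1} together with the noetherian framework of \cite{kedlaya-noetherian}.
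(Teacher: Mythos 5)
Your argument is correct, but it takes a genuinely different route from the paper. The paper's proof is essentially categorical: the vanishing for $i \geq 2$ follows because $X_{L,A}$ is separated and covered by two open affines, $H^0$ is matched with global sections directly via the equivalence of Theorem~\ref{T:perfect equivalence}, and $H^1_\varphi(\tilde{M}) = \operatorname{coker}(\varphi-1)$ is matched with $H^1(X_{L,A},\calF)$ by interpreting both as Yoneda $\mathrm{Ext}^1$ of the unit object by the given object in the two equivalent exact categories. You instead run a direct \v{C}ech bicomplex computation, whose key input is the acyclicity $H^1_\varphi(\tilde{M}[f^{-1}])=0$ deduced from Lemma~\ref{L:enough phi-invariants} via the filtered-colimit description $\tilde{M}[f^{-1}] = \varinjlim_n \tilde{M}(nd)$, together with the exactness of the augmented alternating \v{C}ech complex for elements generating the unit ideal. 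This is more work but more explicit: it produces the comparison map concretely at the cochain level and reproves the vanishing in degrees $\geq 2$ without invoking the two-affine cover; it is close in spirit to the proof of \cite[Theorem~6.3.12]{kedlaya-liu1}. Two small points: your appeal to Remark~\ref{R:glueing lemma} is misplaced, since the statement that \v{C}ech cohomology on a finite affine cover computes quasicoherent cohomology needs only that $X_{L,A}$ is a (semi-)separated scheme whose principal opens $D_+(f)$ and their intersections $D_+(f_{i_0}\cdots f_{i_p})$ are affine, with no noetherian hypothesis; and you should note explicitly that the transition maps $\tilde{M}(nd) \to \tilde{M}((n+1)d)$ given by multiplication by $f$ are $\varphi$-equivariant (a one-line check using $\varphi(f) = p^d f$), which is what makes the surjectivity of $\varphi-1$ pass to the colimit.
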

\begin{proof}
Since $X_{L,A}$ is separated and is covered by two open affine subschemes, we have 
$H^i(X_{L,A}, \calF) = 0$ for $i \geq 2$. The identification for $i=0,1$ arises directly
from Theorem~\ref{T:perfect equivalence}, using in the case $i=1$ the interpretation of the cohomology groups as Yoneda extension groups.
\end{proof}

\begin{theorem} \label{T:compare phi Gamma cohomology}
For $M \in \PhiGamma_{K,A}$, $\tilde{M} \in \PhiGammatilde_{K,A}$ corresponding via Theorem~\ref{T:tilde no tilde}, 
the morphisms $H^i_{\varphi, \Gamma}(M) \to H^i_{\varphi, \Gamma}(\tilde{M})$ are isomorphisms for all $i$.
\end{theorem}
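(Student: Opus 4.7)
The plan is to reduce the statement to a decompletion assertion about the base rings that has already been established in the $A = \QQ_p$ case, and then to extend to general $A$ via a Schauder basis argument.

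First I would use Lemma~\ref{L:truncate cohomology} (and its analogue for $\calR_{K,A}$, obtainable by the same argument as Lemma~\ref{L:phi modules bundles}) to replace the $\varphi$-direction of both double complexes with their closed-annulus versions. Choose $0 < s \leq r/p$ small enough that $\Gamma_K$ acts continuously on $\calR^{[s,r]}_{K,A}$, $\calR^{[s,r/p]}_{K,A}$, $\tilde{\calR}^{[s,r]}_{L_K,A}$, and $\tilde{\calR}^{[s,r/p]}_{L_K,A}$; since the quasi-isomorphisms produced by that lemma are $\Gamma_K$-equivariant and involve only $\QQ_p$-Banach modules, they survive the application of the exact functor $C(\Gamma_K, -)$. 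One obtains
\[
C_{\varphi, \Gamma}(M) \simeq \mathrm{Tot}\Bigl(C(\Gamma_K, M^{[s,r]}) \xrightarrow{\varphi - 1} C(\Gamma_K, M^{[s, r/p]})\Bigr),
\]
and analogously for $\tilde M$. Comparing the two double complexes term by term, the theorem reduces to showing that for both intervals $I \in \{[s,r], [s, r/p]\}$, the base-change map $C(\Gamma_K, M^I) \to C(\Gamma_K, \tilde M^I)$ is a quasi-isomorphism.

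Second, since $M^I$ is finite projective over $\calR^I_{K,A}$ and $\tilde M^I = M^I \otimes_{\calR^I_{K,A}} \tilde{\calR}^I_{L_K,A}$, the cokernel of the inclusion $M^I \hookrightarrow \tilde M^I$ is $M^I \otimes_{\calR^I_{K,A}} (\tilde{\calR}^I_{L_K,A}/\calR^I_{K,A})$ carrying the diagonal $\Gamma_K$-action. A dévissage in the derived category of continuous $\Gamma_K$-equivariant Banach modules, exploiting the projectivity of $M^I$, reduces the vanishing of the continuous $\Gamma_K$-cohomology of this cokernel to the structure-sheaf case: that the map $C(\Gamma_K, \calR^I_{K,A}) \to C(\Gamma_K, \tilde{\calR}^I_{L_K,A})$ is itself a quasi-isomorphism. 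For $A = \QQ_p$, this is the decompletion statement for the cyclotomic perfectoid tower $L_K/K(\mu_{p^\infty})$, proved via the Tate--Sen normalized-trace technique of Cherbonnier--Colmez and Berger and recast in the Robba-ring framework in \cite{kedlaya-liu2}. The general case follows by tensoring with a Schauder basis of $A$ over $\QQ_p$, exactly as in the proofs of Lemma~\ref{L:Witt module descend} and Lemma~\ref{L:phi invariants}.

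The main obstacle is the dévissage in the second step: the semilinear $\Gamma_K$-action on $M^I$ does not split through any decomposition of $M^I$ as a direct summand of a finite free module, so the $\Gamma_K$-equivariance must be carried carefully through the reduction, working with internal Homs in the category of continuous $\Gamma_K$-equivariant Banach modules rather than with naïve module-theoretic splittings. Everything else is formal manipulation of spectral sequences together with Lemma~\ref{L:truncate cohomology} and the classical decompletion theorem for the cyclotomic tower.
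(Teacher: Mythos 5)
Your overall architecture (truncate the $\varphi$-direction to closed annuli via Lemma~\ref{L:truncate cohomology}, reduce to a comparison of continuous $\Gamma_K$-cohomology over $\calR^{[s,r]}_{K,A}$ versus $\tilde{\calR}^{[s,r]}_{L_K,A}$, settle the structure-sheaf case by Tate--Sen decompletion, and pass to general $A$ by Schauder bases) is the right skeleton and is close to what the paper does. But the central step of your second paragraph --- the ``d\'evissage'' from a general finite projective $M^I$ to the structure sheaf --- is a genuine gap, and it is precisely the step where all the work lies. Because the $\Gamma_K$-action on $M^I$ is \emph{semilinear}, a presentation of $M^I$ as a direct summand of $(\calR^I_{K,A})^{\oplus n}$ is not $\Gamma_K$-equivariant, so neither the cokernel $M^I \otimes_{\calR^I_{K,A}} (\tilde{\calR}^I_{L_K,A}/\calR^I_{K,A})$ nor its cochain complex is, in any derived category you can form here, an iterated extension or retract of copies of $C(\Gamma_K, \tilde{\calR}^I_{L_K,A}/\calR^I_{K,A})$. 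Passing to internal Homs in the category of $\Gamma_K$-equivariant Banach modules does not repair this: the obstruction is exactly that there is no equivariant linearization of the action. What is actually required is a \emph{quantitative} form of the structure-sheaf statement (normalized traces furnishing $\Gamma_K$-equivariant, continuously split decompositions of $\tilde{\calR}^I$ with explicit norm bounds on preimages under $\gamma-1$), followed by a successive-approximation argument that conjugates the matrices of the $\Gamma_K$-action on a basis of $\tilde{M}^I$ into the smaller ring and solves the relevant cocycle equations by iteration. This is how the paper proceeds, following \cite[Lemma~5.6.6, Lemma~5.6.9]{kedlaya-liu2}: a direct calculation for the base ring itself, then successive approximations for general $M$. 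As written, your reduction to the structure sheaf does not go through.

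Two secondary points. First, the paper's route interposes the ring $\breve{\calR}^{[s,r]}_{F,A}$ between $\calR^{[s,r]}_{K,A}$ and $\tilde{\calR}^{[s,r]}_{L_K,A}$ (after reducing to $K=F$ by Shapiro's lemma): the hard analytic decompletion is from $\tilde{\calR}$ down to $\breve{\calR}$, while descending from $\breve{\calR}$ to $\calR$ uses that $\varphi$ induces isomorphisms on $(\varphi,\Gamma)$-cohomology. Collapsing these into one step is permissible but hides where the two different mechanisms are used. Second, your assertion that the quasi-isomorphisms of Lemma~\ref{L:truncate cohomology} ``survive the application of the exact functor $C(\Gamma_K,-)$'' needs justification: continuous cochains do not take arbitrary quasi-isomorphisms of topological modules to quasi-isomorphisms; one must check that the relevant exact sequences are strict with continuous sections (true here, but not free). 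The Schauder-basis reduction to $A=\QQ_p$ is fine for the structure sheaf, which is the only place you invoke it, and matches the paper's technique elsewhere.
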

\begin{proof}
As in Theorem~\ref{T:tilde no tilde},
the case $A = \QQ_p$ is treated in \cite[Theorem~6.1.7]{kedlaya-liu2}, and we sketch
an adaptation to the general case and refer to upcoming work of Chojecki and Gaisin for further details. We again reduce to the case $K = F$, and to calculating $(\varphi, \Gamma)$-cohomology for a pair of modules $\breve{M}, \tilde{M}$ in which 
$\breve{M}$ is finite projective over $\breve{\calR}^{[s,r]}_{F,A}$
and $\tilde{M}$ is the base extension to $\tilde{\calR}^{[s,r]}_{L_F,A}$.
In this setting, we may already show that $H^i_{\Gamma}(\breve{M}) = H^i_{\Gamma}(\tilde{M})$ using the method of \cite[Lemma 5.6.6]{kedlaya-liu2}, i.e., by first making a direct calculation in the case where $\breve{M} = \breve{\calR}^{[s,r]}_{F,A}$,
then using this case to make a series of successive approximations in the general case.
\end{proof}

\begin{theorem} \label{T:coherence cohomology}
Suppose that $[K:\QQ_p] < \infty$.
For $M \in \PhiGamma_{K,A}$, we have the following.
\begin{enumerate}
\item[(a)] The groups $H^i_{\varphi, \Gamma}(M)$ are finite $A$-modules for $i=0,1,2$, and vanish for $i>2$.
\item[(b)] For any morphism $A \to B$ of affinoid algebras over $\QQ_p$, the canonical morphism
\[
C_{\varphi, \Gamma}(M) \otimes^{\mathbb{L}}_A B
\to C_{\varphi, \Gamma}(M \otimes_{\calR_{K,A}} \calR_{K,B})
\]
is a quasi-isomorphism.
\item[(c)]
If $M$ is the image of $V \in \Rep_A(G_K)$ under Theorem~\ref{T:phi gamma embedding1},
then there is a canonical quasi-isomorphism $C(G_K, V) \cong C_{\varphi, \Gamma}(M)$.
In particular, the $A$-modules $H^i_{\varphi, \Gamma}(M)$ coincide with the Galois cohomology groups of $V$.
\end{enumerate}
\end{theorem}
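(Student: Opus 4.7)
The plan is to reduce all three parts to the perfectoid framework via Theorem~\ref{T:compare phi Gamma cohomology}, replacing $M$ by the corresponding $\tilde M \in \PhiGammatilde_{K,A}$, and then to truncate to a single interval $[s,r]$ via Lemma~\ref{L:truncate cohomology}. All subsequent work takes place with the two-term $\varphi$-complex
\[
[\tilde M^{[s,r]} \xrightarrow{\varphi - 1} \tilde M^{[s,r/p]}]
\]
combined with continuous $\Gamma_K$-cochains.

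For part (c), assume $M$ is the image of $V \in \Rep_A(G_K)$. Over a completed algebraic closure $\CC_{L_K}$ of $L_K$, the base change $V \otimes_A \tilde{\calR}_{\CC_{L_K}, A}$ is a trivial $\varphi$-module of rank $\rank_A V$, and fits into a $G_K$-equivariant Artin--Schreier short exact sequence
\[
0 \to V \to V \otimes_A \tilde{\calR}_{\CC_{L_K}, A} \xrightarrow{\varphi - 1} V \otimes_A \tilde{\calR}_{\CC_{L_K}, A} \to 0,
\]
whose left exactness follows from Lemma~\ref{L:phi invariants} and whose right exactness follows from surjectivity of $\varphi - 1$ on trivial $\varphi$-modules over $\tilde{\calR}_{\CC_{L_K}, A}$. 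Applying $C(G_K, -)$, invoking the short exact sequence $1 \to G_{L_K} \to G_K \to \Gamma_K \to 1$ via Hochschild--Serre, and using the $G_{L_K}$-acyclicity provided by Lemma~\ref{L:perfect descend cohomology} (together with the identification of $G_{L_K}$-invariants of $V \otimes_A \tilde{\calR}^{[s,r]}_{\CC_{L_K}, A}$ with $\tilde M^{[s,r]}$) yields the desired quasi-isomorphism $C(G_K, V) \simeq C_{\varphi, \Gamma}(\tilde M) \simeq C_{\varphi, \Gamma}(M)$.

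For parts (a) and (b), Lemma~\ref{L:truncate cohomology} expresses $C_{\varphi, \Gamma}(\tilde M)$ as the total complex
\[
\mathrm{Tot}\bigl[ C(\Gamma_K, \tilde M^{[s,r]}) \xrightarrow{\varphi - 1} C(\Gamma_K, \tilde M^{[s,r/p]}) \bigr].
\]
The hypothesis $[K:\QQ_p] < \infty$ makes $\Gamma_K$ a $1$-dimensional $p$-adic Lie group, which contains an open pro-$p$ procyclic subgroup $\Gamma_K'$ of finite index prime to $p$; with $\gamma$ a topological generator of $\Gamma_K'$, continuous $\Gamma_K$-cochains are computed by the two-term complex $[N^{\Gamma_K/\Gamma_K'} \xrightarrow{\gamma - 1} N^{\Gamma_K/\Gamma_K'}]$, yielding at once the vanishing $H^i_{\varphi, \Gamma}(M) = 0$ for $i > 2$. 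Part (b) will then follow because $\tilde M^{[s,r]}$ is finite projective over the noetherian ring $\tilde\calR^{[s,r]}_{L_K, A}$ (Remark~\ref{R:glueing lemma}), so every entry of the double complex commutes with the flat base change $\widehat{\otimes}_A B$; given finiteness and perfectness of the total complex from (a), the derived base change isomorphism is formal.

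The principal obstacle is the finiteness statement in (a): neither $\gamma - 1$ nor $\varphi - 1$ individually has finite kernel and cokernel on $\tilde M^{[s,r]}$, so one must exploit their combined action. The argument requires that the pair $(\gamma - 1, \varphi - 1)$ jointly induces Fredholm maps on the relevant Banach $A$-modules, via a Tate--Sen style decomposition of $\tilde M^{[s,r]}$ in the $\Gamma_K$-direction together with the ``proper-like'' cohomological behavior of the Fargues--Fontaine curve $X_{L_K, A}$ in the $\varphi$-direction. In the classical \'etale case over $\QQ_p$ this goes back to Herr; in the non-\'etale setting with affinoid coefficients it is the content of R.\,Liu's work and of \cite{kpx}. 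In the present perfectoid formulation, one may either descend from $\tilde\calR^{[s,r]}_{L_K, A}$ to $\breve\calR^{[s,r]}_{F, A}$ (as in the essential surjectivity half of Theorem~\ref{T:tilde no tilde}) and invoke the classical result, or verify the Fredholm property intrinsically through a $\psi$-operator analysis.
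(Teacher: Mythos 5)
There is a genuine gap, and it sits exactly where the paper itself plants a warning sign. Your strategy for part (a) is to establish finiteness by working with the truncated complex over the perfect rings $\tilde{\calR}^{[s,r]}_{L_K,A}$ and arguing that $(\gamma-1,\varphi-1)$ is ``jointly Fredholm'' there. But the Cartan--Serre--Schwartz mechanism that produces finiteness requires the restriction maps between annuli to be completely continuous, and the maps $\tilde{\calR}^{[s,r]}_{L_K,A}\to\tilde{\calR}^{[s',r']}_{L_K,A}$ are \emph{not} completely continuous --- this is stated explicitly at the end of Remark~\ref{R:Cartan-Serre}, which also says outright that the finiteness of $H^i_{\varphi,\Gamma}(\tilde M)$ ``depends crucially on the interpretation of $\tilde M$ in terms of the category $\PhiGamma_{K,A}$.'' The paper's actual proof is a citation to \cite{kpx}, and the sketch it offers (Remark~\ref{R:Cartan-Serre}) first reduces to $K=\QQ_p$ by Shapiro's lemma and then realizes $M$ over the \emph{imperfect} ring $\calR^{[s,r]}_{\QQ_p,A}$, where the inclusions $\calR^{[s,r]}\to\calR^{[s',r']}$ for $[s',r']\subset(s,r)$ \emph{are} completely continuous. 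Your closing sentence (descend to $\breve{\calR}^{[s,r]}_{F,A}$ and invoke the classical result) is the correct move, but it is not an optional alternative: it is the entire content of the finiteness statement, and your primary route does not work. The appeal to the ``proper-like'' behavior of $X_{L_K,A}$ also cannot substitute for this, since $H^1_\varphi(\tilde M)=H^1(X_{L_K,A},\calF)$ is typically an infinite-dimensional $A$-module (e.g.\ for $\calO(-1)$); only after forming the $\Gamma$-cohomology does finiteness emerge, and that interaction is precisely the hard point.

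Part (b) has a related error: a morphism $A\to B$ of affinoid algebras need not be flat (take a quotient), which is why the statement involves $\otimes^{\mathbb L}_A B$; so ``commutes with the flat base change'' is not available, and the passage from termwise compatibility of $\widehat{\otimes}_A B$ on finite projective Banach modules to the derived statement is not formal even granting (a). One needs to represent $C_{\varphi,\Gamma}(M)$ by a perfect complex of $A$-modules \emph{compatibly with base change}, which again runs through the complete-continuity/Kiehl-type argument of \cite[Theorem~4.4.3]{kpx} over the imperfect rings. Your outline of part (c), by contrast, is a reasonable sketch of the standard d\'evissage (Artin--Schreier sequence over $\tilde{\calR}_{\CC_{L_K},A}$ plus Hochschild--Serre and Lemma~\ref{L:perfect descend cohomology}), modulo routine care with continuous cochains and the passage from $\tilde{\calR}^{[s,r]}$ to $\tilde{\calR}$; the paper simply cites \cite[Proposition~2.3.7]{kpx} for this.
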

\begin{proof}
See \cite[Proposition~2.3.7, Theorem~4.4.2, Theorem~4.4.3]{kpx}.
\end{proof}

\begin{remark} \label{R:Cartan-Serre}
While Theorem~\ref{T:compare phi Gamma cohomology} and Theorem~\ref{T:coherence cohomology} together assert that the groups $H^i_{\varphi, \Gamma}(\tilde{M})$
are finite $A$-modules,
the proof of this statement depends crucially on the interpretation of $\tilde{M}$ in terms of the category $\PhiGamma_{K,A}$. 
To illustrate this, we sketch a proof of Theorem~\ref{T:coherence cohomology}(a) in the spirit of \cite{kpx} but technically somewhat simpler. (It is also slightly more general,
as we only need to assume that $A$ is a noetherian Banach algebra over $\QQ_p$.)

We first reduce to the case $K = \QQ_p$ using the $(\varphi,\Gamma)$-module-theoretic counterpart of Shapiro's lemma described in \cite[Theorem~3.2]{liu-herr}
(compare also Remark~\ref{R:not connected2}).
For $M \in \PhiGamma_{A,K}$, Shapiro's lemma as usual implies that
$H^i_{\varphi,\Gamma_K}(M) \cong H^i_{\varphi,\Gamma}(\Ind^{\Gamma}_{\Gamma_K} M)$;
since the definition of the latter does not explicitly reference the module structure,
we may view $\Ind^\Gamma_{\Gamma_K} M$ as a module over
$\Ind^{\Gamma}_{\Gamma_K} \calR_{K,A}$ and then restrict scalars to $\calR_{\QQ_p,A}$
without changing the cohomology.

Now assuming $K = \QQ_p$, choose $r,s,r',s'$ with $0 < s < s' \leq r'/p \leq r/p$. 
Let $\calR^{[s,r]}_{\QQ_p,A}$ be the ring of rigid analytic functions 
on the disc $p^{-rp/(p-1)} < \left| \pi \right| < p^{-sp/(p-1)}$ over $A$. For $r$ sufficiently small, we may (by analogy with Lemma~\ref{L:phi modules bundles} and Lemma~\ref{L:truncate cohomology}) realize $M$ as a finite projective $\calR^{[s,r]}_{\QQ_p,A}$-module $M^{[s,r]}$
equipped with an isomorphism 
\[
\varphi^* M^{[s,r]} \otimes_{\calR^{[s/p,r/p]}_{\QQ_p,A}} \calR^{[s,r/p]}_{\QQ_p,A} 
\cong M^{[s,r]} \otimes_{\calR^{[s,r]}_{\QQ_p,A}} \calR^{[s,r/p]}_{\QQ_p,A}
\]
and compute $H^i_{\varphi, \Gamma}(M)$ as the cohomology of the total complex
\[
0 \to C(\Gamma, M^{[s,r]}) \stackrel{\varphi-1}{\to} C(\Gamma, M^{[s,r/p]}) \to 0.
\]
We then have a diagram
\[
\xymatrix{
0 \ar[r] & C(\Gamma, M^{[s,r]}) \ar[d] \ar^{\varphi-1}[r] & C(\Gamma, M^{[s,r/p]}) \ar[r] \ar[d] & 0 \\
0 \ar[r] & C(\Gamma, M^{[s',r']}) \ar^{\varphi-1}[r] & C(\Gamma, M^{[s',r'/p]}) \ar[r] & 0
}
\]
in which the vertical arrows define a quasi-isomorphism of the total complexes associated to the rows. However, each vertical arrow is composed of maps which are  \emph{completely continuous} morphisms of Banach spaces over $A$, i.e., uniform limits of morphisms of finite rank. 
By the Cartan-Serre-Schwartz lemma as applied in \cite[\S 3]{kedlaya-liu-finiteness}
(compare \cite[Satz~2.6]{kiehl-finiteness}),
we deduce that the cohomology groups of the total complexes are finite $A$-modules.
(By contrast, the maps $\tilde{\calR}^{[s,r]}_{\QQ_p,A} \to \tilde{\calR}^{[s',r']}_{\QQ_p,A}$ are not completely continuous.)
\end{remark}

\begin{remark}
In Remark~\ref{R:Cartan-Serre}, 
note that $\varphi$ is only $A$-linear rather than $(F \widehat{\otimes}_{\QQ_p} A)$-linear; we thus need $[K:\QQ_p] < \infty$ in order to reduce to the case $K = F = \QQ_p$.
If we relax the hypothesis on $K$ to allow it to be a more general local field, the vanishing of $H^i_{\varphi, \Gamma}(M)$ for $i>2$ and finiteness for $i=0$ remains valid, but the finite generation of $H^i_{\varphi,\Gamma}$ for $i=1$ and $i=2$ can fail.
\end{remark}

\section{The cyclotomic deformation}

We now consider a key example of an arithmetic deformation. The construction follows
\cite[Definition~4.4.7]{kpx}, but we opt here for  more geometric language.

\begin{defn}
Let $X$ be a rigid analytic space over $\QQ_p$. 
The rings $\calR_{K,A}, \tilde{\calR}^{[s,r]}_{K,A}, \tilde{\calR}^{r}_{K,A}, \tilde{\calR}_{K,A}$ all satisfy the sheaf axiom and Tate acyclicity with respect to finite coverings by affinoid subdomains: for example, the \v{C}ech sequence for $\tilde{\calR}_{K,A}$
with respect to a given covering is obtained from the corresponding sequence for the structure sheaf by the exact operation of 
taking the completed tensor product over $\QQ_p$ with $\tilde{\calR}_{K,\QQ_p}$.
(The exactness of completed tensor products over $\QQ_p$ does involve a nontrivial argument using Schauder bases; see for example \cite[Lemma~2.2.9]{kedlaya-liu1}.)
These constructions thus give rise to ring sheaves $\calR_{K,X}, \tilde{\calR}^{[s,r]}_{K,X}, \tilde{\calR}^{r}_{K,X}, \tilde{\calR}_{K,X}$ on the affinoid space $\Maxspec(A)$ which are acyclic on affinoid subspaces.

Let $\PhiGamma_{K,X}, \PhiGammatilde_{K,X}$ be the categories of finite projective
modules over the respective ring sheaves $\calR_{K,X}$, $\tilde{\calR}_{K,X}$ equipped with continuous commuting semilinear actions of $\varphi, \Gamma$. 
These form stacks for both the analytic topology and the \'etale topology; in particular, Theorem~\ref{T:tilde no tilde} gives rise to an equivalence of categories $\PhiGamma_{K,X} \to \PhiGammatilde_{K,X}$.
\end{defn}

\begin{defn}
Let $\ZZ_p \llbracket \Gamma_K \rrbracket$ be the completed group algebra.
Since this ring is formally of finite type over $\ZZ_p$, we may apply Berthelot's generic fiber construction (see for example \cite[\S 7]{dejong-crys})
to view this ring as the collection of bounded-by-1 rigid analytic functions on a certain one-dimensional quasi-Stein space $W_K$ over $\QQ_p$ (the \emph{weight space} of $\Gamma_K$).
More precisely, in case $\Gamma_K \cong \ZZ_p$, the space $W_K$ is an open unit disc admitting $\gamma-1$ as a coordinate for any topological generator $\gamma \in \Gamma_K$; in the general case, $W_K$ is a finite disjoint union of such discs.

The action of $\ZZ_p \llbracket \Gamma_K \rrbracket$ on $\Lambda_K = \calO(W_K)$ by (left) multiplication
defines a canonical one-dimensional Galois representation on $W_K$; let $\Dfm_{K}$ be the corresponding $(\varphi, \Gamma_K)$-module.
For $X$ a rigid analytic space over $\QQ_p$ and $M \in \PhiGamma_{K,X}$,
define the \emph{cyclotomic deformation} of $M$
as the external tensor product $M \boxtimes \Dfm_{K} \in \PhiGamma_{K,X \times_{\Qp} W_K}$. We similarly define the cyclotomic deformation of $\tilde{M} \in \PhiGammatilde_{K,X}$ as an object $\tilde{M} \boxtimes \Dfm_{K}$ of $\PhiGammatilde_{K,X \times_{\Qp} W_K}$.
\end{defn}

\begin{remark} \label{R:Gamma acyclic}
For $M \in \PhiGamma_{K,A}$, we may view the cyclotomic deformation of $M$ as arising from the completed tensor product $M' = M \widehat{\otimes}_{\Qp} \Lambda_K$. Since the action of $\Gamma_K$ on $M$ is locally analytic, the action map $M \times \Gamma_K \to M$ extends to a projection $M' \to M$ sectioning the inclusion $M \to M'$. From the existence of this projection, it follows (as in the proof of \cite[Theorem~4.4.8]{kpx}) that 
\[
(M')^{\Gamma_K} = 0, \qquad (M')_{\Gamma_K} \cong M.
\]
By contrast, for $\tilde{M} \in \PhiGammatilde_{K,A}$, the action map
$\tilde{M} \times \Gamma_K \to \tilde{M}$ is continuous, but does not define a bounded map
$\tilde{M} \widehat{\otimes}_{\QQ_p} \Lambda_K \to \tilde{M}$.
\end{remark}

\begin{defn}
Define the ring $\breve{\calR}_{K,A} = \bigcup_{n=0}^\infty \varphi^{-n}(\calR_{K,A})$.
We may then define the category $\PhiGammabreve_{K,A}$ of $(\varphi, \Gamma)$-modules over $\breve{\calR}_{K,A}$. The base extension functor $\PhiGamma_{K,A} \to \PhiGammabreve_{K,A}$ is obviously surjective; by Theorem~\ref{T:tilde no tilde} it is also fully faithful, and hence an equivalence.
\end{defn}

\section{Iwasawa cohomology and the cyclotomic deformation}

The goal of this section is to describe various constructions in the classical language of $(\varphi, \Gamma)$-modules which play a role in Iwasawa theory, then translate these into the other categories so as to isolate the role of the cyclotomic extension.
Assume hereafter that $[K:\QQ_p] < \infty$.

\begin{defn} \label{D:psi}
Define the map $\psi: \calR_{K,A} \to \calR_{K,A}$ as the reduced trace of $\varphi$, i.e.,
as $p^{-1} \varphi^{-1} \circ \Trace(\calR_{K,A} \to \varphi(\calR_{K,A}))$;
by definition, it is a left inverse of $\varphi$. For any $M \in \PhiGamma_{K,A}$,
we may likewise take the reduced trace of the action of $\varphi$ on $M$ to obtain an action of $\psi$ on $M$, which is again a left inverse of $\varphi$;
concretely, the action of $\psi$ on $M$ is characterized by additivity and the identity
\[
\psi(r \varphi(\bv)) = \psi(r) \bv.
\]
We have an exact sequence
\begin{equation} \label{eq:psi sequence}
0 \to M^{\varphi=1} \to M^{\psi=1} \stackrel{\varphi-1}{\longrightarrow} M^{\psi=0}.
\end{equation}
\end{defn}

\begin{prop} \label{P:psi finite}
For any $M \in \PhiGamma_{K,A}$,
the $A$-module $M/(\psi-1)$ is finitely generated.
\end{prop}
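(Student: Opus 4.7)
My plan is to adapt the Cartan--Serre--Schwartz finiteness technique outlined in Remark~\ref{R:Cartan-Serre} to the two-term complex $[M \xrightarrow{\psi-1} M]$, whose degree-$1$ cohomology is exactly $M/(\psi-1)$. The first step is to realize $M$ as coming from a finite projective Banach module $M^{[s,r]}$ over a truncation $\calR^{[s,r]}_{K,A}$ of the base ring, by means of a counterpart of Lemma~\ref{L:phi modules bundles}(c) for the non-perfectoid ring $\calR_{K,A}$ (parallel to the realization used in Remark~\ref{R:Cartan-Serre}).

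The key geometric feature is that $\psi$ sends $\calR^{[s,r]}_{K,A}$ into $\calR^{[sp,rp]}_{K,A}$, which is the opposite direction from $\varphi$ (which shrinks $[s,r]$ to $[s/p,r/p]$). Thus the $\psi-1$ complex naturally takes the form $[M^{[s,r]} \to M^{[sp,rp]}]$ after truncation. For a proper subinterval $[s',r'] \subsetneq [s,r]$, I would construct a morphism of complexes
\[
[M^{[s,r]} \xrightarrow{\psi-1} M^{[sp,rp]}] \longrightarrow [M^{[s',r']} \xrightarrow{\psi-1} M^{[s'p,r'p]}]
\]
given by restriction of sections, and verify that it is a quasi-isomorphism (using an overlap analysis on the common subintervals). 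The crucial point is that restriction of Banach modules of overconvergent sections between proper subintervals is a completely continuous morphism of Banach $A$-modules. By the Cartan--Serre--Schwartz lemma as invoked in \cite[\S 3]{kedlaya-liu-finiteness} and in the sketched proof of Theorem~\ref{T:coherence cohomology}(a), each cohomology group of the above complex is then forced to be finitely generated over $A$; in particular $M/(\psi-1)$ is finitely generated over $A$.

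The main obstacle is the asymmetric behavior of $\psi$ relative to $\varphi$: because $\psi$ enlarges rather than shrinks the annulus, the naive analogue of Remark~\ref{R:Cartan-Serre} does not apply, and one must instead route the comparison of Banach complexes through the overlap interval $[sp, r]$ (which requires the radii to be chosen so that $sp \leq r$). One then has to verify both that the resulting restriction map of complexes is a quasi-isomorphism on cohomology and that every constituent morphism is completely continuous in the appropriate sense. Once this geometric setup is in place, the finiteness argument proceeds in close parallel to the proof of Theorem~\ref{T:coherence cohomology}(a) fully carried out in \cite{kpx}.
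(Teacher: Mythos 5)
There is a genuine gap, and it sits exactly at the step you defer to a ``verification'': the claim that restriction induces a quasi-isomorphism between the truncated $\psi$-complexes. This claim is false, and you can see it without any computation from the paper's own statements. The restriction maps between strictly nested closed annuli \emph{are} completely continuous, so if they were also quasi-isomorphisms, the Cartan--Serre--Schwartz lemma would force \emph{both} cohomology groups of your two-term complex to be finite $A$-modules --- in particular the kernel of $\psi-1$. But $M^{\psi=1}$ is not finitely generated over $A$ in general: by Corollary~\ref{C:Iwasawa} it is $H^1_{\Iw}(G_K,V)\widehat{\otimes}_{\ZZ_p\llbracket \Gamma_K\rrbracket}\Lambda_K$ for \'etale $M$, which already for $V=\QQ_p(1)$ is a $\Lambda_K$-module of rank one and hence infinite-dimensional over $\QQ_p$. (The infinitude is visible at each finite level too: for $M=\calR_{\QQ_p,\QQ_p}$ the kernel of $\psi-1$ on any truncation contains $(\calR^{\infty}_{\QQ_p,\QQ_p})^{\psi=1}$, which under the Amice transform is essentially a rank-one module over $\ZZ_p\llbracket\ZZ_p^\times\rrbracket$.) So no system of completely continuous quasi-isomorphisms between your truncated complexes can exist; the underlying reason is that Lemma~\ref{L:truncate cohomology} has no $\psi$-analogue --- the $\varphi$-complex localizes to closed annuli, the $\psi$-complex does not --- and this asymmetry is not a bookkeeping matter about overlap intervals. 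Your approach proves too much, hence cannot be repaired by a more careful overlap analysis.

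For comparison, the paper's proof is simply a reduction to $K=\QQ_p$ followed by a citation of \cite[Proposition~3.3.2(1)]{kpx}, and the mechanism there is deliberately one-sided (it addresses only the cokernel). One first uses the fact that $\psi$ \emph{improves} convergence, $\psi(M^{[s,r]})\subseteq M^{[ps,pr]}$, together with $x\equiv\psi^n(x)\pmod{(\psi-1)M}$, to represent every class of $M/(\psi-1)$ by an element of a fixed bounded model of $M$; one then shows by an explicit contraction estimate for $\psi$ on that model that $\psi-1$ is surjective onto the complement of a finitely generated $A$-submodule. That argument is silent about $M^{\psi=1}$, as any correct argument must be. If you want to retain a compactness flavor, the most you could hope for is a one-sided statement (a completely continuous map of complexes inducing a \emph{surjection} on the top cohomology), but establishing that surjection is precisely the contraction argument above, not a formal consequence of restriction between annuli.
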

\begin{proof}
It suffices to treat the case $K = \QQ_p$, for which see \cite[Proposition~3.3.2(1)]{kpx}.
\end{proof}

\begin{cor}
For any $M \in \PhiGamma_{K,A}$, there is a canonical isomorphism
\[
M^{\varphi=1} \cong \Hom_A(M^*/(\psi-1), A).
\]
In particular, the $A$-module $M^{\varphi=1}$ is finitely generated.
\end{cor}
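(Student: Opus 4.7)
The plan is to realize the claimed isomorphism as an explicit residue pairing and to reduce its verification to the $K = \QQ_p$ case treated in \cite{kpx}. First, combine the tautological evaluation $M^\dual \otimes_{\calR_{K,A}} M \to \calR_{K,A}$ with the Cartier twist to obtain an $\calR_{K,A}$-bilinear pairing $M^* \otimes_{\calR_{K,A}} M \to \calR_{K,A}(1)$. Trivialize $\calR_{K,A}(1)$ using its distinguished generator $\varepsilon$, then compose with a residue functional $\Res \colon \calR_{K,A} \to A$ (the coefficient of $\pi^{-1}$ when $K = F$, extended to general $K$ via trace down to $\calR_{F,A}$). This produces an $A$-bilinear pairing $\langle \cdot, \cdot \rangle \colon M^* \times M \to A$.

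The main algebraic input is the adjointness identity $\langle \psi \mathbf{w}, \bv \rangle = \langle \mathbf{w}, \varphi \bv \rangle$ for all $\bv \in M$ and $\mathbf{w} \in M^*$. By the defining formula $\psi(r \varphi(\bv)) = \psi(r) \bv$ and $\calR_{K,A}$-bilinearity of the evaluation, this reduces to the analogous identity on the ring pair $(\calR_{K,A}, \calR_{K,A}(1))$, which is a direct computation expressing the $\psi$-equivariance of the logarithmic differential $d\log(1+\pi)$ underlying the residue. Granting adjointness, if $\bv \in M^{\varphi=1}$ then $\langle \psi \mathbf{w} - \mathbf{w}, \bv \rangle = \langle \mathbf{w}, (\varphi - 1)\bv \rangle = 0$, so the pairing descends to a canonical $A$-linear map
\[
\alpha \colon M^{\varphi=1} \to \Hom_A(M^*/(\psi-1), A).
\]

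To show $\alpha$ is an isomorphism, I would reduce to $K = \QQ_p$ via the same inductive device as in Proposition~\ref{P:psi finite} (Shapiro's lemma combined with the $\Ind$-definition of $\calR_{K,A}$ from $\calR_{F,A}$). Over $\QQ_p$, $\alpha$ agrees with the classical Iwasawa-theoretic duality established in \cite[\S 3.3]{kpx} using a residue pairing of precisely this form. The ``in particular'' assertion then follows formally: Proposition~\ref{P:psi finite} applied to $M^*$ gives that $M^*/(\psi-1)$ is finitely generated over the noetherian affinoid algebra $A$, so its $A$-dual embeds in a finite free $A$-module via any finite presentation and is therefore itself finitely generated. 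The main technical obstacle is the adjointness identity, which requires precise normalization conventions for the residue (and, for $K \neq F$, a careful interaction with the trace); once that is in place, everything else is either a formal reduction or a citation.
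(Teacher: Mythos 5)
Your construction matches the paper's own proof step for step: the same residue pairing (evaluation composed with the trivialization of $\calR_{K,A}(1)$ and a residue functional, as in \cite[Notation~2.3.13]{kpx}), the same adjunction $\langle \psi\mathbf{w},\bv\rangle = \langle \mathbf{w},\varphi\bv\rangle$ deduced from $\varphi$-invariance of the pairing, the same induced map $\alpha$, the same reduction to $K=\QQ_p$, and the same deduction of finite generation from Proposition~\ref{P:psi finite} applied to $M^*$.

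The gap is at the one step that carries the real content: why $\alpha$ is bijective. You dispose of it by asserting that over $\QQ_p$ the map ``agrees with the classical Iwasawa-theoretic duality established in \cite[\S 3.3]{kpx}''; but that section supplies the finiteness of $M/(\psi-1)$ (i.e., Proposition~\ref{P:psi finite} here), not the duality itself --- if the duality were already there, the paper would simply cite it rather than prove it. The argument you are missing is this. Injectivity of $\alpha$ follows from nondegeneracy of the pairing. For surjectivity the point is topological: by Proposition~\ref{P:psi finite} the quotient $M^*/(\psi-1)$ is a finitely generated $A$-module, so the open mapping theorem forces $(\psi-1)M^*$ to be a \emph{closed} subspace of the Fr\'echet space $M^*$; hence every $A$-linear functional on $M^*/(\psi-1)$ pulls back to a \emph{continuous} functional on $M^*$, and by the perfectness of the residue pairing (a topological statement identifying the continuous $A$-dual of $M^*$ with $M$) any such functional is represented by an element of $M$, which the adjunction then forces to be $\varphi$-invariant. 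Without the closedness/open-mapping step there is no reason an abstract functional on $M^*/(\psi-1)$ should be continuous on $M^*$, and the representability by an element of $M$ breaks down. You should either supply this argument or replace your citation with one that actually contains the duality statement.
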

\begin{proof}
It suffices to treat the case $K = \QQ_p$.
In this case, using residues of power series, one constructs as in \cite[Notation~2.3.13]{kpx} a nondegenerate pairing $M \times M^* \to \calR_{K,A}(1)$ satisfying
\[
\{\varphi(x), \varphi(y)\} = \{x, y\} \qquad (x \in M, y \in M^*)
\]
and hence
\begin{equation}
 \label{eq:pairing phi psi}
\{\varphi(x), y\}= \{x, \psi(y)\} \qquad (x \in M, y \in M^*).
\end{equation}
We will show that the map $M \to \Hom_A(M^*, A)$ arising from the pairing $\{-,-\}$ induces the desired isomorphism; this will then imply the finite generation of $M^{\varphi=1}$
using Proposition~\ref{P:psi finite}.

To begin with, the nondegeneracy of the pairing $\{-, -\}$ implies the injectivity of $M \to \Hom_A(M^*, A)$, and
the identity \eqref{eq:pairing phi psi} shows that the image of this map is contained in
$\Hom_A(M^*/(\psi-1), A)$. In the other direction, note that Proposition~\ref{P:psi finite} and the open mapping theorem imply that $(\psi-1)M^*$ is a closed subspace of $M^*$
for the Fr\'echet topology, so every element of $\Hom_A(M^*/(\psi-1), A)$ defines a continuous $A$-linear map $M^* \to A$. By the perfectness of the pairing,
any such map corresponds to an element of $M$.
\end{proof}

\begin{remark} \label{R:berger explicit}
One of the key constructions in $p$-adic Hodge theory is Fontaine's \emph{crystalline period functor}, which takes a representation $V \in \Rep_A(G_K)$ to the finite $A$-module
\[
D_{\mathrm{crys}}(V) = (V \otimes_{\QQ_p} \bB_{\mathrm{crys}})^{G_K}
\]
where $\bB_{\mathrm{crys}}$ is a certain topological $\QQ_p$-algebra (the \emph{ring of crystalline periods}). For example, when $A = \QQ_p$ and $V$ is the $p$-adic \'etale cohomology of a smooth proper scheme over $\frako_K$, the \emph{crystalline comparison theorem} defines a functorial isomorphism of $D_{\mathrm{crys}}(V)$ with the rational crystalline cohomology of the same scheme; see \cite{bhatt-morrow-scholze} for a thoroughly modern take on the construction.

The functor $D_{\mathrm{crys}}$ factors naturally through $\PhiGamma_{K,A}$
and $\PhiGammatilde_{K,A}$: for example, for $M = \PhiGamma_{K,A}$,
$\tilde{M} \in \PhiGammatilde_{K,A}$
corresponding to $V \in \Rep_A(G_K)$, we have a canonical isomorphism
\[
D_{\mathrm{crys}}(V) \cong M[t^{-1}]^\Gamma \cong \tilde{M}[t^{-1}]^{\Gamma}, \qquad
t = \log(1+\pi).
\]
In the case of $\tilde{M}$, all we are using about $t$ is that it belongs to $P_{L_K, A, n}$
for some $n>0$ and that its zero locus on $X_{L,A}$ is precisely $Z_{L,A}$; this interpretation can be used to avoid specific references to the cyclotomic tower.

When $K/\QQ_p$ is unramified, $A$ is finite over $\QQ_p$, and $M \in \PhiGamma_{K,A}$ is crystalline (i.e., its $D_{\mathrm{crys}}$ is ``as large as possible'', as if $M$ arose from the comparison isomorphism), the object $M^{\psi=1}$ is related to the Galois
cohomology $H^1$ of each of the twists of $M$ in its cyclotomic
deformation (see Corollary~\ref{C:Iwasawa} below), and $M^{\psi=0}$ is related to the $D_{\mathrm{crys}}$ of the same twists
of $M$.  As shown by Berger \cite{berger-explicit} (and generalized by Nakamura \cite{nakamura}),
explicit formulas for Bloch--Kato's and Perrin-Riou's exponential maps,
and the ``$\delta(M)$'' formula for the determinant of the latter,
follow from a study of the relationship between these two objects.  We
will therefore focus on describing corresponding
objects made from $\tilde{M}$ in $\PhiGammatilde_{K,A}$ for general $K,A,M$.
\end{remark}

\begin{defn} \label{D:psi cohomology}
For $M \in \PhiGamma_{K,A}$, 
let $C_\psi(M)$ denote the complex
\[
0 \to M \stackrel{\psi-1}{\to} M \to 0
\]
with the nonzero terms placed in degrees $1$ and $2$.
Denote by $H^i_{\psi}(M)$ the cohomology groups of this complex.

Let $C_{\psi, \Gamma}(M)$ denote the total complex associated to the double complex
\[
0 \to C(\Gamma, M) \stackrel{\psi-1}{\to} C(\Gamma,M) \to 0.
\]
Denote by $H^i_{\psi, \Gamma}(M)$ the cohomology groups of this complex. The diagram
\[
\xymatrix{
0 \ar[r] & C(\Gamma, M) \ar^{\varphi-1}[r] \ar^{\id}[d] & C(\Gamma,M) \ar^{-\psi}[d]\ar[r] & 0 \\
0 \ar[r] & C(\Gamma, M) \ar^{\psi-1}[r] & C(\Gamma,M) \ar[r] & 0
}
\]
induces a morphism $C_{\varphi, \Gamma}(M) \to C_{\psi, \Gamma}(M)$
which is a quasi-isomorphism \cite[Proposition~2.3.6]{kpx}.
\end{defn}

\begin{defn}
For $M \in \PhiGamma_{K,X}, \tilde{M} \in \PhiGammatilde_{K,X}$ corresponding via
Theorem~\ref{T:tilde no tilde},
define the following sheaves on $X$:
\begin{align*}
\calH^i_{\psi, \Gamma}(M): \Maxspec(B) &\mapsto H^i_{\psi, \Gamma}(M \otimes \calR_{K,B}) \\
\calH^i_{\varphi, \Gamma}(M): \Maxspec(B) &\mapsto H^i_{\varphi, \Gamma}(M \otimes \calR_{K,B}) \\
\calH^i_{\varphi, \Gamma}(\tilde{M}): \Maxspec(B) &\mapsto H^i_{\varphi, \Gamma}(\tilde{M} \otimes \tilde{\calR}_{K,B}).
\end{align*}
By Theorem~\ref{T:compare phi Gamma cohomology} and
Definition~\ref{D:psi cohomology}, the sheaves $\calH^i_{\psi, \Gamma}(M),
\calH^i_{\varphi,\Gamma}(M), \calH^i_{\varphi,\Gamma}(\tilde{M})$
are canonically isomorphic; by 
\cite[Theorem~4.4.3, Remark~4.4.4]{kpx}, they are coherent.
\end{defn}

\begin{theorem} \label{T:psi to deformation}
For $M \in \Phi\Gamma_{K,A}$, with notation as in Remark~\ref{R:Gamma acyclic},
there is a canonical morphism of complexes
\[
C_{\psi, \Gamma}(M') \to C_{\psi}(M)
\]
which is a quasi-isomorphism.
\end{theorem}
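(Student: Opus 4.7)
The plan is to use Remark~\ref{R:Gamma acyclic} to show that the continuous cochain complex $C(\Gamma_K, M')$ is quasi-isomorphic to $M$ placed in cohomological degree 1, and then to use the fact that $C_{\psi, \Gamma}(M')$ is the mapping cone of $\psi - 1$ acting on $C(\Gamma_K, M')$ to deduce the desired quasi-isomorphism $C_{\psi, \Gamma}(M') \xrightarrow{\sim} C_\psi(M)$.

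First I would verify that $H^i(\Gamma_K, M')$ vanishes except in degree 1, where it equals $M$. The identifications $H^0(\Gamma_K, M') = (M')^{\Gamma_K} = 0$ and $H^1(\Gamma_K, M') \cong (M')_{\Gamma_K} \cong M$ follow directly from Remark~\ref{R:Gamma acyclic}. For the vanishing in degrees $\geq 2$, I would fix an open subgroup $\Gamma_K^\circ \subset \Gamma_K$ isomorphic to $\ZZ_p$ with topological generator $\gamma$, note that its continuous cohomology on $M'$ is computed by the two-term Koszul complex $[M' \xrightarrow{\gamma-1} M']$, and pass from $\Gamma_K^\circ$ back to $\Gamma_K$ via inflation-restriction (harmless since $\Gamma_K/\Gamma_K^\circ$ is finite of order invertible on $\QQ_p$-vector spaces). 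This yields a canonical quasi-isomorphism from $C(\Gamma_K, M')$ to the complex with $M$ in cohomological degree 1, realized by truncation followed by projection onto coinvariants.

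Next, I would apply this quasi-isomorphism to each column of the double complex $[C(\Gamma_K, M') \xrightarrow{\psi-1} C(\Gamma_K, M')]$ defining $C_{\psi, \Gamma}(M')$; since $\psi$ is $\Gamma_K$-equivariant, the construction gives a well-defined morphism of double complexes, and the target has total complex canonically isomorphic to $C_\psi(M)$ (in degrees $1$ and $2$). The induced morphism of total complexes is the canonical map of the theorem. That it is a quasi-isomorphism is then immediate from the first spectral sequence of the double complex: the $E_1$ page has $E_1^{p,1} = M$ for $p \in \{0,1\}$ with horizontal differential $\psi - 1$ and vanishes elsewhere, so $E_\infty = E_2$ equals $M^{\psi=1}$ in total degree 1 and $M/(\psi-1)M$ in total degree 2, matching $H^*(C_\psi(M))$.

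The main obstacle is controlling $H^i(\Gamma_K, M')$ for $i \geq 2$ in the correct topological setting: since $M' = M \widehat{\otimes}_{\QQ_p} \Lambda_K$ is a Fr\'echet $A$-module rather than a finite-dimensional $\QQ_p$-vector space, one must ensure the Koszul-type computation respects the topology (i.e., that continuous coinvariants compute derived coinvariants in the appropriate sense). This can be handled by direct inspection of $\Lambda_K$ as the algebra of locally analytic distributions on $\Gamma_K$, or by invoking the locally analytic cohomology framework from \cite{kpx}; morally it amounts to the continuous analogue of $\ZZ_p$ having cohomological dimension 1.
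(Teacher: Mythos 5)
Your proof is correct and is essentially the argument behind the paper's one-line proof, which simply invokes \cite[Theorem~4.4.8]{kpx}: the key inputs --- the projection $M' \to M$ sectioning the inclusion (Remark~\ref{R:Gamma acyclic}), whence $(M')^{\Gamma_K}=0$ and $(M')_{\Gamma_K}\cong M$, the Koszul-complex computation of $\Gamma_K$-cohomology, and the mapping-cone comparison for $\psi-1$ --- are exactly those used there. The topological point you flag at the end (that continuous cochains of $\Gamma_K$ on the large module $M'$ are computed by the two-term Koszul complex, so that the higher cohomology vanishes and the map can be realized at the chain level) is indeed the only substantive step left to fill in, and it is supplied by the Koszul/locally-analytic comparison in \cite[\S 2.2--2.3]{kpx}.
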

\begin{proof}
Apply \cite[Theorem~4.4.8]{kpx}.
\end{proof}
\begin{cor} \label{C:psi to deformation}
Suppose that $M \in \PhiGamma_{K,A}$, $\tilde{M} \in \PhiGammatilde_{K,A}$ correspond as in 
Theorem~\ref{T:tilde no tilde}. Then for $X = \Maxspec(A)$,
we have canonical isomorphisms
\[
\Gamma(X \times_K W_K, \calH^i_{\psi, \Gamma}(\tilde{M} \boxtimes \Dfm_K)) \cong H^i_\psi(M).
\]
\end{cor}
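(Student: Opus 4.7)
The plan is to reduce the statement to Theorem~\ref{T:psi to deformation}, with the intermediate work amounting to a quasi-Stein theorem-B argument in the $W_K$ direction. First I would use the equivalence of stacks $\PhiGamma_{K,X} \cong \PhiGammatilde_{K,X}$ from Theorem~\ref{T:tilde no tilde} and the fact (recorded just before the corollary) that the sheaves $\calH^i_{\psi,\Gamma}(\tilde{M} \boxtimes \Dfm_K)$ and $\calH^i_{\varphi,\Gamma}(M\boxtimes\Dfm_K)$, etc., are all canonically isomorphic, to replace $\tilde{M}\boxtimes\Dfm_K$ by $M\boxtimes\Dfm_K \in \PhiGamma_{K,X\times_{\QQ_p}W_K}$ throughout. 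This lets us work on the classical side, where the complex $C_{\psi,\Gamma}$ and the module $M' = M \widehat{\otimes}_{\QQ_p} \Lambda_K$ of Remark~\ref{R:Gamma acyclic} are available.

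Next I would exhaust $W_K$ by an increasing admissible affinoid cover $W_K^{(n)} = \Maxspec(\Lambda_K^{(n)})$ (each a finite disjoint union of closed polydiscs of radius tending to $1$), so that $X \times W_K$ is quasi-Stein over $X$. Under the equivalence of categories, the restriction of $M\boxtimes\Dfm_K$ to $X\times W_K^{(n)} = \Maxspec(B_n)$ with $B_n = A\widehat{\otimes}_{\QQ_p}\Lambda_K^{(n)}$ is identified with $M^{(n)} := M\widehat{\otimes}_{\QQ_p}\Lambda_K^{(n)}$, equipped with the $\varphi$-action from $M$ and the diagonal $\Gamma$-action. Thus by definition
\[
\calH^i_{\psi,\Gamma}(M\boxtimes\Dfm_K)(X\times W_K^{(n)}) = H^i_{\psi,\Gamma}(M^{(n)}),
\]
which by Theorem~\ref{T:coherence cohomology}(a) is finite over $B_n$, and by Theorem~\ref{T:coherence cohomology}(b) compatible with base change along $B_{n+1}\to B_n$. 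Hence $\calH^i_{\psi,\Gamma}(M\boxtimes\Dfm_K)$ is a coherent sheaf on the quasi-Stein space $X\times W_K$, and Theorem~B for quasi-Stein spaces gives
\[
\Gamma(X\times W_K,\calH^i_{\psi,\Gamma}(M\boxtimes\Dfm_K)) \;=\; \varprojlim_n H^i_{\psi,\Gamma}(M^{(n)}),
\]
with vanishing $R^1\varprojlim$.

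The final step is to match this inverse limit with $H^i_{\psi,\Gamma}(M')$ and then invoke Theorem~\ref{T:psi to deformation}. The complex $C_{\psi,\Gamma}(M')$ is the termwise inverse limit of the complexes $C_{\psi,\Gamma}(M^{(n)})$, since $\Lambda_K = \varprojlim_n \Lambda_K^{(n)}$ as Fr\'echet algebras and $C(\Gamma,-)$ commutes with such limits of topological modules. Given the finiteness and base-change compatibility of Theorem~\ref{T:coherence cohomology}, the system $\{H^i_{\psi,\Gamma}(M^{(n)})\}$ is Mittag--Leffler (the transition maps are surjective modulo finitely generated torsion, and in any case the $R^1\varprojlim$ of coherent systems on a quasi-Stein space vanishes), so $H^i_{\psi,\Gamma}(M') \cong \varprojlim_n H^i_{\psi,\Gamma}(M^{(n)})$. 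Combining with Theorem~\ref{T:psi to deformation}, which gives $H^i_{\psi,\Gamma}(M')\cong H^i_\psi(M)$, yields the desired identification.

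The main obstacle is the last interchange of cohomology with inverse limits: one must verify that the complexes $C_{\psi,\Gamma}(M^{(n)})$ form a system well-behaved enough to commute $H^i$ with $\varprojlim$. I expect this to follow formally from the coherence statements of Theorem~\ref{T:coherence cohomology} together with the general vanishing of $R^1\varprojlim$ for coherent sheaves on quasi-Stein spaces (already invoked above to compute $\Gamma(X\times W_K,-)$); everything else is definition chasing once one has set up the admissible affinoid cover of $W_K$ and identified the restrictions of $M\boxtimes\Dfm_K$ with the $M^{(n)}$.
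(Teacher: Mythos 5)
Your argument is correct and is exactly the intended derivation: the paper states this corollary without any explicit proof, treating it as immediate from Theorem~\ref{T:psi to deformation} once one knows that $\calH^i_{\psi,\Gamma}(M\boxtimes\Dfm_K)$ is a coherent sheaf on the quasi-Stein space $X\times_K W_K$ (quoted from \cite[Theorem~4.4.3, Remark~4.4.4]{kpx}), so that its global sections are computed as $\varprojlim_n H^i_{\psi,\Gamma}(M\widehat{\otimes}_{\QQ_p}\Lambda_K^{(n)}) \cong H^i_{\psi,\Gamma}(M')$. The only step with real content is the interchange of $H^i$ with $\varprojlim_n$, and you correctly identify that it follows from the finiteness and base-change statements of Theorem~\ref{T:coherence cohomology} together with the vanishing of $R^1\varprojlim$ for the resulting coherent system.
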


This statement applies to Iwasawa cohomology as follows.
\begin{cor} \label{C:Iwasawa}
For $V \in \Rep_A(G_K)$ corresponding to $M \in \PhiGamma_{K,A}$,
$\tilde{M} \in \PhiGammatilde_{K,A}$ via 
Theorem~\ref{T:phi gamma embedding}
and Theorem~\ref{T:tilde no tilde},
write
\[
H^i_{\Iw}(G_K, V) = \left( \lim_{n \to \infty} H^i(G_{K(\mu_{p^n})}, T) \right) \otimes_{\ZZ} \QQ
\]
for $T \subseteq V$ the unit ball for some Galois-invariant Banach module norm on $V$
(the construction does not depend on the choice), with the transition maps being the corestriction maps. Then for each $i$, we have functorial isomorphisms
\[
H^i_{\Iw}(G_K, V) \widehat{\otimes}_{\ZZ_p\llbracket \Gamma_K \rrbracket} \Lambda_K \cong H^i_\psi(M) \cong H^i_{\varphi, \Gamma}(\tilde{M} \boxtimes \Dfm_K)
\]
of $\Lambda_K$-modules compatible with base change.
\end{cor}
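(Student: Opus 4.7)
The statement packages two claimed isomorphisms, which I would address separately and then combine; together with the functoriality of each construction involved, they give the base change compatibility at the end.

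For the second isomorphism, $H^i_\psi(M) \cong H^i_{\varphi,\Gamma}(\tilde{M} \boxtimes \Dfm_K)$, my plan is to assemble three prior results. First, Corollary~\ref{C:psi to deformation} gives $H^i_\psi(M) \cong \Gamma(X \times_{\QQ_p} W_K, \calH^i_{\psi,\Gamma}(\tilde{M} \boxtimes \Dfm_K))$. Second, the quasi-isomorphism $C_{\varphi,\Gamma}(-) \to C_{\psi,\Gamma}(-)$ of Definition~\ref{D:psi cohomology} identifies the cohomology sheaves $\calH^i_{\psi,\Gamma}$ and $\calH^i_{\varphi,\Gamma}$ on $X \times W_K$. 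Third, since $X \times W_K$ is quasi-Stein and these cohomology sheaves are coherent (as noted just after their definition), global sections compute $H^i_{\varphi,\Gamma}(\tilde{M} \boxtimes \Dfm_K)$.

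For the first isomorphism, $H^i_{\Iw}(G_K, V) \widehat{\otimes}_{\ZZ_p\llbracket\Gamma_K\rrbracket} \Lambda_K \cong H^i_\psi(M)$, I would first reinterpret the left-hand side via Shapiro's lemma: in its limit form, this identifies $\varprojlim_n H^i(G_{K(\mu_{p^n})}, T)$ with $H^i(G_K, T \widehat{\otimes}_{\ZZ_p} \ZZ_p\llbracket\Gamma_K\rrbracket^\iota)$, where $\iota$ is the standard inversion involution accounting for corestriction versus restriction. After inverting $p$ and extending scalars along $\ZZ_p\llbracket\Gamma_K\rrbracket[1/p] \hookrightarrow \Lambda_K$, the left-hand side becomes $H^i(G_K, V \widehat{\otimes}_{\QQ_p} \Lambda_K)$, i.e., the Galois cohomology of the cyclotomic deformation of $V$ as an $A \widehat{\otimes}_{\QQ_p} \Lambda_K$-linear representation. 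By the construction of $\Dfm_K$ via the left regular action of $\ZZ_p\llbracket\Gamma_K\rrbracket$ on $\Lambda_K$, this representation corresponds under the functor of Theorem~\ref{T:phi gamma embedding} to $M \boxtimes \Dfm_K$. Applying the sheafy version of Theorem~\ref{T:coherence cohomology}(c) on $X \times W_K$ identifies this Galois cohomology with $H^i_{\varphi,\Gamma}(M \boxtimes \Dfm_K)$, and then Theorem~\ref{T:compare phi Gamma cohomology} matches it with $H^i_{\varphi,\Gamma}(\tilde{M} \boxtimes \Dfm_K)$. Composing with the second isomorphism proven above yields the claim.

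The main obstacle is the passage from the inverse limit appearing in the Iwasawa-theoretic definition to the Galois cohomology of the completed tensor product: this needs a Mittag--Leffler-type argument resting on finite generation of Galois cohomology for $[K:\QQ_p] < \infty$ (Theorem~\ref{T:coherence cohomology}(a)), together with careful tracking of the involution $\iota$ so that the $\ZZ_p\llbracket\Gamma_K\rrbracket$-action on Iwasawa cohomology via corestriction aligns with the left $\Lambda_K$-action on $\Dfm_K$. Once these compatibilities are in place, compatibility with base change along morphisms of $\Lambda_K$-algebras is automatic from Theorem~\ref{T:coherence cohomology}(b) applied in the deformed setting, and functoriality in $V$ is immediate from the functoriality of each constituent isomorphism.
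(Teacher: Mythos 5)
Your argument is correct and is essentially the paper's own proof, which consists of the single line ``combine Theorem~\ref{T:psi to deformation} with \cite[Corollary~4.4.11]{kpx}'': your first paragraph reproduces the chain Corollary~\ref{C:psi to deformation}, then the quasi-isomorphism $C_{\varphi,\Gamma}\simeq C_{\psi,\Gamma}$ of Definition~\ref{D:psi cohomology}, then quasi-Stein acyclicity of the coherent sheaves $\calH^i$, while your second paragraph is in effect an unwinding of the cited \cite[Corollary~4.4.11]{kpx} (limit Shapiro's lemma with the involution $\iota$, Mittag--Leffler via finiteness of the $H^i(G_{K(\mu_{p^n})},T)$, and the comparison of Galois cohomology with $(\varphi,\Gamma)$-cohomology of the deformation). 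The one point to handle with care is that Theorem~\ref{T:coherence cohomology} is stated for affinoid coefficients, so the ``sheafy version'' you invoke over the quasi-Stein space $W_K$ must be run over the affinoid exhaustion $\{W_{K,n}\}$ and assembled using the coherence of $\calH^i_{\varphi,\Gamma}(\tilde{M}\boxtimes\Dfm_K)$.
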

\begin{proof}
Combine Theorem~\ref{T:psi to deformation} with \cite[Corollary~4.4.11]{kpx}.
\end{proof}

\begin{remark}
Corollary~\ref{C:Iwasawa} is a variant of a statement made by Fontaine in his original language of $(\varphi, \Gamma)$-modules; see \cite[\S II.1]{cherbonnier-colmez}
or \cite[Theorem~II.8]{berger-explicit}.
\end{remark}

We now treat the kernel of $\psi$. Although Theorem~\ref{T:psi kernel} is ultimately an easy consequence of previous results, its statement is in fact new.
\begin{defn}
Since $W_K$ is a quasi-Stein space, we may write it as the union of an ascending sequence $\{W_{K,n}\}$ of admissible affinoid subspaces. Given an affinoid space $X$ and a coherent sheaf $\calF$ on $X \times_K W_K$, define the \emph{module of boundary sections} of $\calF$ 
as 
\[
\Gamma^{\bd}(\calF) = 
\varinjlim_{n \to \infty} \Gamma(X \times_K (W_K \setminus W_{K,n}), \calF).
\]
\end{defn}

\begin{theorem} \label{T:psi kernel}
Suppose that $M \in \PhiGamma_{K,A}$,
$\tilde{M} \in \PhiGammatilde_{K,A}$ correspond as in
Theorem~\ref{T:tilde no tilde}.
Then for $X = \Maxspec(A)$,
we have canonical isomorphisms
\[
\Gamma^{\bd}(\calH^i_{\varphi,\Gamma}(\tilde{M} \boxtimes \Dfm_K)) \cong \begin{cases} M^{\psi=0} & i=1 \\ 0 & i \neq 1.
\end{cases}
\]
\end{theorem}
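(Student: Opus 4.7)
The strategy is to sheafify Theorem~\ref{T:psi to deformation} over $W_K$, pass to the boundary via a direct limit, and finally identify the resulting complex using a Mellin-style transform. First, I would refine Theorem~\ref{T:psi to deformation} to a local statement: for each admissible affinoid open $U \subseteq W_K$ with coordinate ring $\Lambda_U = \calO(U)$, there should be a quasi-isomorphism
\[
C_{\psi,\Gamma}\bigl(M \widehat{\otimes}_{\QQ_p} \Lambda_U\bigr) \simeq C_\psi(M) \otimes_{\Lambda_K}^{\mathbb{L}} \Lambda_U
\]
of complexes of $\Lambda_U$-modules. The proof of \cite[Theorem~4.4.8]{kpx} adapts verbatim: the construction of the splitting in Remark~\ref{R:Gamma acyclic} (which uses only that the $\Gamma_K$-action on $M$ is locally analytic) localizes over $W_K$. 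Combined with Theorem~\ref{T:compare phi Gamma cohomology} and Definition~\ref{D:psi cohomology}, this identifies the sections of $\calH^i_{\varphi,\Gamma}(\tilde{M} \boxtimes \Dfm_K)$ over $X \times U$ with the $i$-th cohomology of the above complex.

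Second, by exactness of filtered colimits together with the flatness of each open immersion $\Lambda_K \to \calO(W_K \setminus W_{K,n})$, one obtains
\[
\Gamma^{\bd}\bigl(\calH^i_{\varphi,\Gamma}(\tilde{M} \boxtimes \Dfm_K)\bigr) \cong H^i\bigl(C_\psi(M) \otimes_{\Lambda_K} \Lambda_K^{\bd}\bigr),
\]
where $\Lambda_K^{\bd} := \varinjlim_n \calO(W_K \setminus W_{K,n})$. Coherence of $\calH^i_{\varphi,\Gamma}$ via Theorem~\ref{T:coherence cohomology}(a) (applied after restricting to each affinoid subspace) controls any potential higher Tor contributions.

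Third, the key step is to identify this localized complex with $[M \xrightarrow{\psi} M]$ placed in degrees $1$ and $2$. For $K = \QQ_p$, this is an incarnation of the classical Mellin/Amice transform: the identification $\gamma \mapsto (1+\pi)^\gamma$ relates $\Lambda_{\QQ_p}$ to the $\psi=0$ part of $\calR_{\QQ_p,A}$ and conjugates the action of $\psi-1$ on $M \widehat{\otimes} \Lambda_K$ into $\psi$ on $M$ up to a contractible piece supported at the trivial character $\gamma = 1$, which lies at the center of $W_K$ and hence disappears upon passing to the boundary. Once this identification is in place, the cohomology is $\ker\psi = M^{\psi=0}$ in degree $1$ and $\coker\psi = 0$ in degree $2$ (using that $\psi$ is surjective because $\psi \circ \varphi = \id$), with vanishing in all other degrees. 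The general case $[K:\QQ_p]<\infty$ reduces to $K = \QQ_p$ via the Shapiro-type argument of Remark~\ref{R:Cartan-Serre}, since both the cyclotomic deformation and the boundary-section functor commute with induction.

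The main obstacle is the Mellin-style identification in the third step. Making it precise requires carefully tracking the $\Lambda_K$-module structure on $C_\psi(M)$ induced by the $\Gamma_K$-action on $M$, and verifying that inverting the ideal cutting out the trivial character turns $\psi - 1$ into $\psi$ at the level of the complex. A conceptual reformulation in terms of the geometry of $W_K$ (removing a single divisor at the center) may clarify matters, but the verification in the non-trivial rank case will likely require a computation mirroring the pairing used in the proof of the corollary to Proposition~\ref{P:psi finite}, combined with the flatness properties of $\Lambda_K^{\bd}$ over $\Lambda_K$.
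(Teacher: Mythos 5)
Your steps 1--2 (sheafifying Theorem~\ref{T:psi to deformation} over affinoid subdomains of $W_K$ and passing to the colimit defining $\Gamma^{\bd}$) are sound in outline and reduce the theorem, as you say, to identifying $H^i\bigl(C_\psi(M)\otimes_{\Lambda_K}\Lambda_K^{\bd}\bigr)$; since $\Lambda_K^{\bd}$ is flat over $\Lambda_K$ this amounts to computing $M^{\psi=1}\otimes_{\Lambda_K}\Lambda_K^{\bd}$ and $(M/(\psi-1))\otimes_{\Lambda_K}\Lambda_K^{\bd}$. The second of these vanishes by Proposition~\ref{P:psi finite} (a finite $A$-module is supported on a quasicompact subset of $X\times W_K$, hence dies at the boundary), which gives the case $i=2$; the case $i=0$ is immediate since $H^0_\psi(M)=0$. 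So far this matches the structure of the argument in the paper.

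The gap is in your third step. There is no ``Mellin conjugation'' turning $\psi-1$ into $\psi$: the $\Lambda_K$-module structure on $M$ comes from the (locally analytic) $\Gamma_K$-action, which commutes with both $\varphi$ and $\psi$, so $\psi-1$ and $\psi$ are two different $\Lambda_K$-linear endomorphisms of $M$ and no base change along $\Lambda_K\to\Lambda_K^{\bd}$ can interchange them; moreover $M\otimes_{\Lambda_K}\Lambda_K^{\bd}$ is not $M$, so the localized complex is not $[M\xrightarrow{\;\psi\;}M]$. (Also, the defect between $\psi=1$ and $\psi=0$ is not supported only at the trivial character; it is merely supported on a quasicompact subset.) The correct bridge is the map $\varphi-1:M^{\psi=1}\to M^{\psi=0}$ from \eqref{eq:psi sequence}, whose kernel $M^{\varphi=1}$ (and whose cokernel, a subquotient of $H^1_\varphi(M)$-type data) is a finite $A$-module and therefore vanishes after $\otimes_{\Lambda_K}\Lambda_K^{\bd}$. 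Two nontrivial inputs are then still required: that $M^{\psi=0}$ is itself a finite projective $\Lambda_K^{\bd}$-module of the expected rank, so that it is unchanged by further boundary localization (this is \cite[Proposition~4.3.8]{kpx}, which is exactly the ``Mellin transform'' fact you are reaching for, but it identifies $M^{\psi=0}$ with boundary sections rather than altering the operator), and a rank count --- via Liu's Euler characteristic formula applied pointwise on $X\times_K W_K$ --- showing that $\Gamma^{\bd}(\calH^1_{\varphi,\Gamma}(\tilde{M}\boxtimes\Dfm_K))$ is finite projective of the same rank, after reducing to $A$ reduced. Without these two inputs the central isomorphism for $i=1$ is not established.
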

\begin{proof}
The vanishing for $i=0$ is apparent from Corollary~\ref{C:psi to deformation}
(because on a quasi-Stein space, a coherent sheaf is determined by its module of global sections); the vanishing for $i=2$ follows from the same considerations plus 
Proposition~\ref{P:psi finite}. For $i=1$, the morphism from the left side to the right side is induced by the map $\varphi-1$ in \eqref{eq:psi sequence}; to check that it is an isomorphism, we may reduce to the case where $A$ is reduced. In this case, we may
use Liu's extension of Tate's Euler characteristic formula \cite{liu-herr} (see also \cite[Theorem~2.3.11]{kpx}), applied pointwise on $X \times_K W_K$,
to see that the left side is a finite projective module over $A \widehat{\otimes}_K \Lambda_K$; we may then use \cite[Proposition~4.3.8]{kpx}, applied pointwise on $X$, to see that the right side is also a finite projective module of the same rank and that the map is an isomorphism.
\end{proof}

\section{Coda: beyond the cyclotomic tower}
\label{sec:coda}

To conclude, we put the previous discussion of the cyclotomic deformation into a context which we find suggestive for future work.

\begin{remark} \label{R:general field}
Let $L$ be any perfectoid field which is the completion of a Galois algebraic extension of $K$ with Galois group $G$.
By Lemma~\ref{L:Witt module descend}, we may also characterize $\PhiGammatilde_{K,A}$ 
as the category of objects of $\PhiMod_{L,A}$ equipped with continuous semilinear $G$-actions. For example, we may take $L = \CC_K$ to be a completed algebraic closure of $K$;
in this case, using Theorem~\ref{T:perfect equivalence} we get a description of
$\PhiGamma_{K,A}$ as objects of $\BPair_{\CC_K,A}$ equipped with continuous semilinear $G_K$-actions. In the case $A = \QQ_p$, this description is due to Berger \cite{berger-b-pairs}.
\end{remark}

\begin{remark}
In the language of \cite{kedlaya-liu1}, we may view objects of $\PhiGammatilde_{K,A}$
as sheaves on the pro-\'etale site of $K$ which are locally finite free modules over the ring sheaf $\CC_X \widehat{\otimes}_{\Qp} A$, equipped with an action of $\varphi$.
\end{remark}

\begin{remark}
For $M \in \PhiGamma_{K,A}$, we have stated descriptions of the objects $M^{\psi=1}$ 
(in Corollary~\ref{C:psi to deformation})
and $M^{\psi=0}$
(in Theorem~\ref{T:psi kernel}) of cyclotomic Iwasawa theory
in terms of the corresponding $\tilde{M} \in \PhiGammatilde_{K,A}$.
In light of the previous discussion, the object $\tilde{M}$ can be constructed, and computations can be made with it, without any direct reference to the cyclotomic extension of $\QQ_p$; the only appearance of the cyclotomic extension in the formulas is via the cyclotomic deformation $\Dfm_K$ on the weight space $W_K$.

Consequently, for a general $p$-adic Lie extension $L$ of $K$ with group $\Gamma$,
one may hope to get something meaningful by forming a suitable deformation space of representations $W_K$, using the homomorphism $G_K \to \Gal(L/K) \cong \Gamma$ to define an object of $\Rep_{W_K}(G_K)$, passing to the associated object in $\PhiGammatilde_{K,W_K}$,
taking the external tensor product with $\tilde{M}$,
and considering the cohomology of the result. For starters, in the case where $\Gamma$ is again a one-dimensional $p$-adic Lie group, it would be worth comparing this process to other constructions proposed as analogues of \cite{berger-explicit}, e.g.,
those of Berger--Fourquaux \cite{berger-fourquaux} and Schneider--Venjakob
\cite{schneider-venjakob}.
\end{remark}


\begin{thebibliography}{99}

\bibitem{beauville-laszlo}
A. Beauville and Y. Laszlo, Un lemme de descente, \textit{C. R. Acad. Sci. Paris} \textbf{320} (1995), 335--340.

\bibitem{bergdall}
J. Bergdall, Paraboline variation of $p$-adic families of $(\varphi,\Gamma)$-modules,
arXiv:1410.3412v2 (2016).

\bibitem{berger}
L. Berger, Repr\'esentations $p$-adiques et \'equations diff\'erentielles,
\textit{Invent. Math.} \textbf{148} (2002), 219--284.

\bibitem{berger-explicit}
L. Berger, Bloch and Kato's exponential map: three explicit formulas,
\textit{Doc. Math.} Extra Volume Kato (2003), 99--129.

\bibitem{berger-b-pairs}
L. Berger, Construction de $(\phi, \Gamma)$-modules: repr\'esentations $p$-adiques et $B$-paires, \textit{Algebra Number Theory} \textbf{2} (2008), 91--120.

\bibitem{berger-colmez}
L. Berger and P. Colmez, Familles de repr\'esentations de de Rham et monodromie $p$-adique, \textit{Ast\'erisque} \textbf{319} (2008), 303--337.

\bibitem{berger-fourquaux}
L. Berger and L. Fourquaux, Iwasawa theory and $F$-analytic Lubin-Tate $(\varphi, \Gamma)$-modules, arXiv:1512.03383v1 (2015).

\bibitem{berger-schneider-xie}
L. Berger, P. Schneider, and B. Xie, Rigid character varieties, Lubin-Tate theory,
and $(\varphi, \Gamma)$-modules,  arXiv:1511.01819v1 (2015).

\bibitem{bhatt-morrow-scholze}
B. Bhatt, M. Morrow, and P. Scholze,
Integral $p$-adic Hodge theory, arXiv:1602.03148v1 (2016).

\bibitem{bgr}
S. Bosch, U. G\"untzer, and R. Remmert,
\textit{Non-Archimedean Analysis},
Grundlehren der Math. Wiss. 261, Springer-Verlag, Berlin, 1984.

\bibitem{breuil-hellmann-schraen}
C. Breuil, E. Hellmann, and B. Schraen,
Une interpr\'etation modulaire de la vari\'et\'e trianguline
arXiv:1411.7260v2 (2015).

\bibitem{breuil-hellmann-schraen2}
C. Breuil, E. Hellmann, and B. Schraen,
Smoothness and classicality of eigenvarieties,
arXiv:1510.01222v1 (2015).

\bibitem{chenevier}
G. Chenevier, Sur la densit\'e des repr\'esentations cristallines de
$\Gal(\overline{\QQ}_p/\QQ_p)$, \textit{Math. Ann.} \textbf{355} (2013), 1469--1525.

\bibitem{cherbonnier-colmez}
F. Cherbonnier and P. Colmez, Repr\'esentations $p$-adiques surconvergentes,
\textit{Invent. Math.} \textbf{133} (1998), 581--611.

\bibitem{coates-greenberg}
J. Coates and R. Greenberg, Kummer theory for abelian varieties over local fields,
\textit{Invent. Math.} \textbf{124} (1996), 129--174.

\bibitem{colmez-langlands}
P. Colmez,
 Repr\'esentations de $\mathrm{GL}_2(\mathbf{Q}_p)$ et $(\varphi, \Gamma)$-modules,
 \textit{Ast\'erisque} \textbf{330} (2010), 281--509.
 
\bibitem{dejong-crys}
A.J. de Jong, Crystalline Dieudonn\'e theory via formal and rigid geometry,
\textit{Publ. Math. IH\'ES} \textbf{82} (1995), 5--96.

\bibitem{fargues}
L. Fargues, Quelques r\'esultats et conjectures concernant la courbe,
\textit{Ast\'erisque} \textbf{369} (2015), 325--374.

\bibitem{fargues-fontaine-durham}
L. Fargues and J.-M. Fontaine, Vector bundles on curves and $p$-adic Hodge theory,
in \textit{Automorphic Forms and Galois Representations, Volume 1},
London Math. Soc. Lect. Note Ser. 414, Cambridge Univ. Press, 2014.

\bibitem{fargues-fontaine}
L. Fargues and J.-M. Fontaine, Courbes et fibr\'es vectoriels en th\'eorie de Hodge
$p$-adique, in preparation; draft (September 2015) available at
\url{http://webusers.imj-prg.fr/~laurent.fargues/}.

\bibitem{fesenko}
I.B. Fesenko, On deeply ramified extensions,
\textit{J. London Math. Soc.} \textbf{57} 1998), 325--335.

\bibitem{fontaine-wintenberger}
J.-M. Fontaine and J.-P. Wintenberger,
Le ``corps des normes'' de certaines extensions alg\'ebriques
de corps locaux,
\textit{C.R. Acad. Sci. Paris S\'er. A-B} \textbf{288} (1979), A367--A370.

\bibitem{fontaine-phigamma}
J.-M. Fontaine, Repr\'esentations $p$-adiques des corps locaux, I,
in \textit{The Grothendieck Festschrift, Vol. II},
Progr. Math. 87, Birkh\"auser Boston, Boston, 1990, 249--309.

\bibitem{herr}
L. Herr, Sur la cohomologie galoisienne des corps $p$-adiques, \textit{Bull. Soc. Math. France} \textbf{126} (1998), 563--600.

\bibitem{herr-tate}
L. Herr, Une approche nouvelle de la dualit\'e locale de Tate,
\textit{Math. Ann.} \textbf{320} (2001), 307--337.

\bibitem{joyal-tierney}
A. Joyal and M. Tierney, An extension of the Galois theory of Grothendieck,
\textit{Mem. Amer. Math. Soc.} \textbf{309} (1984).

\bibitem{katz-modular}
N.M. Katz, $p$-adic properties of modular schemes and modular forms,
in \textit{Modular functions of one variable, III}, Lecture Notes in Math.\ 350,
Springer--Verlag, 1973, 69--190. 

\bibitem{kedlaya-annals}
K.S. Kedlaya, A $p$-adic local monodromy theorem, 
\textit{Annals of Math.} \textbf{160} (2004), 93--184.

\bibitem{kedlaya-revisited}
K.S. Kedlaya, Slope filtrations revisited,
\textit{Doc. Math.} \textbf{10} (2005), 447--525;
errata, \textit{ibid.} \textbf{12} (2007), 361--362;
additional errata, \cite[Remark~4.2.19]{kedlaya-liu1}.

\bibitem{kedlaya-bordeaux}
K.S. Kedlaya, Some new directions in $p$-adic Hodge theory, \textit{J. Th\'eorie Nombres Bordeaux} \textbf{21} (2009), 285--300.

\bibitem{kedlaya-witt}
K.S. Kedlaya, Nonarchimedean geometry of Witt vectors,
\textit{Nagoya Math. J.} \textbf{209} (2013), 111--165.

\bibitem{kedlaya-new-phigamma}
K.S. Kedlaya, New methods for $(\varphi, \Gamma)$-modules,
\textit{Res. Math. Sci.} \textbf{2:20} (2015).

\bibitem{kedlaya-noetherian}
K.S. Kedlaya, Noetherian properties of Fargues-Fontaine curves,
\textit{Int. Math. Res. Notices} (2015), article ID rnv227. 

\bibitem{kedlaya-adic}
K.S. Kedlaya, Reified valuations and adic spaces, \textit{Res. Number Theory}
\textbf{1:20} (2015).

\bibitem{kedlaya-liu-families}
K.S. Kedlaya and R. Liu, On families of $(\varphi, \Gamma)$-modules,
\textit{Algebra Number Theory} \textbf{4} (2010), 943--967.

\bibitem{kedlaya-liu1}
K.S. Kedlaya and R. Liu, Relative $p$-adic Hodge theory: Foundations,
\textit{Ast\'erisque} \textbf{371} (2015), 239 pages. 

\bibitem{kedlaya-liu2}
K.S. Kedlaya and R. Liu, Relative $p$-adic Hodge theory II: Imperfect period rings,
arXiv:1602.06899v2 (2016).

\bibitem{kedlaya-liu-finiteness}
K.S. Kedlaya and R. Liu, Finiteness of cohomology of local systems on rigid analytic spaces,
arXiv:1611.06930v1 (2016).

\bibitem{kpx}
K.S. Kedlaya, J. Pottharst, and L. Xiao, Cohomology of arithmetic families of $(\varphi, \Gamma)$-modules, \textit{J. Amer. Math. Soc.}  \textbf{27} (2014), 1043--1115. 

\bibitem{kiehl-finiteness}
R. Kiehl, Der Endlichkeitsatz f\"ur eigentliche Abbildungen
in der nichtarchimedischen Funktionentheorie,
\textit{Invent. Math.} \textbf{2} (1967), 191--214.

\bibitem{liu-herr}
R. Liu, Cohomology and duality for $(\varphi, \Gamma)$-modules over the Robba ring,
\textit{Int. Math. Res. Notices} \textbf{3} (2008), article ID rnm150.

\bibitem{liu-zhu}
R. Liu and X. Zhu, Rigidity and a Riemann-Hilbert correspondence for $p$-adic local systems,
arXiv:1602.06282v3 (2016).

\bibitem{nakamura}
K. Nakamura, Iwasawa theory of de Rham $(\varphi, \Gamma)$-modules over the Robba ring,
\textit{J. Inst. Math. Jussieu} \textbf{13} (2014), 65--118. 

\bibitem{narasimhan-seshadri}
M.S. Narasimhan and C.S. Seshadri, Stable and unitary vector bundles on a compact Riemann surface, \textit{Annals of Math.} \textbf{82} (1965), 540--567.

\bibitem{schneider-venjakob}
P. Schneider and O. Venjakob, Coates-Wiles homomorphisms and Iwasawa cohomology for Lubin-Tate extensions, arXiv:1511.04922v1 (2015).

\bibitem{scholze1}
P. Scholze, Perfectoid spaces, \textit{Publ. Math. IH\'ES} \textbf{116} (2012), 245--313.

\bibitem{scholze2}
P. Scholze, $p$-adic Hodge theory for rigid analytic varieties, \textit{Forum Math. Pi} \textbf{1} (2013).

\bibitem{scholze-icm}
P. Scholze, Perfectoid spaces and their applications, in \textit{Proceedings of the
International Congress of Mathematicians, Seoul, 2014}.

\bibitem{sen-lie}
S. Sen, Ramification in $p$-adic Lie extensions,
\textit{Inv. Math.} \textbf{17} (1972), 44--50.

\bibitem{stacks-project}
The Stacks Project Authors, \textit{Stacks Project},
\url{http://stacks.math.columbia.edu}, retrieved November 2015.

\end{thebibliography}
\end{document}